\newtheorem{Theorem}{Theorem}[section]
\newtheorem{Proposition}[Theorem]{Proposition}
\newtheorem{Lemma}[Theorem]{Lemma}
\newtheorem{Claim}[Theorem]{Claim}
\newtheorem{Subclaim}[Theorem]{Subclaim}
\newtheorem{Definition-Proposition}[Theorem]{Definition-Theorem}
\newtheorem{Main Conjecture}[Theorem]{Main Conjecture}
\newtheorem{Conjecture}[Theorem]{Conjecture}
\theoremstyle{remark}
\newtheorem{Example}[Theorem]{Example}
\newtheorem{Remark}[Theorem]{Remark}
\newcommand\x{{\sf x}}
\newcommand\y{{\sf y}}
\newcommand\z{{\sf z}}
\newcommand\bbb{{\sf b}}
\theoremstyle{plain}
\newcommand{\excise}[1]{}
\newcommand{\cellsize}{19}
\newlength{\cellsz} \setlength{\cellsz}{\cellsize\unitlength}
\newsavebox{\cell}
\sbox{\cell}{\begin{picture}(\cellsize,\cellsize)
\put(0,0){\line(1,0){\cellsize}}
\put(0,0){\line(0,1){\cellsize}}
\put(\cellsize,0){\line(0,1){\cellsize}}
\put(0,\cellsize){\line(1,0){\cellsize}}
\end{picture}}
\newcommand\cellify[1]{\def\thearg{#1}\def\nothing{}%
\ifx\thearg\nothing
\vrule width0pt height\cellsz depth0pt\else
\hbox to 0pt{\usebox{\cell} \hss}\fi%
\vbox to \cellsz{
\vss
\hbox to \cellsz{\hss$#1$\hss}
\vss}}
\newcommand\tableau[1]{\vtop{\let\\\cr
\baselineskip -16000pt \lineskiplimit 16000pt \lineskip 0pt
\ialign{&\cellify{##}\cr#1\crcr}}}
\newcommand{\kellsize}{12}
\newlength{\kellsz} \setlength{\kellsz}{\kellsize\unitlength}
\newsavebox{\kell}
\sbox{\kell}{\begin{picture}(\kellsize,\kellsize)
\put(0,0){\line(1,0){\kellsize}}
\put(0,0){\line(0,1){\kellsize}}
\put(\kellsize,0){\line(0,1){\kellsize}}
\put(0,\kellsize){\line(1,0){\kellsize}}
\end{picture}}
\newcommand\kellify[1]{\def\thearg{#1}\def\nothing{}%
\ifx\thearg\nothing
\vrule width0pt height\kellsz depth0pt\else
\hbox to 0pt{\usebox{\kell} \hss}\fi%
\vbox to \kellsz{
\vss
\hbox to \kellsz{\hss$#1$\hss}
\vss}}
\newcommand\ktableau[1]{\vtop{\let\\\cr
\baselineskip -16000pt \lineskiplimit 16000pt \lineskip 0pt
\ialign{&\kellify{##}\cr#1\crcr}}}
\newcommand{\sellsize}{63}
\newlength{\sellsz} \setlength{\sellsz}{\sellsize\unitlength}
\newsavebox{\sell}
\sbox{\sell}{\begin{picture}(\sellsize,20)
\put(0,0){\line(1,0){\sellsize}}
\put(0,0){\line(0,1){\sellsize}}
\put(\sellsize,0){\line(0,1){\sellsize}}
\put(0,\sellsize){\line(1,0){\sellsize}}
\end{picture}}
\newcommand\sellify[1]{\def\thearg{#1}\def\nothing{}%
\ifx\thearg\nothing
\vrule width0pt height\sellsz depth0pt\else
\hbox to 0pt{\usebox{\sell} \hss}\fi%
\vbox to \sellsz{
\vss
\hbox to \sellsz{\hss$#1$\hss}
\vss}}
\newcommand\stableau[1]{\vtop{\let\\\cr
\baselineskip -16000pt \lineskiplimit 16000pt \lineskip 0pt
\ialign{&\sellify{##}\cr#1\crcr}}}
\begin{document}
\pagestyle{plain}

\mbox{}
\title{Equivariant Schubert calculus and jeu de taquin}
\author{Hugh Thomas}
\address{Department of Mathematics and Statistics, University of New Brunswick, Fredericton, New Brunswick, E3B 5A3, Canada }
\email{hugh@math.unb.ca}

\author{Alexander Yong}
\address{Department of Mathematics, University of Illinois at
Urbana-Champaign, Urbana, IL 61801, USA}

\email{ayong@uiuc.edu}

\date{July 13, 2012}

\maketitle
\begin{abstract}
We introduce edge labeled Young tableaux. Our main results provide a corresponding analogue of
[Sch\"{u}tzenberger '77]'s theory of \emph{jeu de taquin}. These are applied to the
equivariant Schubert calculus of Grassmannians. Reinterpreting, we present new (semi)standard tableaux
to study factorial Schur polynomials, after [Biedenharn-Louck '89],
[Macdonald '92] and [Goulden-Greene '94] and others.

Consequently,
we obtain new combinatorial rules for the Schubert structure coefficients,
complementing
work of [Molev-Sagan '99], [Knutson-Tao '03], [Molev '08] and [Kreiman '09]. We also
describe a conjectural generalization of one of our rules to the equivariant $K$-theory
of Grassmannians, extending work of [Thomas-Yong '07]. This conjecture concretely realizes the
``positivity'' known to exist by [Anderson-Griffeth-Miller '08].
It provides an alternative to the conjectural rule of Knutson-Vakil reported in
[Coskun-Vakil '06].
\end{abstract}


\section{Introduction}

\subsection{Overview} The main goal of this paper is to introduce
edge labeled Young tableaux, together with
a corresponding analogue of the theory of \emph{jeu de taquin}. We apply them to the
setting of equivariant Schubert calculus of Grassmannians.
This paper may also be interpreted as extending
(semi)standard tableaux for use with the closely related family of
factorial Schur polynomials. 

The classical theory of \emph{jeu de taquin}, initiated by M.-P.~Sch\"{u}tzenberger \cite{Schutzenberger}, has been of
significance in combinatorial representation theory. One outcome of this theory is a combinatorial rule for the Littlewood-Richardson
coefficients. Perhaps more importantly, it provides a systematic and flexible means to elegantly reconcile a
variety of important tableau algorithms. It achieves this using a simple sliding law.

The Littlewood-Richardson coefficients compute Schubert calculus of Grassmannians. More precisely, they are structure
coefficients for multiplication with respect to the Schubert basis of the ordinary cohomology ring of Grassmannians. Since a Grassmannian admits the action of the torus $T$ of invertible diagonal matrices, one can instead study the richer $T$-equivariant cohomology ring and its Schubert calculus.
While Littlewood-Richardson rules were already available for this setting \cite{Knutson.Tao}, further ideas
are needed to (provably) extend them to other Lie types or  finer cohomology theories. In addition,
to date, \emph{jeu de taquin} is the only combinatorial model that admits a root-system uniform rule for Schubert calculus on minuscule $G/P$'s  \cite{Thomas.Yong:II}. These are our
principal reasons for seeking new combinatorial models that extend \emph{jeu de taquin}.

\subsection{Schubert calculus of Grassmannians}

Let $X=Gr(k,{\mathbb C}^n)$ denote the Grassmannian of $k$-dimensional planes
in ${\mathbb C}^n$. If $\lambda=(n-k\geq \lambda_1\geq \lambda_2\geq\cdots\geq \lambda_k\geq 0)$ is
a Young diagram contained in the rectangle $\Lambda:=k\times (n-k)$, the associated
{\bf Schubert variety} is defined by
\[X_\lambda:=\left\{V\in Gr(k,{\mathbb C}^n)|\dim(V\cap F^{n-k+i-\lambda_i})\geq i,
1\leq i\leq k\right\},\]
where $F^d={\rm span}(e_{n},e_{n-1},\ldots,e_{n-d+1})$. With this convention,
${\rm codim}(X_{\lambda})=|\lambda|=\sum_{i}\lambda_i$.

Let
$T\subseteq GL_{n}$ be the torus of invertible diagonal matrices.
Since $X_{\lambda}$ is $T$-stable under the action of $T$ on $X$,
$X_{\lambda}$ admits an
{\bf equivariant Schubert class} $\sigma_{\lambda}$ in $H_T(X)=$ the $T$-equivariant cohomology
ring of $X$. Now, $H_T(X)$ is a module over
$H_{T}({\rm pt}):={\mathbb Z}[t_1,\ldots,t_n]$,
and these classes form an
additive $H_T({\rm pt})$-basis of $H_T(X)$. The expansion
\begin{equation}
\label{eqn:eq_expansion}
\sigma_{\lambda}\cdot \sigma_{\mu}=\sum_{\nu}
C_{\lambda,\mu}^{\nu}\sigma_{\nu},
\end{equation}
defines the {\bf equivariant Schubert structure coefficients}
$C_{\lambda,\mu}^{\nu}\in {\mathbb Z}[t_1,\ldots,t_n]$.
In fact, $C_{\lambda,\mu}^{\nu}=0$ unless $|\lambda|+|\mu|\geq |\nu|$. In the case
of equality, $C_{\lambda,\mu}^{\nu}\in {\mathbb N}$ are the {\bf Littlewood-Richardson coefficients}; these compute the number of points of
$X$ in $g_1\cdot X_{\lambda}\cap g_2\cdot X_{\mu}\cap
g_3\cap X_{\nu^{\vee}}$,
where $g_1,g_2,g_3$ are generic elements of $GL_n$ and $\nu^{\vee}$ is the
180-degree rotation of the complement of $\nu$ inside $\Lambda$.

W.~Graham \cite{Graham} proved that the polynomials
$C_{\lambda,\mu}^{\nu}$ have positive coefficients when expressed in the
variables $\beta_i:=t_{i}-t_{i+1}$. This positivity is evident in the statement of
A.~Knutson-T.~Tao's combinatorial \emph{puzzle} rule \cite{Knutson.Tao}.
Later, alternative tableau rules were given by
V.~Kreiman \cite{Kreiman:Eq} and A.~Molev \cite{Molev} (in these rules, the positivity is not
hard to prove). See also the work of P.~Zinn-Justin \cite{Zinn-Justin}.

\subsection{(Semi)standard tableaux with edge labels}

\label{section:eqsyt}
Our work depends on a new kind of Young tableaux.
Let ${\mathbb Y}$ denote the set of Young diagrams (drawn in English notation).
Given $\lambda,\nu\in {\mathbb Y}$ with $\lambda$ contained in $\nu$,
denote the skew shape by $\nu/\lambda$. A {\bf horizontal edge} of $\nu/\lambda$ is a horizontally-oriented line segment which either lies along the upper or
lower boundary of $\nu/\lambda$, or which separates two boxes of $\nu/\lambda$.

An {\bf equivariant filling} of $\nu/\lambda$ assigns one of the labels $1,2,\ldots,\ell$
to each box of $\nu/\lambda$ and a (possibly empty) subset of
$\{1,2,\ldots,\ell\}$ to each horizontal edge of $\nu/\lambda$.
An equivariant filling is {\bf semistandard} if every box label is:
\begin{itemize}
\item weakly smaller than the label in the box immediately to its right;
\item strictly smaller than any label in its southern edge and the label in the
box immediately below it; and
\item strictly larger than any label in its northern edge and the label in the box
immediately above it.
\end{itemize}
(No condition is placed on the labels of adjacent edges.) The filling is {\bf standard} if the labels used are $1, 2, \dots, \ell$, and each label is used
exactly once.

Let ${\tt EqSYT}(\nu/\lambda,\ell)$ and ${\tt EqSSYT}(\nu/\lambda,\ell)$
respectively be the set of equivariant standard and semistandard tableaux whose entries come from
$\{1,2,\ldots,\ell\}$. For example:
\[
\begin{picture}(300,60)
\put(20,40){$\tableau{{\ }&{\ }&{1}&{6 }\\{\ }&{7 }\\{4 }&{8 }}$}
\put(41,37){$3,5$}
\put(64,37){$2$}
\put(105,25){and}
\put(160,40){$\tableau{{\ }&{\ }&{1}&{1 }\\{\ }&{6 }\\{6 }&{7 }}$}
\put(181,37){$3,5$}
\put(200,37){$2,4$}
\put(166,-2){$7$}
\end{picture}
\]
which are in ${\tt EqSYT}((4,2,2)/(2,1),8)$ and ${\tt EqSSYT}((4,2,2)/(2,1),8)$, respectively.

Those $T\in {\tt EqSYT}(\nu/\lambda,|\nu/\lambda|)$, where each
horizontal edge has no labels, are in obvious
bijection with {\bf (ordinary) standard Young tableaux}. In this
latter case, we also call $T$ an ordinary standard tableau. (We will
drop ``equivariant'' for fillings unless confusion might arise.)
Finally, an ordinary standard Young tableau of shape $\mu$ is
{\bf row superstandard} if it is filled by $1,2,\ldots,\mu_1$ in the first
row, $\mu_1+1,\mu_1+2,\ldots,\mu_1+\mu_2$ in the second row, etc. Let
$T_{\mu}$ denote the row superstandard Young tableau of shape $\mu$.

\subsection{Equivariant jeu de taquin (first version)}
Our first version
of equivariant \emph{jeu de taquin} omits some features (and complexity) of the main construction
of Section~2. Nevertheless, this version already suffices to compute the polynomials
$C_{\lambda,\mu}^{\nu}$. Moreover, it suggests generalizations. Specifically,
we present a conjectural generalization to equivariant $K$-theory in Section~4. It also suggests a first step towards an extension to minuscule $G/P$'s (further discussion may appear elsewhere), cf.~\cite{Thomas.Yong:II}.

A box ${\sf x}\in\lambda$ is an {\bf inner corner} of $\nu/\lambda$
if it is maximally southeast in $\lambda$.
Given an inner corner $\x$ and $T\in {\tt EqSYT}(\nu/\lambda,\ell)$,
compare the label
in the box immediately to the right of $\x$ and the smallest
label on the southern edge of $\x$, or the label in the box immediately below
$\x$, if no label appears on that edge. The smaller of the labels
is moved into $\x$, either by vacating a box or
moving a label from the southern edge of $\x$. If no labels can be used or if
an edge label is moved, the process terminates. Otherwise, some
adjacent box has been vacated, and we repeat the above process until
termination. Call the result ${\tt Ejdt}_{\x}(T)$, the
{\bf equivariant jeu de taquin slide} into $\x$. Clearly,
${\tt Ejdt}_{\x}(T)$ is also a standard tableau.

Define \emph{the} equivariant rectification of $T$, denoted ${\tt Erect}(T)$,
to be the result of applying  the sequence
${\tt Ejdt}_{\x(1)}, {\tt Ejdt}_{\x(2)},\ldots, {\tt Ejdt}_{\x(|\lambda|)}$
starting with $T$, where $\x(1),$ $\x(2),\ldots, \x(|\lambda|)$ are the boxes
of $\lambda$, read along columns, from bottom to top, and
right to left.
\excise{By analogy with the classical jeu de taquin setting,
it would be fair to call this the {\bf column rectification order}. We will
make reference to the intermediate {\bf rectification of a column} $c$,
which concerns those ${\tt Eqjdt}_{x(i)}$ where $x(i)$ lies in column $c$.}

\begin{Example}
\label{exa:1.1}
Let $\nu/\lambda=(4,3,1)/(3,1,1)\subseteq \Lambda=3\times 4$ and
\[\begin{picture}(100,50)
\put(0,35){$T=\tableau{{\ }&{\ }&{ \ }&{3}\\{\ }&{5}&{6}\\{\ }}$}
\put(32,-7){$4$}
\put(49,31){$1$}
\put(69,31){$2$}
\end{picture}\]
We use ``$\bullet$'' to indicate the boxes being slid into during the steps of
${\tt Erect}(T)$. The rectification of the third column given by:
\begin{equation}
\label{eqn:ex1.1.1}
\begin{picture}(150,50)
\put(0,35){$\tableau{{\ }&{\ }&{ \bullet }&{3}\\{\ }&{5}&{6}\\{\ }}$}
\put(110,35){$\tableau{{\ }&{\ }&{2}&{3}\\{\ }&{5}&{6}\\{\ }}$}
\put(8,-7){$4$}
\put(25,31){$1$}
\put(45,31){$2$}
\put(118,-7){$4$}
\put(135,31){$1$}
\put(90,35){$\mapsto$}
\end{picture}
\end{equation}
The rectification of the second column given by:
\begin{equation}
\label{eqn:ex1.1.2}
\begin{picture}(150,50)
\put(0,35){$\tableau{{\ }&{\bullet }&{ 2}&{3}\\{\ }&{5}&{6}\\{\ }}$}
\put(110,35){$\tableau{{\ }&{1 }&{2}&{3}\\{\ }&{5}&{6}\\{\ }}$}
\put(8,-7){$4$}
\put(25,31){$1$}
\put(118,-7){$4$}
\put(90,35){$\mapsto$}
\end{picture}
\end{equation}
and finally the rectification of the first column given by:
\begin{equation}
\label{eqn:ex1.1.3}
\begin{picture}(400,50)
\put(0,35){$\tableau{{\ }&{1 }&{ 2}&{3}\\{\ }&{5}&{6}\\{\bullet }}$}
\put(110,35){$\tableau{{\ }&{1 }&{2}&{3}\\{\bullet }&{5}&{6}\\{4 }}$}
\put(220,35){$\tableau{{\bullet }&{1 }&{2}&{3}\\{4 }&{5}&{6}\\{\ }}$}
\put(350,35){$\tableau{{1 }&{2 }&{3}&{\bullet }\\{4 }&{5}&{6}\\{\ }}$}
\put(6,-7){$4$}
\put(90,35){$\mapsto$}
\put(195,35){$\mapsto$}
\put(301,35){$\mapsto\cdots\mapsto$}
\end{picture}
\end{equation}
the last tableau being $T_{(3,3)}$. Here the ``$\mapsto\cdots\mapsto$'' refers to slides moving the
$\bullet$ right in the first row.\qed
\end{Example}

We now define the {\bf weight}
${\tt wt}(T)\in {\mathbb Z}[t_1,\ldots,t_n]$
of a standard tableau $T$.
Each
box $\x\in\Lambda$ is assigned
a weight $\beta(\x)=t_m-t_{m+1}$ where $m$ is the ``Manhattan distance''
from the southwest corner (point) of $\Lambda$ to the northwest corner (point)
of $\x$ (i.e., the length of any north and east lattice path between the corners);
see Example~\ref{exa:Manh}. We say an edge label ${\mathfrak l}$ {\bf passes} through a box
$\x$ if it occupies $\x$ during the equivariant rectification of the column
of $T$ in which
${\mathfrak l}$ begins. Suppose that the boxes passed are
$\x_1,\x_2,\ldots,\x_s$. Moreover, once the rectification of a column is
complete, suppose the filled boxes strictly to the right of the box $\x_s$ are
$\y_1,\ldots,\y_t$. Then set
\[{\tt factor}({\mathfrak l})=(\beta(\x_1)+\beta(\x_2)+\cdots+\beta(\x_s))+
(\beta(\y_1)+\beta(\y_2)+\cdots+\beta(\y_t)).\]
If after rectification of a column, the label ${\mathfrak l}$
still remains an edge label, ${\tt factor}({\mathfrak l})$ is declared to be zero.
Otherwise, note that since the boxes $\x_1,\ldots,\x_s, \y_1,\ldots, \y_t$
form a hook inside $\nu$, ${\tt factor}(i)=t_e-t_f$ with $e<f$. Now define
\[{\tt wt}(T):=\prod_{{\mathfrak l}} {\tt factor}({\mathfrak l}),\]
where the product is over all edge labels ${\mathfrak l}$ of $T$.

\begin{Theorem}
\label{thm:main}
The equivariant Schubert structure coefficient is given by the polynomial
\[C_{\lambda,\mu}^{\nu}=\sum_T {\tt wt}(T),\]
where the sum is over all $T\in {\tt EqSYT}(\nu/\lambda,|\mu|)$ such that
${\tt Erect}(T)=T_{\mu}$.
\end{Theorem}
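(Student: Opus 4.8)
The plan is to establish this formula by comparing the combinatorial rule on the right with a known rule for the equivariant structure coefficients, rather than by a direct geometric argument. The most promising route is to match the right-hand side against the Molev--Sagan or Knutson--Tao puzzle rule (or, more conveniently, against the Kreiman/Molev tableau rules cited in the introduction). First I would set up the key algebraic foundation: interpret the Schubert classes $\sigma_\lambda$, restricted to fixed points, as \emph{factorial Schur polynomials} $s_\lambda(x \,|\, t)$, using the well-known identification of $H_T(X)$ with a quotient of a polynomial ring on which the $\sigma_\lambda$ act as factorial Schur functions. Under this identification, proving the theorem reduces to showing that $\sum_T {\tt wt}(T)$, summed over $T \in {\tt EqSYT}(\nu/\lambda, |\mu|)$ with ${\tt Erect}(T) = T_\mu$, equals the structure constant in the expansion $s_\lambda \cdot s_\mu = \sum_\nu C_{\lambda,\mu}^\nu s_\nu$ of the factorial Schur functions.

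The core of the argument is then a jeu-de-taquin-style "rectification is well-defined and weight-preserving" statement, parallel to Sch\"utzenberger's classical theory. The key steps, in order, are: (i) show that equivariant rectification ${\tt Erect}$ is confluent --- the final tableau $T_\mu$ (and, crucially, the accumulated weight ${\tt wt}(T)$) does not depend on the order in which inner corners are filled, beyond the column order prescribed; this is the analogue of the classical confluence of jeu de taquin and is likely proved by a local "diamond lemma" / commutation-of-distant-slides argument together with a careful analysis of how edge labels move and how ${\tt factor}(\mathfrak{l})$ is computed incrementally. (ii) Using confluence, set up a weight-preserving bijection between edge-labeled tableaux rectifying to $T_\mu$ and the combinatorial objects (e.g. Molev's or Kreiman's tableaux, or Knutson--Tao puzzles) that the established rule counts; here the $\beta(\mathsf{x}) = t_m - t_{m+1}$ weights and the hook-shaped paths traced by edge labels should correspond exactly to the $\beta_i$-monomials appearing in those rules. (iii) Alternatively, if a direct bijection is awkward, prove the formula by showing both sides satisfy the same recursion: the equivariant Pieri/Monk rule plus the characterization of the $C_{\lambda,\mu}^\nu$ as the unique family of polynomials satisfying that recursion with the correct initial conditions, checking that $\sum_T {\tt wt}(T)$ obeys the same recursion by a "last slide" analysis.

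I expect the main obstacle to be step (i): proving that equivariant jeu de taquin slides commute appropriately and that rectification is genuinely order-independent, \emph{including} the bookkeeping of edge-label weights. In the classical setting the sliding law is local and the confluence proof is a short diamond argument, but here the edge labels interact with box labels asymmetrically (strict below/above, no condition on adjacent edges), and the weight ${\tt factor}(\mathfrak{l})$ depends on a global hook (the boxes passed plus the boxes to the right after a column finishes), so one must verify that this global quantity is insensitive to the allowed reorderings. I would handle this by (a) first treating the case $\lambda = \emptyset$ or very small skew shapes to calibrate definitions, (b) proving a single-edge-label lemma tracking one label through the whole rectification and identifying its hook intrinsically (independent of the slide order), and (c) then arguing that multiple edge labels do not interfere, reducing the multi-label weight to a product of single-label contributions. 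Once confluence and weight-invariance are in hand, matching to an existing rule (step (ii)) and hence deriving $C_{\lambda,\mu}^\nu$ should be comparatively routine, since the $\beta$-positivity and the shape of the monomials already align with Graham's positivity and the Knutson--Tao formula.
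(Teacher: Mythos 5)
Your high-level architecture is close to the paper's: the paper does prove the result by the recursion route you list as alternative (iii) --- the Pieri-rule recurrence $\sum_{\lambda^+}C_{\lambda^+,\mu}^{\nu}=C_{\lambda,\mu}^{\nu}\,{\tt wt}(\nu/\lambda)+\sum_{\nu^-}C_{\lambda,\mu}^{\nu^-}$ (Lemma~\ref{lemma:therecC}) plus uniqueness, with the base case $D_{\lambda,\mu}^{\lambda}=C_{\lambda,\mu}^{\lambda}$ handled exactly as you suggest via factorial Schur polynomials / localization of double Schubert polynomials (Proposition~\ref{prop:DequalsCspecial}). Your route (ii), bijecting with Molev/Kreiman/puzzles, is explicitly set aside by the authors, who want a proof independent of earlier rules.

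The genuine gap is in your step (i), and it propagates into your ``last slide'' analysis. The algorithm ${\tt Ejdt}/{\tt Erect}$ of Theorem~\ref{thm:main} has a \emph{fixed} column rectification order, so there is no confluence statement to prove for it; and the weight ${\tt wt}(T)$ is defined relative to that order (the hook of boxes an edge label passes plus the boxes to its right ``once the rectification of a column is complete''). The recursion argument, however, requires performing a single slide into the box $\lambda^+/\lambda$ --- an arbitrary inner corner --- and tracking how the weight changes; this slide does not respect the column order, so a diamond-lemma argument on the rigid algorithm cannot even be set up. The paper's essential missing ingredient is a \emph{different, more flexible} algorithm ${\tt Eqjdt}$ (Section~2), operating on semistandard edge-labeled tableaux via four swap types and producing \emph{formal sums} of tableaux with coefficients in ${\mathbb Z}[\beta_1,\ldots,\beta_{n-1}]$ (the expansion swap (II) branches, emitting a weight $\beta(\x)$). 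For this algorithm the authors prove order-independence of the coefficient of $S_{\mu}$ and identify it with an \emph{a priori} weight ${\tt apwt}(T)$ computable from $T$ alone (Theorem~\ref{thm:AB}); the recursion for $D_{\lambda,\mu}^{\nu}=\sum_T{\tt apwt}(T)$ is then verified by constructing a \emph{reverse} ${\tt Eqjdt}$ and proving uniqueness of the preimage (Claims~\ref{claim:good}--\ref{claim:isnearlybad}). Finally, Theorem~\ref{thm:main} itself is deduced by a standardization bijection $\Phi$ matching ${\tt wt}$ on standard edge-labeled tableaux under ${\tt Erect}$ with ${\tt apwt}$ on lattice semistandard tableaux. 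Without the flexible algorithm and its order-independence theorem, neither your confluence step nor your recursion check can be carried out.
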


Since each ${\tt factor}({\mathfrak l})$ is a positive sum of the
indeterminates $\beta_i=t_{i}-t_{i+1}$, Theorem~\ref{thm:main}
expresses $C_{\lambda,\mu}^{\nu}$ as a polynomial with positive
coefficients in the $\beta_i$'s.
It is not hard to see that Theorem~\ref{thm:main} expresses $C_{\lambda,\mu}^{\nu}$ as
a squarefree polynomial in the ``positive root'' variables $\alpha_{ij}=\beta_i+\beta_{i+1}+\cdots +\beta_{j-1}$, also a feature of the puzzle rule of \cite{Knutson.Tao}.

\begin{Example}
\label{exa:Manh}
Continuing Example~\ref{exa:1.1},
the Manhattan distances for $\Lambda=3\times 5$ are:
\[\ktableau{{3}&{4}&{5}&{6}&{7}\\{2}&{3}&{4}&{5}&{6}\\{1}&{2}&{3}&{4}&{5}}.\]
There are three edge labels of $T$:
\begin{itemize}
\item For the edge label $2$, we have
${\tt factor}(2)=(t_5-t_6)+(t_6-t_7)=t_5-t_7$ since the edge label passes
through one box, and after the third column is rectified (\ref{eqn:ex1.1.1}),
the $\ktableau{3}$
lies to its right.
\item For the edge label $1$, we have
${\tt factor}(1)=(t_4-t_5)+(t_5-t_6)+(t_6-t_7)=t_4-t_7$ since the edge label passes
through one box, and after the second column is rectified (\ref{eqn:ex1.1.2}),
the $\ktableau{{2}&{3}}$
lies to its right.
\item For the edge label $4$, we have
${\tt factor}(4)=(t_1-t_2)+(t_2-t_3)+(t_3-t_4)+(t_4-t_5)=t_1-t_5$ since the edge label passes
through two boxes, and after the first column is rectified (\ref{eqn:ex1.1.3}),
$\ktableau{{5}&{6}}$ lies to its right.
\end{itemize}
Therefore, ${\tt wt}(T)=(t_5-t_7)(t_4-t_7)(t_1-t_5)$.\qed
\end{Example}

In Sch\"{u}tzenberger's \emph{jeu de taquin} theory, one is free to slide at
different inner corners. His theory's ``first fundamental theorem'' is that rectification does not depend on these choices.
The above equivariant \emph{jeu de taquin} avoids this issue altogether by insisting on a specific
order of rectification.
Even more, the classical theory's ``second fundamental theorem'' asserts the number of tableaux that rectify to a given target tableau is independent of the
choice of target tableau. In contrast, we insist on using row superstandard
tableaux as our targets.

The above rigid definition of \emph{jeu de taquin} makes nonobvious to us how to directly prove
Theorem~\ref{thm:main}. Although one can biject the rule of Theorem~\ref{thm:main}
with earlier rules, our original reason for starting this project was to find a model that could ultimately extend to other equivariant contexts where earlier rules are unavailable.
\excise{Specifically, we have in mind that situation where $G/P$ is (co)minuscule, or, as treated conjecturally in this paper when we work in equivariant $K$-theory.}

Therefore, our problem was to find a more flexible
version of equivariant \emph{jeu de taquin} possessing features of the fundamental theorems.
Our solution is described in Section~2. It has some aspects that are distinctly different
than the classical \emph{jeu de taquin} (and our first version of equivariant \emph{jeu de taquin}):
\begin{itemize}
\item More than one label can move during a swap.
\item Labels can move downwards during a swap.
\item Row semistandardness can be violated after a swap (although at
most one such violation occurs at any given time, and it is
eliminated at the end of a sequence of swaps that defines a slide).
\end{itemize}

 Our main result shows that the order of rectification is independent of the choices, if one  rectifies to a ``highest weight tableau'' and starts with a
tableau that is ``lattice''. From this, we derive an essentially independent proof of Theorem~\ref{thm:main}.

\subsection{Organization}
In Section~2, we describe our flexible version of \emph{jeu de taquin} as well as stating and proving
our main results. Section~3 uses the results of Section~2 to give
two additional formulations of the equivariant Littlewood-Richardson rule. We then
deduce Theorem~\ref{thm:main}. In Section~4, we formulate a conjectural formula for equivariant $K$-theory of Grassmannians. Concluding remarks are given in Section~5.

\section{Equivariant jeu de taquin (flexible version)}

To describe our flexible version of
equivariant \emph{jeu de taquin}, it is more convenient
to work with  semistandard fillings than with standard
fillings.

Starting with a semistandard filling $T$ of a skew shape
$\nu/\lambda$, choose an inner corner $\x$ and mark it with a $\bullet$.
We now define the
equivariant slide of $T$ into $\x$.
As in classical \emph{jeu de taquin},
the slide proceeds by a sequence of swaps, as the $\bullet$ moves through the
tableau.

However, the result of a slide is not necessarily
a single tableau, but rather a formal sum of tableaux, with coefficients
in $\mathbb Z[\beta_1,\dots,\beta_{n-1}]$, where $\beta_i=t_i-t_{i+1}$.
The way this arises in the course of the sequence of swaps is that sometimes
a swap will produce two tableaux.  One of them has no $\bullet$, and it
contributes directly to the output (with a coefficient),
while the other still has a $\bullet$, which we continue to swap.

\subsection{Definitions of the equivariant swaps} Let $\x\in T$ be as above.
Suppose $\y$ is the box to the immediate right of $\x$, and $\z$ is the box
immediately below $\x$. Let ${\mathfrak b}$ be the smallest neighbouring label below $\x$
(either the smallest one on the lower edge of $\x$ or the one in the box $\z$)
and let ${\mathfrak r}$ be
the label in $\y$.
\excise{If ${\mathfrak b}={\mathfrak r}$
we borrow the usual convention (from the classical semistandard theory) that ${\mathfrak b}$
takes precedence over ${\mathfrak r}$ for the purposes of our sliding rules below.}
Define ${\mathcal N}_{\x,{\mathfrak l}}^T$ to be the number of
occurences of a label ${\mathfrak l}$ in columns weakly to the right of the box $\x$ in $T$.

There are four kinds of swaps (I)--(IV) that we use:

\noindent
\emph{(I) ``vertical swap'': ${\mathfrak b}\leq {\mathfrak r}$ (or there is no ${\mathfrak r}$)
and ${\mathfrak b}$ is a box label of $\z$}: $T'$ is obtained by exchanging ${\bullet}$ and ${\mathfrak b}$, i.e.,
$T=\tableau{{\bullet}&{\mathfrak r}\\{\mathfrak b}}\mapsto \tableau{{\mathfrak b}&{\mathfrak r}\\{\bullet}}=T'$.

Output: $T'$.

\noindent
\emph{(II) ``expansion swap'': ${\mathfrak b}\leq {\mathfrak r}$
and ${\mathfrak b}$ is a label of the lower edge of $\x$}: $T'$ is obtained by moving ${\mathfrak b}$ into $\x$; the ${\bullet}$ is
eliminated. $T''$ is obtained by moving ${\mathfrak b}$ to the top edge of $\x$ (and ${\bullet}$ remains in place). In this case,
\[\begin{picture}(150,15)
\put(0,0){$\tableau{{\bullet}}\mapsto \mbox{$\beta(\x)$}\cdot\tableau{{\mathfrak b}}+\tableau{{\bullet}}=\beta(\x)\cdot T'+T''$}
\put(7,-3){$\mathfrak b$}
\put(108,14){$\mathfrak b$}
\end{picture}
\]

Output: \ $\mbox{$\beta(\x)$}\cdot T'+T''$.

\noindent
\emph{(III) ``resuscitation swap'': ${\mathfrak b}> {\mathfrak r}$ (or there is no ${\mathfrak b}$), and the largest label ${\mathfrak u}$ on the upper edge of $\x$
satisfies ${\mathfrak u}={\mathfrak r}$:}
In this case, $T'$ is obtained by having ${\mathfrak u}={\mathfrak r}$
replace the ${\bullet}$ in $\x$,
replace ${\mathfrak r}$ by $\bullet$ in $\y$, and placing
${\mathfrak r}$ on the lower edge of $\y$. This move locally looks like:
\[\begin{picture}(170,18)
\put(0,0){$T=\tableau{{\bullet}&{\mathfrak r}}\mapsto
\tableau{{\mathfrak r}&{\bullet}}=T'$}
\put(31,16){$\mathfrak r$}
\put(107,-3){$\mathfrak r$}
\end{picture}
\]

Output: $T'$.

\noindent
\emph{(IV) ``horizontal swap'': ${\mathfrak b}>{\mathfrak r}$ (or there is no ${\mathfrak b}$),
and (III) does not apply}:
Define $Z$ to be the set of consecutive integers
$\{\mathfrak r, \mathfrak r+1,\dots, {\mathfrak m}\}$ where ${\mathfrak m}$ is
chosen largest so that:
\begin{itemize}
\item[(i)] ${\mathfrak m}<\mathfrak b$ and ${\mathfrak m}$ is at least as large as
the entry in the box to the left of $\x$;
\item[(ii)] ${\mathcal N}_{\y,{\mathfrak l}}^T= {\mathcal N}_{\y,{\mathfrak r}}^T$ for
all $\mathfrak r \leq \mathfrak l \leq {\mathfrak m}$.
\item[(iii)] $\{\mathfrak r+1,\dots,{\mathfrak m}\}$ are labels on the lower
edge of $\y$.
\end{itemize}

Set, $Z'=Z\setminus\{{\mathfrak m}\}$, $W=U\cup Z'$ and $Y'=Y\setminus Z$. Then locally the swap
is:
\begin{equation}
\label{eqn:IV}
\begin{picture}(100,18)
\put(0,0){$T=\tableau{{\bullet}&{\mathfrak r}}\mapsto
\tableau{{{\mathfrak m}}&{\bullet}}=T'$}
\put(31,15){$U$}
\put(31,-4){$\mathfrak b$}
\put(47,-4){$Y$}
\put(87,-4){$\mathfrak b$}
\put(105,-4){$Y'$}
\put(84,14){$W$}
\end{picture}
\end{equation}

That is, $T'$ is the result of moving ${\mathfrak m}$
into $\x$ and putting the smaller entries of $Z$ in the upper edge of $\x$. Conclude by placing
$\bullet$ into $\y$.

Output: $T'$.
\begin{Example}[of swap (IV)]\label{exa:IVswap} We have
\[\begin{picture}(100,15)
\put(50,0){$\tableau{{\ }&{\ }}$}
\put(57,6){$\bullet$}
\put(77,6){$1$}
\put(57,-4){$3$}
\put(71,-4){$2,3$}

\put(100,3){$\mapsto$}
\put(120,0){$\tableau{{\ }&{\ }}$}
\put(127,16){$1$}
\put(127,6){$2$}
\put(147,6){$\bullet$}
\put(127,-4){$3$}
\put(147,-4){$3$}
\end{picture}\]
where $Z=\{1,2\}$.

On the other hand:
\[\begin{picture}(100,30)
\put(0,18){$\tableau{{\ }&{\ }&{1}\\{\bullet }&{1}}\mapsto
\tableau{{\ }&{\ }&{1}\\{1}&{\bullet}}$}
\put(26,-6){$2$}
\put(101,-6){$2$}
\end{picture}\]
where the edge label ``$2$'' is not in $Z$ because of (IV)(ii).

In addition, the following swap (IV) \emph{is} valid, even though it ``breaks'' row semistandardness
in the ``obvious'' sense:
\[\begin{picture}(100,30)
\put(0,15){$\tableau{{\ }&{\ }&{\ }\\{\bullet }&{1}&{1 }}\mapsto
\tableau{{\ }&{\ }&{\ }\\{2}&{\bullet}&{1 }}$}
\put(26,-9){$2$}
\put(45,-9){$2$}
\put(82,12){$1$}
\put(121,-9){$2$}
\end{picture}\]
Note that the next swap will also be of type (IV), ``fixing'' the broken semistandardness
in the second row. Claims~\ref{claim:cannotbe} and \ref{claim:IVispossible} below explain how this example generalizes.
\qed
\end{Example}

We now describe ${\tt Eqjdt}_{\x}(T)$ (as opposed to the ``${\tt Ejdt}_{\x}(T)$'' of Section~1).
Begin by replacing $T$ by the result
of swapping at $\x$. The result is a formal sum of terms of the form $\omega \cdot S$
where $\omega\in {\mathbb Z}[\beta_1,\ldots,\beta_{n-1}]$, and $S$
is a tableau.
If a tableau $U$ in this formal sum either has no $\bullet$, or the $\bullet$ has no
neighbouring labels southeast, then do nothing. Otherwise, let $\x'$ be the box containing the $\bullet$
of $U$ and replace $U$ by swapping at $\x'$. Repeat until no more tableaux
need replacement. Now erase all any $\bullet$'s from the tableaux in the formal
sum. We need to show (under assumptions) that ${\tt Eqjdt}_{\x}(T)$ is a well-defined algorithm.

Call a tableau $T$ with at most a single $\bullet$
{\bf really good} if:
\begin{itemize}
\item[(a)] it is semistandard, once one ignores the $\bullet$ (i.e.,
the rows are weakly increasing and the columns are strictly increasing);
\item[(b)] the label of the box directly left of the box with the $\bullet$
is weakly less than the smallest label on the edge below the $\bullet$ (if the latter
label exists), i.e.,
\[\begin{picture}(200,15)
\put(50,0){$\tableau{{\ell }&{\bullet }}$}
\put(75,-4){$b$}
\put(100,5){$(\ell\leq b)$;}
\end{picture}\]
\item[(c)] the label of the box directly right of the box with the bullet is weakly larger
than the largest label on the edge above the $\bullet$ (if the latter exists), i.e.,
\[\begin{picture}(200,15)
\put(50,0){$\tableau{{\bullet }&{r }}$}
\put(55,16){$u$}
\put(100,5){$(u\leq r)$.}
\end{picture}\]
\end{itemize}
(Note that the latter two conditions would be automatic if the $\bullet$ were a numerical
label.) Call $T$ {\bf nearly bad} if (b) and (c) above hold, and (a) holds except
that the label to the immediate left of the $\bullet$ may be larger than the label
to the immediate right of $\bullet$. We will say $T$ is {\bf good} if it is either really good or nearly bad; otherwise $T$ is {\bf bad}.

\excise{
\begin{Example} The first three tableaux below are good:

$\ \ \ \ \ \ \ \ \ \ \ \ \ \ \ \ \ \begin{picture}(200,15)
\put(50,0){$\tableau{{\ }&{\ }}$}
\put(57,6){$\bullet$}
\put(77,6){$1$}
\put(57,-4){$1$}

\put(100,0){$\tableau{{\ }&{\ }}$}
\put(107,6){$\bullet$}
\put(127,6){$1$}
\put(107,14){$1$}

\put(150,0){$\tableau{{\ }&{\ }}$}
\put(157,6){$1$}
\put(177,6){$\bullet$}
\put(177,-4){$1$}

\put(200,5){but not:}

\put(250,0){$\tableau{{\ }&{\ }}$}
\put(257,6){$2$}
\put(277,6){$\bullet$}
\put(277,-4){$1$}

\put(300,0){$\tableau{{\ }&{\ }}$}
\put(307,6){$\bullet$}
\put(327,6){$1$}
\put(307,14){$2$}

\end{picture}$
\qed
\end{Example}}

The third swap in Example~\ref{exa:IVswap} demonstrates that swap (IV) can turn a really
good tableau to a nearly bad one. In fact, in Section~\ref{sec:proofthmab} we see only swap (IV) can cause near badness.

\subsection{Statement of the main results}
An equivariant filling $T$
is {\bf lattice} if for a given column $c$ and label ${\mathfrak l}$ (that may not be in column $c$),
the number of occurrences of ${\mathfrak l}$ in columns weakly to the right of column $c$ weakly exceeds
the occurences of ${\mathfrak l}+1$ in that region.

The appropriate class of tableaux to apply our ${\tt Eqjdt}$ swaps to are the lattice and
semistandard tableaux, in the sense that ${\tt Eqjdt}$ preserves this class:

\begin{Proposition}
\label{prop:semistandard}
Suppose $T$ is semistandard and lattice, and that $\x$ is an inner corner.
Then ${\tt Eqjdt}_{\x}(T)$ is well-defined as an algorithm: it
terminates in a finite number of steps, and outputs a
formal sum of semistandard and lattice tableaux. Each intermediate tableau in the calculation of
${\tt Eqjdt}_{\x}(T)$ is good and lattice.
\end{Proposition}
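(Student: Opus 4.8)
The plan is to induct on the sequence of swaps comprising the computation of ${\tt Eqjdt}_{\x}(T)$, carrying the invariant that every tableau appearing is good and lattice, and that every $\bullet$-free tableau among the terms produced is in fact semistandard. The base case is the hypothesis on $T$. For the inductive step one takes the current good, lattice tableau $U$ whose $\bullet$ sits at a box $\x'$ with a southeast neighbour and examines each of the four swap types.

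Swaps (I), (II), (III) are direct verifications. In each of them every label that moves stays in its own column --- in (I) a box label moves up one box; in (II) $\mathfrak b$ moves within the column of $\x'$ in both outputs; in (III) $\mathfrak u=\mathfrak r$ and $\mathfrak r$ each move within their columns --- so all column multiplicities are unchanged and the lattice condition persists automatically. That the resulting tableaux are good (indeed really good) is read off from conditions (a)--(c) for $U$ together with the inequality defining the swap; for instance in (II) the $\bullet$-free output is semistandard because $\ell\le\mathfrak b\le\mathfrak r$ and $\mathfrak b$ exceeds every label on the upper edge of $\x'$ and is below every other label on its lower edge, while the $\bullet$-retaining output still satisfies (c) since $\mathfrak b\le\mathfrak r$ dominates the labels placed on the upper edge.

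The substance is swap (IV), where the block $Z=\{\mathfrak r,\dots,\mathfrak m\}$ genuinely migrates one column to the left, out of the column of $\y$ into that of $\x'$. Three things must be checked. \emph{Latticeness.} The only column whose multiplicities change is that of $\y$, where the count of each $\mathfrak l\in Z$ drops by one; for $\mathfrak l<\mathfrak m$ this is compensated since $\mathfrak l+1\in Z$ too, and for $\mathfrak l=\mathfrak m$ one combines condition (ii) (all counts ${\mathcal N}^U_{\y,\mathfrak l}$, $\mathfrak r\le\mathfrak l\le\mathfrak m$, are equal) with latticeness of $U$ and the maximality of $\mathfrak m$ to see that $\mathfrak m+1$ occurs strictly fewer times than $\mathfrak m$ weakly to the right of $\y$ --- which is exactly why (ii) and maximality are imposed. \emph{Goodness.} Since (iii) puts $\{\mathfrak r+1,\dots,\mathfrak m\}$ on the lower edge of $\y$, after the swap the lower edge of the new $\bullet$ holds only labels $\ge\mathfrak m+1$, so (b) holds with $\mathfrak m$ now immediately left of the $\bullet$; conditions (c) and the column conditions are inherited from $U$ (using that (III) did not apply, so every label on the upper edge of $\x'$ is $<\mathfrak r\le\mathfrak m$); the only possible failure is weak increase across the $\bullet$, i.e. $\mathfrak m$ exceeding the box label just right of $\y$, which is precisely the \emph{nearly bad} case, not bad. \emph{Repair of near badness.} If the output is nearly bad, one must show that the next swap (again at $\y$) is a legal swap of type (IV): that (III) does not apply, that $\mathfrak b>\mathfrak r$ there (the labels now below the $\bullet$ being $\ge\mathfrak m+1$), and, crucially, that a valid maximal choice exists meeting (i)--(iii), in particular that the required labels sit on the next lower edge. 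Iterating, a nearly bad configuration propagates one box rightward at a time and is cleared before the $\bullet$ reaches the boundary, so every $\bullet$-free tableau in the final formal sum is genuinely semistandard.

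I expect the last item --- controlling the cascade of nearly bad configurations and showing a legal type-(IV) swap is always available to absorb it --- to be the main obstacle; it requires tracking exactly which edge labels the preceding swaps deposit, and is what the bookkeeping in Claims~\ref{claim:cannotbe} and \ref{claim:IVispossible} is designed to supply. Termination is then easy: across a swap the $\bullet$ moves strictly down (I), strictly right (III, IV), or is removed (one output of (II)); the sole exception is the $\bullet$-retaining output of (II), where the $\bullet$ stays put but the lower edge of its box loses a label. That edge being finite, only finitely many swaps (II) can occur at a fixed box before the $\bullet$ must move, and the $\bullet$'s position is weakly monotone to the southeast, so it visits only finitely many boxes; hence ${\tt Eqjdt}_{\x}(T)$ halts and is a well-defined formal sum of semistandard, lattice tableaux.
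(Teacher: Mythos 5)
Your outline reproduces the paper's proof structure essentially verbatim: induction along the swap sequence with the invariant ``good and lattice''; swaps (I)--(III) dismissed because they preserve the set of labels in each column (so latticeness is automatic) and goodness is a local check; all difficulty concentrated in swap (IV) and the resulting ``nearly bad'' cascade; and the same termination argument (the $\bullet$ moves weakly southeast, and the $\bullet$-retaining branch of (II) strictly shrinks the lower edge). So the approach is the right one and matches the paper.

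The gap is that the step you yourself flag as ``the main obstacle'' is exactly where the paper's proof has content, and your proposal does not supply it but instead points at Claims~\ref{claim:cannotbe} and~\ref{claim:IVispossible}. Claim~\ref{claim:cannotbe} is indeed routine ($\min Y'\geq {\mathfrak m}+1>\overline{\mathfrak r}$ kills (I) and (II), and $\max\overline U<{\mathfrak r}\leq\overline{\mathfrak r}$ kills (III)), but Claim~\ref{claim:IVispossible} is not: one must show that the maximal interval $\overline Z=\{\overline{\mathfrak r},\ldots,\overline{\mathfrak m}\}$ satisfying (IV)(i)--(iii) at the new position reaches at least ${\mathfrak m}$, i.e.\ $\overline{\mathfrak m}\geq{\mathfrak m}$; otherwise the next output is genuinely bad (not merely nearly bad) and the induction collapses. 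The paper proves this by contradiction, combining the count equalities forced by (IV)(ii) at the previous step with latticeness of $S^{-}$, ruling out that $\overline{\mathfrak m}+1$ sits in a box of column $4$ via a goodness argument, and then deriving a contradiction from a content/lattice count on the subtableau weakly right of column $4$. None of that is present or even sketched in your proposal. A smaller compression of the same kind occurs in your latticeness check for (IV): showing the count of ${\mathfrak m}+1$ does not overtake that of ${\mathfrak m}$ in the column of $\y$ requires separately excluding that ${\mathfrak m}+1$ is a box label immediately below $\y$, and then splitting on whether ${\mathfrak m}+1$ was excluded from $Z$ by (IV)(i) or by (IV)(ii); ``maximality of ${\mathfrak m}$'' alone does not decide this. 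As written, then, the proposal is a correct map of the paper's argument with its central lemma left unproved.
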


Assuming this proposition (the proof being delayed until Section~2.3), we define (an) {\bf equivariant rectification}. Given $T$, pick an inner corner $\x$
and replace $T$ by the formal sum ${\tt Eqjdt}_{\x}(T)$. Now, for each $U$ appearing in
${\tt Eqjdt}_{\x}(T)$, which has an inner corner $\x'$, replace $U$ by ${\tt Eqjdt}_{\x'}(U)$. Repeat until no such $U$ exists.
Let ${\tt Eqrect}(T)$
be the resulting formal sum of equivariant semistandard tableaux. We will call the choices of
$\x$ and of each $\x'$ the {\bf rectification order}.

    Call a straight shape tableau {\bf regular} if
does not have any edge labels; it is {\bf irregular} otherwise.
The regular tableau $S_{\mu}$ whose $i$-th row uses only the labels $i$ is called a {\bf highest
weight tableau}. The {\bf content} of a tableau $T$ is $\mu=(\mu_1,\mu_2,\dots)$ if $T$
has $\mu_1$ 1's, $\mu_2$ 2's, etc.  For $T$ of content $\mu$, ${\tt Eqrect}(T)$ is {\bf $\mu-$highest weight} if $S_{\mu}$ is
the only regular tableau that appears.  (We allow the possibility that
no regular tableau appears at all.)

Let us also define the {\bf a priori weight} of a good and lattice tableau $T$, denoted by
${\tt apwt}(T)$. Declare ${\tt apwt}(T)=0$ if:
\begin{itemize}
\item[(i)] there is an
edge label $i$ weakly above the upper edge of the box $\x$ in row $i$ (in its column), and it is not possible
to apply a resuscitation swap (III) to $T$ such that $i$ moves into $\x$; or
\item[(ii)] there is a box label $i$ located strictly higher than row $i$.
\end{itemize}
We will say that a label satisfying (i) or (ii) is {\bf too high}.  It will also be convenient to say that a label $i$ is {\bf nearly too high} if it lies on the upper edge
of a box in row $i$ but is not too high (i.e., a resuscitation swap (III) applies to $T$ and moves $i$ into $\x$).

Now suppose neither (i) nor (ii) holds.
Given an edge label $i$, suppose it lies on the lower edge of a box
$\x$ in row $r$. (If $i$ is on a top edge of $\Lambda$ then $r=0$.)
Define ${\tt apfactor}(i)$ as follows:
\begin{equation}
\label{eqn:apfactorusual}
{\tt apfactor}(i)=t_{{\tt Man}(\x)}-t_{{\tt Man}(\x)+r-i+1+ \mbox{$\#$ of $i$'s strictly to the right of $\x$}}.
\end{equation}
where ${\tt Man}(\x)$ is the Manhattan distance as defined in Section~1.

Finally, let
\[{\tt apwt}(T)=\prod_{\mbox{\small $i$ is an edge label of $T$}}{\tt apfactor}(i).\]

We are now ready to state our main result, a partial analogue of the fundamental theorems of \emph{jeu de taquin}.

\begin{Theorem}
\label{thm:AB}
Let $T$ be a lattice semistandard tableau of content $\mu$.
Then:
\begin{itemize}
\item[(I)] ${\tt Eqrect}(T)$ is ${\mu}$-highest weight for any choice
of rectification order.
\item[(II)] The coefficient of $S_{\mu}$ in
${\tt Eqrect}(T)$ is invariant under these choices.
\item[(III)] The coefficient in (II) is ${\tt apwt}(T)$.
\end{itemize}
\end{Theorem}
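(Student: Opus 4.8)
The plan is to prove the three parts of Theorem~\ref{thm:AB} more or less simultaneously, by induction on $|\lambda|$ (the number of boxes of the inner shape $\lambda$ of the skew shape $\nu/\lambda$ on which $T$ lives), with the case $|\lambda|=0$ being trivial since then $T$ is already a straight-shape tableau and, being lattice and semistandard, must equal $S_\mu$ with ${\tt apwt}(T)=1$ (the empty product). For the inductive step, the heart of the matter is a \emph{local confluence} (diamond) statement: given $T$ good and lattice with two distinct available inner corners $\x$ and $\x'$, one shows
\[
\sum_{U\in{\tt Eqjdt}_{\x}(T)}U' \ \text{applied via }{\tt Eqjdt}_{\x'}\ =\ \sum_{U\in{\tt Eqjdt}_{\x'}(T)}U'\ \text{applied via }{\tt Eqjdt}_{\x}
\]
as formal sums with coefficients in $\integers[\beta_1,\dots,\beta_{n-1}]$. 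Once this commutation of single slides at independent corners is in hand, a standard diamond-lemma / Newman-type argument upgrades it to full independence of the rectification order for ${\tt Eqrect}$, giving parts (I) and (II) at once (the $\mu$-highest weight property in (I) being inherited because ${\tt Eqjdt}$ preserves latticeness and semistandardness by Proposition~\ref{prop:semistandard}, and any straight-shape lattice semistandard tableau of content $\mu$ is forced to be $S_\mu$).

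For part (III), I would track the ${\tt apwt}$ statistic through a single slide ${\tt Eqjdt}_{\x}(T)$ and prove the \textbf{key invariance lemma}:
\[
{\tt apwt}(T)\ =\ \sum_{\omega\cdot S \,\in\, {\tt Eqjdt}_{\x}(T)} \omega\cdot {\tt apwt}(S),
\]
where the sum is over all terms $\omega\cdot S$ produced by the slide. This is checked swap-by-swap against the four swap types (I)--(IV). Swaps (I) and (III) only move a box label or reconfigure one edge label while leaving every ${\tt apfactor}$ unchanged (here one must verify that the Manhattan-distance bookkeeping in \eqref{eqn:apfactorusual}, together with the row index $r$ and the count of $i$'s to the right, is exactly preserved when $\bullet$ descends or when resuscitation pushes an edge label down into a new row). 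Swap (II) is where the polynomial coefficient $\beta(\x)$ enters: in the term $T''$ the label ${\mathfrak b}$ has moved from the lower edge of $\x$ to its upper edge, which changes its ${\tt apfactor}$ precisely by splitting off the factor $\beta(\x)=t_{{\tt Man}(\x)}-t_{{\tt Man}(\x)+1}$, matching $\beta(\x)\cdot T' + T''$ on the nose (with ${\tt apwt}(T')$ losing the edge label ${\mathfrak b}$ entirely since it became a box label). Swap (IV) is the delicate one: one must check that bundling the consecutive block $Z=\{{\mathfrak r},\dots,{\mathfrak m}\}$, moving ${\mathfrak m}$ into $\x$ and the rest $Z'$ onto the upper edge, and depositing $\bullet$ into $\y$, changes $\prod_i {\tt apfactor}(i)$ trivially --- this uses conditions (IV)(i)--(iii) (especially the equality of counts ${\mathcal N}_{\y,{\mathfrak l}}^T={\mathcal N}_{\y,{\mathfrak r}}^T$, which guarantees the ``$\#$ of $i$'s to the right'' term compensates exactly for the shift in ${\tt Man}$ and in $r$).

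The \textbf{main obstacle} I anticipate is the local confluence statement for the slides, i.e.\ showing that ${\tt Eqjdt}_{\x}$ and ${\tt Eqjdt}_{\x'}$ commute as operators on formal sums. Unlike classical \emph{jeu de taquin}, a single equivariant slide is not a deterministic path: swap (II) branches, and swap (IV) can transiently \emph{violate} row semistandardness (the ``nearly bad'' tableaux), so one cannot simply invoke the classical commutation of slide paths. One will need to argue that the transient near-badness introduced by a (IV) swap during the $\x$-slide does not interfere with the availability or outcome of the $\x'$-slide, and vice versa; and one will need to handle the case where the two slide paths actually collide (the $\bullet$'s meeting), adapting the classical ``forced rectilinear motion'' argument. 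A clean way to organize this is to prove a stronger statement about commutation of \emph{individual swaps} at spatially separated sites, then handle the boundary interactions (adjacent $\bullet$'s) as a finite, explicit case check, leveraging Claims~\ref{claim:cannotbe} and~\ref{claim:IVispossible} and the goodness-preservation of Proposition~\ref{prop:semistandard} to keep the case analysis bounded. Throughout, the latticeness hypothesis is what prevents pathological interactions, since it is exactly the condition making the block $Z$ in swap (IV) well-behaved and making the final straight-shape tableau rigid.
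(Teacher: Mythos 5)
Your plan founders on the step you yourself flag as the main obstacle: the local confluence of slides. That statement --- that ${\tt Eqjdt}_{\x}$ and ${\tt Eqjdt}_{\x'}$ commute as operators on formal sums, so that ${\tt Eqrect}(T)$ itself is order-independent --- is strictly stronger than what the theorem asserts. Part (II) claims only that the \emph{coefficient of $S_\mu$} is invariant under the rectification order; the full formal sum ${\tt Eqrect}(T)$ (in particular the irregular tableaux appearing in it and their coefficients) is not claimed to be, and the paper never establishes any such confluence. Attempting a diamond-lemma argument here would force you to control the interaction of branching (swap (II)) and the transient near-badness of swap (IV) across two simultaneous slide paths, and there is no evidence this commutation even holds at the level of formal sums. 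So as written, parts (I) and (II) of your argument rest on an unproved (and likely unprovable, or at least unnecessary) lemma.

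The irony is that the ``key invariance lemma'' you reserve for part (III) --- which is precisely Proposition~\ref{prop:weightpres} of the paper, and whose swap-by-swap verification you sketch correctly --- already proves all three parts without any confluence. The argument is: by Proposition~\ref{prop:semistandard}, every swap preserves goodness and latticeness, so the quantity $\sum \omega\cdot {\tt apwt}(S)$, summed over the terms of the current formal sum, is conserved at \emph{every individual swap} of \emph{any} rectification order, and hence equals ${\tt apwt}(T)$ at the end. The terminal tableaux are straight-shape, lattice and semistandard of content $\mu$; the only \emph{regular} such tableau is $S_\mu$ (giving (I)), it has ${\tt apwt}(S_\mu)=1$, and every irregular terminal tableau has ${\tt apwt}=0$ (its edge labels are too high, as no resuscitation is available). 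Hence the conserved quantity collapses to exactly the coefficient of $S_\mu$, which must therefore equal ${\tt apwt}(T)$ under any order --- giving (II) and (III) simultaneously. In short: replace the confluence step with the observation that ${\tt apwt}$ is a conserved quantity whose value on the terminal sum isolates the coefficient of $S_\mu$, and your proof closes.
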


\begin{Remark}
In the classical theory, $T$ rectifies to $S_{\mu}$ if and only if $T$ is lattice
and has content $\mu$. However, in our setting, analogues of these two conditions are
no longer equivalent. Specifically,
it is possible for a non-lattice tableau to become lattice using the
equivariant swaps.
For example, the starting tableau $T$ below is not lattice, but ${\tt Eqjdt}_{\x}(T)$ is:
\[\begin{picture}(250,35)
\put(0,20){$T=\tableau{{\ }&{\ }&{\ }\\{\bullet }&{2}&{2 }}\mapsto
\tableau{{\ }&{\ }&{\ }\\{2 }&{\bullet}&{2 }}
\mapsto
\tableau{{\ }&{\ }&{\ }\\{2 }&{2}&{\bullet }}={\tt Eqjdt}_{\x}(T)$}
\put(30,16){$1$}
\put(50,16){$1$}
\put(107,16){$1$}
\put(125,16){$1$}
\put(182,16){$1$}
\put(201,16){$1$}
\end{picture}\]
Therefore, we proceed to develop an equivariant Littlewood-Richardson rule
using the second of the two classically equivalent conditions.

In order to develop a rule using an analogue of the first condition, one needs swapping rules with the property that non-lattice fillings
stay non-lattice after a swap. It seems to us that such rules would be more complicated
than our current rules.
\qed
\end{Remark}

\begin{Example}
In the following rectification (inside $\Lambda=2\times 2$),
we suppress the computations concerning tableaux with labels that are too high (i.e., will rectify to a irregular tableau):
\[
\begin{picture}(450,50)
\put(26,25){$T=\tableau{{\ }&{\bullet}\\{\ }} \mapsto \beta_3 \ \tableau{{\ }&{1}\\{\bullet}}
+\tableau{{\ }&{\bullet}\\{\ }}$}
\put(57,0){$1$}
\put(75,20){$1$}
\put(127,0){$1$}
\put(181,0){$1$}
\put(199,40){$1$}
\put(220,25){$\mapsto \beta_3 \left( \beta_1 \ \tableau{{\bullet }&{1}\\{1}}
+\tableau{{\bullet }&{1}\\{\ }}\right)+\cdots$}
\put(332,22){$1$}
\end{picture}
\]
\[
\begin{picture}(450,50)
\put(95,25){$\mapsto \beta_1 \beta_3 \ \tableau{{1 }&{1}\\{\bullet }}
+\beta_3\left(\beta_2 \ \tableau{{1 }&{1}\\{\ }}+\tableau{{\bullet}&{1}\\{\ }}\right)+\cdots$}
\put(285,40){$1$}
\end{picture}
\]
\[
\begin{picture}(450,50)
\put(95,25){$\mapsto (\beta_1 \beta_3 +\beta_2\beta_3) \ \tableau{{1 }&{1}\\{\ }}
+\beta_3\ \tableau{{1 }&{\bullet}\\{\ }}+\cdots$}
\put(274,20){$1$}
\end{picture}
\]
\[
\begin{picture}(450,50)
\put(95,25){$\mapsto (\beta_1 \beta_3 +\beta_2\beta_3) \ \tableau{{1 }&{1}\\{\ }}
+\beta_3\ \left(\beta_3 \ \tableau{{1 }&{1}\\{\ }}+ \tableau{{1}&{\bullet}\\{ \ }}\right)+\cdots$}
\put(354,40){$1$}
\end{picture}
\]
\[
\begin{picture}(450,50)
\put(95,25){$\mapsto (\beta_1 \beta_3 +\beta_2\beta_3+\beta_3^2) \ \tableau{{1 }&{1}\\{\ }}
+\cdots={\tt Eqrect}(T)$}
\end{picture}
\]
Hence ${\tt Eqrect}(T)$ is ${(2)}$-highest weight.
Now, ${\tt apwt}(T)=(\beta_1+\beta_2+\beta_3)\beta_3$
which equals the coefficient of $S_{(2)}$ in ${\tt Eqrect}(T)$.
These two facts agree with parts (I) and (III) of Theorem~\ref{thm:AB}, respectively.
\qed
\end{Example}

\subsection{Proof of Proposition~\ref{prop:semistandard}} Suppose we start the computation of ${\tt Eqjdt}_{\x}(T)$, giving rise
to a sequence of swaps of tableaux:
\[T=T^{(0)}\mapsto T^{(1)}\mapsto \cdots\mapsto T^{(i)}.\]
(If we use swap (II) a ``branching'' occurs in the computation. The above sequence represents
one of the paths of the computation.)

We argue by induction that each successive tableau is good and lattice; the base case
is the hypothesis on $T$. If $T^{(i)}$ either has no $\bullet$ or no labels southeast of $\bullet$
then this is one of the tableaux appearing in ${\tt Eqjdt}_{\x}(T)$. Otherwise, we must show that
we can apply exactly one of the swaps (I)--(IV) to obtain $S'=T^{(i+1)}$ which is good and lattice.

There are two cases, depending on whether $S=T^{(i)}$ is really good or nearly bad.

\noindent
{\bf Case 1}: \emph{$S$ is really good:} We break our argument into several claims.

\begin{Claim} If it is possible to apply one of the swaps (I)--(IV) to $S$
then the result is good.
\end{Claim}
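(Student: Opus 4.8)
The plan is to argue by a case analysis according to which of the four swaps (I)--(IV) is the one applicable at the box $\x$ holding the $\bullet$ of $S$; it is immediate from the definitions that at most one of them applies, and by hypothesis at least one does. Throughout I would use that $S$ is really good: away from the $\bullet$ its rows weakly increase and its columns strictly increase, the box immediately left of the $\bullet$ (when filled) carries a label no larger than any label on the lower edge of the $\bullet$ (condition (b)), and the box immediately right of the $\bullet$ (when filled) carries a label no smaller than any label on the upper edge of the $\bullet$ (condition (c)); by the inductive hypothesis of the proof of Proposition~\ref{prop:semistandard} one may also assume $S$ lattice, though latticeness is not actually needed for this Claim. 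I expect that swaps (I), (II), (III) always return a really good tableau, while swap (IV) returns a good tableau that fails to be really good only in the way exhibited in Example~\ref{exa:IVswap}.

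Swaps (I), (II), (III) should be dispatched by direct inspection of their local pictures, using semistandardness of $S$ and conditions (b), (c). For the vertical swap (I) one first notes that the lower edge of $\x$ must be empty: any label on it would be strictly smaller than the box label ${\mathfrak b}$ of $\z$, contradicting both the semistandardness of $S$ along that edge and the choice of ${\mathfrak b}$ as the smallest neighbour below $\x$; hence raising ${\mathfrak b}$ into $\x$ creates no edge conflict, the trigger ${\mathfrak b}\le{\mathfrak r}$ gives the row relation at $\x$, and the corner conditions at the new $\bullet$-position $\z$ follow from the semistandardness of $S$ near $\z$ once one checks, using the skew shape and the fact that the $\bullet$ has travelled monotonically south and east, that the relevant neighbours of $\z$ are filled. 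For the resuscitation swap (III), the label ${\mathfrak u}={\mathfrak r}$ drops from the top edge of $\x$ into the box $\x$ (with a copy written on the lower edge of $\y$) while the $\bullet$ moves east into $\y$; column strictness at $\x$ holds because ${\mathfrak u}$ lay above it on the top edge and ${\mathfrak b}>{\mathfrak r}$ (or there is no ${\mathfrak b}$), and the new corner conditions at $\y$ come from condition (c) of $S$ together with ${\mathfrak r}$ having been the box label of $\y$. For the expansion swap (II) there are two outputs: $\beta(\x)\cdot T'$, in which ${\mathfrak b}$ rises into the box $\x$ and the $\bullet$ disappears --- semistandard because ${\mathfrak b}$ sat strictly below the box or edge above $\x$ and, in its row, weakly below ${\mathfrak r}$ --- and $T''$, in which ${\mathfrak b}$ is instead placed on the top edge of $\x$ and the $\bullet$ remains --- for which the only new points are that ${\mathfrak b}$ stays strictly below the box above $\x$ (inherited from $S$) and that corner condition (c) at the retained $\bullet$ now reads ${\mathfrak r}\ge{\mathfrak b}$, which is exactly the trigger.

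The main obstacle is the horizontal swap (IV), and this is where I expect the genuine bookkeeping to lie. Here the $\bullet$ moves east into $\y$; the largest element ${\mathfrak m}$ of $Z=\{{\mathfrak r},\dots,{\mathfrak m}\}$ becomes the new box label of $\x$; the smaller elements $Z'=\{{\mathfrak r},\dots,{\mathfrak m}-1\}$ are adjoined to the old top edge $U$ of $\x$, forming $W=U\cup Z'$; and $Z$ is deleted from the lower edge $Y$ of $\y$, leaving $Y'=Y\setminus Z$; every cell and edge at or below the lower edge of $\x$ is untouched. Since only these four cells/edges change, the only semistandardness relations that could newly fail involve them, and I would verify each in turn. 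First, because swap (III) does not apply we have $\max U<{\mathfrak r}\le{\mathfrak m}$, so the new top edge $W$ lies strictly below the new box label ${\mathfrak m}$; and $W$ lies strictly above the box over $\x$, since $U$ already did in $S$ and every element of $Z'$ is at least ${\mathfrak r}$, which is strictly above the label of the box over $\y$ and hence, by weak increase one row up, strictly above the label of the box over $\x$. Next, condition (IV)(i) supplies at once the inequality ${\mathfrak m}<{\mathfrak b}$, giving column strictness below $\x$, and the fact that ${\mathfrak m}$ is at least the label of the box left of $\x$, giving the row relation to the left. Next, every label on the lower edge $Y$ of $\y$ exceeds ${\mathfrak r}$, and by (IV)(iii) the whole run $\{{\mathfrak r}+1,\dots,{\mathfrak m}\}$ lies in $Y$; hence $Y'$ consists of integers $\ge{\mathfrak m}+1$, which gives corner condition (b) at the new $\bullet$-position $\y$ and keeps $Y'$ strictly below the box under $\y$. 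Finally, corner condition (c) at $\y$ is unaffected by the swap and holds because the largest label on the top edge of $\y$ is smaller than ${\mathfrak r}$, which is at most the label of the box right of $\y$. The only relation that need not survive is the comparison of the new box label ${\mathfrak m}$ of $\x$ with the label of the box immediately right of $\y$ --- these become adjacent in the row-reading of $T'$ once the $\bullet$ is ignored, and ${\mathfrak m}$ may be the larger --- but this is exactly the exception permitted in the definition of \emph{nearly bad}, so $T'$ is good. That completes the case analysis.
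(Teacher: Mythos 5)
Your proposal is correct and follows essentially the same route as the paper: a case-by-case inspection of the four swaps, checking the handful of local semistandardness and corner conditions that can change, and observing that only the horizontal swap (IV) can produce the ``nearly bad'' configuration, which is still good by definition. Your treatment of swap (IV) is in fact more detailed than the paper's (which compresses the second-column checks into one sentence), while in swaps (II) and (III) you leave the row comparison with the box to the left of $\x$ implicit, a detail the paper does spell out via condition (b); both versions are complete in substance.
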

\begin{proof}
Suppose the vertical swap (I) is applied. Thus, $S$ locally looks like
\[S=\tableau{d&\bullet&e\\f&g&h},\]
where $g\leq e$ (and there is no label on the edge above the $g$). Thus we obtain
$S'=\tableau{d&g&e\\f&\bullet&h}$. To check that $S'$ is really good, one only needs
$d\leq g$ (if $d$ exists). If $d$ exists, so must $f$ and $d<f\leq g$ (since $S$ is good), as needed.

Next, suppose the expansion swap (II) is applied, thus
\[\begin{picture}(200,30)
\put(50,15){$S=\tableau{d&\bullet&e\\f&g&h}$}
\put(100,13){$y$}
\end{picture}
\]
where $y$ is the smallest label on its edge and $y\leq e$.
If $S'$ is the result of having the $y$ jump to the top edge, then $S'$ is really good since $S$ is
really good and, as we have assumed, $y\leq e$. Also, we know $d\leq y$ (again since $S$ is good)
and hence if $S'$ is the result of replacing $\bullet$ by $y$, then $S'$ is really good.

If a resuscitation swap (III) is used, we would have:
\[\begin{picture}(300,55)
\put(50,35){$S=\tableau{p&q&t\\m&\bullet&{\mathfrak r}\\s&f&h}$}
\put(100,13){${\mathfrak b}$}
\put(100,32){${\mathfrak r}$}
\put(150,35){$\mapsto S'=\tableau{p&q&t\\m&{\mathfrak r}&\bullet\\s&f&h}$}
\put(237,13){${\mathfrak r}$}
\put(217,13){${\mathfrak b}$}
\end{picture}\]
where ${\mathfrak u}={\mathfrak r}$ is the largest label on its edge.
Since $S$ is really good, $m\leq {\mathfrak r}$. Hence $S'$ is really good.

Finally, suppose we use a horizontal swap (IV) to arrive at $S'$. Thus:
\[\begin{picture}(300,35)
\put(50,15){$S=\tableau{d&\bullet&{\mathfrak r}&w\\s&f&h&v}$}
\put(99,11){${\mathfrak b}$}
\put(98,30){$U$}
\put(117,11){$Y$}
\put(160,15){$\mapsto S'=\tableau{d&{\mathfrak m}&\bullet&w\\s&f&h&v}$}
\put(244,11){$Y'$}
\put(227,11){${\mathfrak b}$}
\put(226,30){$W$}
\end{picture}\]
Recall $Z=\{{\mathfrak r},{\mathfrak r}+1,\ldots,{\mathfrak m}\}$ is the set of labels that move from the
third column to the second (relative to our local picture). Removal of these labels
clearly keeps the third column of $S'$
semistandard since the third column of $S$ is assumed to be semistandard.
By the really goodness of $S$ and the assumption that (III) does not apply, it follows that the maximal element immediately above the $\bullet$ in $S$ is
strictly less than $\mathfrak r$.  These considerations, and condition (IV)(i), imply the semistandardness of the second
column of $S'$.
Now, (IV)(i) allows, at worst,
the possibility that $S'$ is nearly bad, i.e., that $w<{\mathfrak m}$. However,
even in that case, $S'$ is good (by definition).
\end{proof}

\begin{Claim}
Exactly one of the swaps (I)--(IV) is applicable.
\end{Claim}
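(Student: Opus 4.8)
The plan is a short but careful case analysis organized around the two labels $\mathfrak{b}$ (the smallest neighbouring label below $\x$, when present) and $\mathfrak{r}$ (the label in the box $\y$, when present). Since we are in the case where the $\bullet$ has a neighbouring label to its southeast, at least one of $\mathfrak{b}$ and $\mathfrak{r}$ is present. I would first record that exactly one of two mutually exclusive, jointly exhaustive regimes holds: \textbf{(A)} $\mathfrak{b}$ is present, and either $\mathfrak{b}\leq\mathfrak{r}$ or $\mathfrak{r}$ is absent (so that $\mathfrak{b}$ governs); \textbf{(B)} $\mathfrak{b}$ is absent, or both are present with $\mathfrak{b}>\mathfrak{r}$ --- and in either sub-case of (B) the label $\mathfrak{r}$ is present. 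Regime (A) is precisely the common hypothesis of swaps (I) and (II) (reading the hypothesis of (II) as permitting $\mathfrak{r}$ to be absent, in parallel with (I)), and regime (B) is precisely the common hypothesis of (III) and (IV). So it remains to show that within (A) exactly one of (I), (II) applies, and within (B) exactly one of (III), (IV) applies.

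For regime (A): the realized value $\mathfrak{b}$ is, by definition, either the label of the box $\z$ immediately below $\x$ or the smallest label on the lower edge of $\x$, and these two sources are mutually exclusive. Indeed, by semistandardness of $S$ (condition (a) of ``really good''), any label on the lower edge of $\x$ is strictly smaller than the label in $\z$ whenever $\z$ is a box; hence if the lower edge of $\x$ carries any label then $\mathfrak{b}$ is attained there --- swap (II) applies and (I) does not --- while if it carries none then $\mathfrak{b}$ can only be the label of $\z$ --- swap (I) applies and (II) does not. Thus exactly one of (I), (II) is applicable.

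For regime (B): here $\mathfrak{r}$ is present. Letting $\mathfrak{u}$ be the largest label on the upper edge of $\x$ (if any), swap (III) is applicable exactly when $\mathfrak{u}=\mathfrak{r}$, and swap (IV) is by definition invoked exactly when (III) is not; so (III) and (IV) are mutually exclusive and trivially exhaustive inside (B). (Really-goodness condition (c) forces $\mathfrak{u}\leq\mathfrak{r}$, so this dichotomy is genuinely ``$\mathfrak{u}=\mathfrak{r}$'' versus ``$\mathfrak{u}<\mathfrak{r}$ or no upper label''.) The one genuinely substantive point --- which I expect to be the main obstacle --- is to confirm that when (III) fails, swap (IV) is a \emph{legal} move: that the maximal $\mathfrak{m}$ subject to (IV)(i)--(iii) satisfies $\mathfrak{m}\geq\mathfrak{r}$, so that $Z=\{\mathfrak{r},\mathfrak{r}+1,\dots,\mathfrak{m}\}$ is nonempty. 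For this I would verify that $\mathfrak{m}=\mathfrak{r}$ already meets (IV)(i)--(iii): condition (iii) concerns the empty set $\{\mathfrak{r}+1,\dots,\mathfrak{m}\}$ and is vacuous; condition (ii) for the single label $\mathfrak{l}=\mathfrak{r}$ is the tautology ${\mathcal N}_{\y,\mathfrak{r}}^T={\mathcal N}_{\y,\mathfrak{r}}^T$; and condition (i) asks that $\mathfrak{r}<\mathfrak{b}$ --- vacuous if $\mathfrak{b}$ is absent, and otherwise the defining inequality of regime (B) --- and that $\mathfrak{r}$ be at least the label in the box immediately left of $\x$ (if any), which holds since the rows of $S$ are weakly increasing once the $\bullet$ is disregarded (condition (a)). Hence (IV) applies, which completes the analysis.

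Apart from that single point, the only real risk is bookkeeping: keeping the ``(or there is no $\mathfrak{r}$)'' and ``(or there is no $\mathfrak{b}$)'' conventions straight so that every admissible configuration falls into exactly one of (A), (B) with no configuration orphaned by an absent $\mathfrak{b}$ or $\mathfrak{r}$. This is routine given that $\bullet$ is assumed to have a southeastern neighbour, but it is where an oversight would most naturally hide.
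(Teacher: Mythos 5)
Your proof is correct and follows essentially the same route as the paper's: split on $\mathfrak{b}\leq\mathfrak{r}$ (or no $\mathfrak{r}$) versus $\mathfrak{b}>\mathfrak{r}$ (or no $\mathfrak{b}$), observe that in the first case exactly one of (I),(II) applies according to whether $\mathfrak{b}$ sits on the lower edge or in the box $\z$, and in the second case that (III) applies precisely when $\mathfrak{u}=\mathfrak{r}$ while otherwise $Z$ is nonempty (via $\mathfrak{m}=\mathfrak{r}$, using $\mathfrak{r}<\mathfrak{b}$ and the weakly-increasing-row condition) so (IV) is legal. Your write-up fills in slightly more detail than the paper's one-line treatment of the (I)/(II) dichotomy, but the argument is the same.
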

\begin{proof} In the case
${\mathfrak b}\leq {\mathfrak r}$ (or ${\mathfrak r}$ does not exist),
one can apply either a vertical or expansion swap but not both.
Thus suppose ${\mathfrak b}>{\mathfrak r}$ (or there is no ${\mathfrak b}$). Locally, we have
\[\begin{picture}(100,15)
\put(0,0){$S=\tableau{{a}&{\bullet}&{\mathfrak r}}.$}
\put(49,15){$\mathfrak u$}
\put(49,-3){${\mathfrak b}$}
\put(68,-3){$Y$}
\end{picture}
\]
(The argument is the same if ${\mathfrak b}$ is the label of the box below the $\bullet$.)
Since $S$ is good we have ${\mathfrak u}\leq {\mathfrak r}$.
If ${\mathfrak u}= {\mathfrak r}$ then one can apply
a resuscitation move (III) (and, by definition, not a horizontal swap (IV)).

Hence we may assume ${\mathfrak u}<{\mathfrak r}<{\mathfrak b}$. Now, (IV) is always possible since the set $Z$ in the definition of (IV) is nonempty by the given inequalities and the assumption
$a\leq {\mathfrak r}$ (since $S$ is really good).
\end{proof}

We also need to show $S'$ is lattice. This will be argued after Case~2 since the proof only assumes $S$ is good.

\noindent
{\bf Case 2}: \emph{$S$ is nearly bad:} In Case 1 we proved a really good tableau can
become nearly bad only after using swap (IV) on some tableau $S^{-}$.
Suppose then that $S$ was obtained using swap (IV) from some tableau $S^-$,
where $S^-$ may be nearly bad.
Let the local
pictures of these tableaux be
\[\begin{picture}(300,45)
\put(50,25){$S^{-}=\tableau{p&q&t&x\\d&\bullet&{\mathfrak r}&{{\overline{\mathfrak r}} }}$}
\put(107,3){${\mathfrak b}$}
\put(105,20){$U$}
\put(125,20){$\overline U$}
\put(124,3){$Y$}
\put(142,0){${\overline Y}$}

\put(180,25){$S=\tableau{p&q&t&x\\d&{\mathfrak m}&\bullet&{{\overline{\mathfrak r}} }}$}
\put(230,3){${\mathfrak b}$}
\put(228,20){$W$}
\put(248,20){$\overline U$}
\put(247,3){$Y'$}
\put(265,1){${\overline Y}$}

\end{picture}\]
where $S$ being nearly bad means ${\mathfrak m}>{\overline {\mathfrak r}}$. We now construct the next swap
$S\mapsto S'$.

\begin{Claim}
\label{claim:cannotbe}
No swap of type (I), (II), or (III) is applicable to $S$.
\end{Claim}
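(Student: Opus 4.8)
The plan is to exploit the local picture of $S$ produced by swap (IV) and check, swap type by swap type, why none of (I), (II), (III) can fire next. Recall that after the (IV) move we are at
\[S=\tableau{p&q&t&x\\d&{\mathfrak m}&\bullet&{{\overline{\mathfrak r}}}},\]
with the $\bullet$ sitting in what was the box holding ${\mathfrak r}$, and with ${\mathfrak m}>{\overline{\mathfrak r}}$ (this inequality is exactly the nearly-bad condition). First I would identify the new ``${\mathfrak b}$'' and ``${\mathfrak r}$'' for the swap out of the current $\bullet$-box: the label immediately right of $\bullet$ is ${\overline{\mathfrak r}}$, and the smallest label below $\bullet$ is whatever sits on the lower edge of that box or in the box beneath it (generically ``$Y$'' in the old picture, but the precise name is irrelevant — call it ${\mathfrak b}'$). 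The governing dichotomy for which swap applies is the comparison of ${\mathfrak b}'$ with ${\overline{\mathfrak r}}$.

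The key observation is that ${\overline{\mathfrak r}}$ is strictly smaller than ${\mathfrak m}$, which is in turn at least ${\mathfrak r}$, and $Y$ consisted of labels $\geq {\mathfrak b}>{\mathfrak m}$; so ${\overline{\mathfrak r}}<{\mathfrak b}'$ for any candidate ${\mathfrak b}'$ coming from that region, meaning we are automatically in the ${\mathfrak b}'>{\mathfrak r}'$ (with ${\mathfrak r}'={\overline{\mathfrak r}}$) branch. This immediately rules out the vertical swap (I) and the expansion swap (II), since both require ${\mathfrak b}'\leq{\mathfrak r}'$. The only remaining possibility among (I)--(III) is the resuscitation swap (III), and here I would use condition (IV)(iii) together with really-goodness/near-badness of $S^-$: the labels that swap (IV) just deposited on the upper edge of the $\bullet$-box are precisely $Z'=\{{\mathfrak r},{\mathfrak r}+1,\dots,{\mathfrak m}-1\}$, and ${\mathfrak m}-1<{\mathfrak m}$ while ${\overline{\mathfrak r}}\geq{\mathfrak r}$ would be needed; more to the point, (III) requires the \emph{largest} label ${\mathfrak u}$ on the upper edge of the $\bullet$-box to equal the label ${\mathfrak r}'={\overline{\mathfrak r}}$ to its right, and that largest label is ${\mathfrak m}-1$, which strictly exceeds ${\overline{\mathfrak r}}$ exactly because near-badness says ${\mathfrak m}>{\overline{\mathfrak r}}$, hence ${\mathfrak m}-1\geq{\overline{\mathfrak r}}$ — I need to argue the equality case ${\mathfrak m}-1={\overline{\mathfrak r}}$ is also excluded, which should follow from (IV)(ii) (a counting condition on occurrences in the relevant columns) forcing ${\overline{\mathfrak r}}\neq{\mathfrak m}-1$ since ${\overline{\mathfrak r}}$ would otherwise have been swept into $Z$ by the prior (IV) move.

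I expect the main obstacle to be the bookkeeping in this last step: pinning down exactly why ${\mathfrak u}={\mathfrak m}-1$ is strictly greater than ${\overline{\mathfrak r}}$, i.e., ruling out ${\overline{\mathfrak r}}={\mathfrak m}-1$. The clean way is to note that if ${\overline{\mathfrak r}}$ equalled some element of $Z'$, then by conditions (IV)(ii)--(iii) applied to the original (IV) swap, ${\overline{\mathfrak r}}$ (and all of $Z$ up to it) would itself have been included in $Z$, contradicting the maximality/consistency constraints that produced the stated $Z$; so ${\overline{\mathfrak r}}\notin Z'$, and since ${\overline{\mathfrak r}}<{\mathfrak m}$ this forces ${\overline{\mathfrak r}}<{\mathfrak r}\leq {\mathfrak u}$, killing (III). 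Everything else is a direct reading of the definitions, so the proof is short once this comparison is nailed down.
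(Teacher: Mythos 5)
Your conclusion is correct, but the argument for ruling out the resuscitation swap (III) rests on a misreading of swap (IV), and this is a genuine gap. Swap (IV) deposits $Z'=\{\mathfrak r,\dots,\mathfrak m-1\}$ on the upper edge of the box $\x$ that receives $\mathfrak m$ --- i.e.\ the box to the \emph{left} of where the $\bullet$ ends up --- and then places the $\bullet$ in $\y$. The upper edge of the new $\bullet$-box is therefore \emph{not} $W=U\cup Z'$; it is the edge $\overline U$ that was already sitting above the box containing $\mathfrak r$ in $S^-$, unchanged by the swap. So your identification of the largest upper-edge label as $\mathfrak m-1$, and the ensuing discussion of whether $\mathfrak m-1=\overline{\mathfrak r}$ can occur, is analyzing the wrong edge. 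The correct (and much shorter) argument is the one the paper gives: by semistandardness of $S^-$, every label of $\overline U$ is strictly smaller than the box label $\mathfrak r$ beneath it, so $\max\overline U<\mathfrak r\le\overline{\mathfrak r}$, and hence the hypothesis $\mathfrak u=\mathfrak r'$ of swap (III) fails. Note the inequality goes the opposite way from the one you were trying to establish ($\mathfrak u<\overline{\mathfrak r}$ rather than $\mathfrak u>\overline{\mathfrak r}$).

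Your treatment of (I) and (II) reaches the right conclusion but also contains a false intermediate claim: it is not true that ``$Y$ consisted of labels $\ge\mathfrak b$'' --- indeed $Y$ contains $\{\mathfrak r+1,\dots,\mathfrak m\}$, all of which are $<\mathfrak b$ by (IV)(i). What is true, and what you need, is that the lower edge of the new $\bullet$-box is $Y'=Y\setminus Z$, and every element of $Y$ lying in the interval $(\mathfrak r,\mathfrak m]$ was absorbed into $Z$, so $\min Y'>\mathfrak m>\overline{\mathfrak r}$ (and a box label below would be larger still). That is the comparison which puts you in the $\mathfrak b'>\mathfrak r'$ branch and excludes (I) and (II). I recommend redoing the proof with the local picture of $S$ drawn carefully, keeping track of which edges belong to $\x$ and which to $\y$.
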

\begin{proof}
 If $Y'\neq \emptyset$
then let $b'=\min Y'$. Then $b'>{\mathfrak m}>{\overline {\mathfrak r}}$ (by column semistandardness of $S^{-}$ and the assumption $S$ is nearly bad). Therefore
(II) cannot be applied. If $Y'=\emptyset$ then a similar argument
shows that (I) cannot be applied either.
Also, if ${\overline u}=\max {\overline U}$ exists, then ${\overline u}<
\mathfrak r \leq \overline{\mathfrak r}$. Hence
a resuscitation swap (III) cannot be applied either.
\end{proof}

\begin{Claim}
\label{claim:IVispossible}
Swap (IV) is applicable to $S$.
\end{Claim}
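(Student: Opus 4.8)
The goal of Claim~\ref{claim:IVispossible} is to show that after a swap~(IV) produces a nearly bad tableau $S$, the very next swap is again of type~(IV) — this is the ``self-repairing'' behavior anticipated in the third bullet of Example~\ref{exa:IVswap}. Since Claim~\ref{claim:cannotbe} has already eliminated swaps (I), (II), and (III), and since exactly one of (I)--(IV) must be applicable whenever $S$ has a $\bullet$ with southeastern neighbours (the mechanism of the swap rules is exhaustive in the cases on ${\mathfrak b}$ versus ${\mathfrak r}$), it suffices to verify that the defining data of swap~(IV) can actually be assembled at the box $\x'$ of $S$ containing the $\bullet$ — i.e., that the set $Z$ in the definition of~(IV) is nonempty and well-defined at that box.

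First I would set up the local picture of $S$ around $\x'$, reusing the notation from Claim~\ref{claim:cannotbe}: the box to the left of $\x'$ holds ${\mathfrak m}$, the box below $\x'$ has smallest southern neighbour ${\mathfrak b}$ (unchanged from $S^-$), and the box $\bar\y$ to the right of $\x'$ holds $\bar{\mathfrak r}$ with the edge labels $\overline U$ above it and $\overline Y$ below it. The key inequalities already in hand are: ${\mathfrak m} > \bar{\mathfrak r}$ (this is exactly what ``nearly bad'' means), and $\bar{\mathfrak r} < {\mathfrak b}$ together with ${\mathfrak m} < {\mathfrak b}$ (from condition~(IV)(i) applied to the swap $S^- \mapsto S$, plus column semistandardness of $S^-$). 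So the new ``${\mathfrak r}$'' for the swap at $\x'$ is $\bar{\mathfrak r}$, the new ``${\mathfrak b}$'' is still ${\mathfrak b}$, and I need the new $Z = \{\bar{\mathfrak r}, \bar{\mathfrak r}+1,\dots,{\mathfrak m}'\}$ with ${\mathfrak m}'$ chosen maximal subject to (IV)(i)--(iii). The point is that ${\mathfrak m}' = {\mathfrak m}$ is already a legal choice: (IV)(i) holds because $\bar{\mathfrak r} \le {\mathfrak m} < {\mathfrak b}$ and ${\mathfrak m}$ is (by construction of the previous swap) at least the entry to the left of $\x'$, which \emph{is} ${\mathfrak m}$; (IV)(iii) holds because the labels $\{\bar{\mathfrak r}+1,\dots,{\mathfrak m}\}$ were placed on the lower edge of $\bar\y$ — here I would need to trace through where those intermediate labels live after the swap $S^- \mapsto S$, invoking that the entries of $Z \setminus \{{\mathfrak m}\}$ from the previous swap went onto the upper edge of $\x'$, hence onto the lower edge of the box now playing the role of ``$\y$''; and (IV)(ii) holds because the column counts ${\mathcal N}^S_{\bar\y,{\mathfrak l}}$ were not disturbed for ${\mathfrak l}$ in this range (no label in $\{\bar{\mathfrak r},\dots,{\mathfrak m}\}$ crossed the column boundary to the right of $\bar\y$ in the step $S^- \mapsto S$). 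In particular $Z \ni \bar{\mathfrak r}$ so $Z \ne \emptyset$, and swap~(IV) applies.

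The step I expect to be the main obstacle is the careful bookkeeping of condition~(IV)(ii) — the column-count condition ${\mathcal N}^S_{\bar\y,{\mathfrak l}} = {\mathcal N}^S_{\bar\y,\bar{\mathfrak r}}$ for all ${\mathfrak l}$ in the candidate range. This is where the \emph{lattice} hypothesis on $S$ (guaranteed by the inductive half of Proposition~\ref{prop:semistandard}, once one knows $S'$ is lattice, which is argued separately) must interact with the fact that the previous swap~(IV) already maximized its own $Z$: one has to argue that the failure of ${\mathfrak m}$ to reach all the way to ${\mathfrak b}-1$ in the \emph{previous} step, combined with latticeness, forces the new column counts to behave so that extending past ${\mathfrak m}$ is at worst optional, not forbidden — i.e., the same ``bottleneck'' label that capped the previous $Z$ is still the relevant constraint, so $Z$ at $\x'$ is again well-defined with a legitimate maximal ${\mathfrak m}'$. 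The rest — the inequalities of (IV)(i) and (iii) — is routine once the local picture is drawn, and I would present it compactly. I would close by remarking that this completes the induction in Case~2 of the proof of Proposition~\ref{prop:semistandard}: the $\bullet$ continues to move rightward by swaps of type~(IV), and since each such swap strictly advances the $\bullet$ one box to the right, after finitely many steps either the $\bullet$ exits the shape or loses its southeastern neighbours, terminating the slide.
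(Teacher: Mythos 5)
There is a genuine gap, and it sits exactly where the real content of this claim lives. You assert that $\mathfrak m'=\mathfrak m$ is ``already a legal choice,'' and you justify condition (IV)(iii) by saying the labels $\{\overline{\mathfrak r}+1,\dots,\mathfrak m\}$ ``were placed on the lower edge of $\overline\y$'' by the previous swap, because $Z\setminus\{\mathfrak m\}$ went onto the upper edge of the old $\bullet$-box. This misreads the geometry of swap (IV): the previous swap deposits $Z\setminus\{\mathfrak m\}=\{\mathfrak r,\dots,\mathfrak m-1\}$ on the \emph{upper} edge of the box one column to the \emph{left} of the new $\bullet$ (the set $W$ in the paper's local picture), not on the lower edge of the box $\overline\y$ to its right. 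The lower edge of $\overline\y$ is $\overline Y$, which is untouched by the swap $S^-\mapsto S$, and there is no a priori reason it contains $\overline{\mathfrak r}+1,\dots,\mathfrak m$. Verifying that it does --- equivalently, that the maximal admissible $\overline{\mathfrak m}$ satisfies $\overline{\mathfrak m}\ge\mathfrak m$, so that the ``at least as large as the entry to the left'' part of (IV)(i) can be met --- is the entire point of the claim, and it is false for general semistandard fillings: it genuinely requires latticeness. The paper proves it by contradiction: assuming $\overline{\mathfrak m}<\mathfrak m$, it shows $\overline{\mathfrak m}+1$ either lies in $\overline Y$ (whence, by the column-count equalities \eqref{eqn:latticeequal2} derived from (IV)(ii) of the previous swap, it should have been included in $\overline Z$, contradicting maximality), or does not appear in that column at all (whence the subtableau weakly to the right has the labels $\mathfrak r,\dots,\overline{\mathfrak m}+1$ in equal numbers but fails latticeness in its leftmost column --- impossible). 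Your proposal contains neither prong of this argument.

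Secondary issues: your treatment of (IV)(ii) is also too thin --- ``the column counts were not disturbed'' only transports equalities from $S^-$ to $S$, but the equalities you need (counts for $\overline{\mathfrak r},\dots,\mathfrak m$ relative to the column of $\overline\y$) are not the ones given by the previous swap's (IV)(ii) (which are relative to the column of the old $\y$, for the range starting at $\mathfrak r$); one must pass from \eqref{eqn:latticeequal1} to \eqref{eqn:latticeequal2} using the fact that each label of $Z$ occurs exactly once in the vacated column. Also, you cannot invoke ``exactly one of (I)--(IV) is applicable'' here: that was established only for really good tableaux, and for nearly bad $S$ the applicability of (IV) is precisely what Claims~\ref{claim:cannotbe} and \ref{claim:IVispossible} must establish from scratch. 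You correctly flag that something about latticeness and the maximality of the previous $Z$ must enter, but you locate the difficulty in the wrong condition and supply no argument; as written the proof does not go through.
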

\begin{proof}
Since $S^{-}$ is good, we have
$\max {\overline U}<{\mathfrak r}\leq {\overline {\mathfrak r}}$.
Also, by the definition of (IV) we have $\min Y'>{\mathfrak m}>{\overline{\mathfrak r}}$.
Therefore, ${\overline {\mathfrak r}}$ can be placed in the edge of ${\overline U}$
in $S$ and maintain the vertical semistandardness in that column. Thus, let ${\overline {{\mathfrak m}}}$
be the largest label from ${\overline Y}$ with this property such that the consecutive
sequence $\overline Z=\{{\overline{\mathfrak r}}, {\overline {\mathfrak r}}+1,\ldots, {\overline {\mathfrak m}}\}$
\emph{could} form the sequence of labels that move left in the swap (IV) starting from $S$.
That is, they satisfy (IV)(i),(ii),(iii) provided
${\overline {\mathfrak m}}\geq {\mathfrak m}$.
In this case, the swap (IV) $S\mapsto S'$
would result in a good tableau:
\[\begin{picture}(200,45)
\put(50,25){$S^{'}=\tableau{p&q&t&x\\d&{\mathfrak m}&{{\overline {{\mathfrak m}}}}&\bullet}$}
\put(102,3){${\mathfrak b}$}
\put(102,20){$W$}
\put(120,20){$\overline W$}
\put(119,2){$Y'$}
\put(138,0){${\overline Y}'$}
\end{picture}\]
In order to reach a contradiction, suppose ${\overline {{\mathfrak m}}}<{\mathfrak m}$.

    $S^{-}$ is lattice (by induction). Condition (IV)(ii) gives
    \begin{equation}
    \label{eqn:latticeequal1}
    {\mathcal N}_{{\rm col} \ 3, {\mathfrak r}}^{S^{-}}=
    {\mathcal N}_{{\rm col} \ 3, {\mathfrak r}+1}^{S^{-}}=\cdots
    ={\mathcal N}_{{\rm col} \ 3,{\mathfrak m}}^{S^{-}}.
    \end{equation}

Since the labels ${\mathfrak r},{\mathfrak r}+1,\ldots,{\mathfrak m}$ appear in column $3$
of $S^{-}$, (\ref{eqn:latticeequal1}) implies
\begin{equation}
\label{eqn:latticeequal2}
     {\mathcal N}_{{\rm col} \ 4, {\mathfrak r}}^{S}=
    {\mathcal N}_{{\rm col} \ 4, {\mathfrak r}+1}^{S}=\cdots
    ={\mathcal N}_{{\rm col} \ 4,{\mathfrak m}}^{S}.
    \end{equation}
We also know that
\begin{equation}
\label{eqn:someineq1}
{\mathfrak r}\leq \overline{\mathfrak r}\leq {\overline {{\mathfrak m}}}<{\mathfrak m}.
\end{equation}
The first inequality is the induction hypothesis:
$S^{-}$ is row semistandard (to the right of the $\bullet$). The second inequality is the vertical semistandardness in the fourth column of $S$ combined with the fact ${\overline {\mathfrak m}}\in {\overline Y}$. The third inequality is our assumption to be contradicted.

Suppose ${\overline {{\mathfrak m}}}+1(\leq {\mathfrak m})$ appears in column $4$ of $S$.
If ${\overline {\mathfrak m}}+1$ were in the box below the edge of ${\overline Y}$ in $S^{-}$ then
since $S^{-}$ is good,
the
box to its immediate left must be filled with ${\mathfrak q}\leq {\overline {{\mathfrak m}}}+1$. But ${\mathfrak m}\in Y$ and ${\mathfrak m}\geq {\overline {{\mathfrak m}}}+1\geq {\mathfrak q}$ implying this filling is impossible. Hence we may assume ${\overline {{\mathfrak m}}}+1\in {\overline Y}$.
Then this, together with
(\ref{eqn:latticeequal2}) and (\ref{eqn:someineq1}),
imply ${\overline {{\mathfrak m}}}+1(\leq {\mathfrak m}<\min Y')$ should have been included in $\overline Z$, contradicting the definition of
${\overline {{\mathfrak m}}}$.

Therefore ${\overline {{\mathfrak m}}}+1$ does not appear in column $4$ of $S^{-}$. Then let $X$ be the subtableau of $S^{-}$ using the boxes in columns weakly to the right of column $4$ of $S^{-}$. Then $X$ has the labels ${\mathfrak r},\ldots,{\overline {{\mathfrak m}}}+1$ in equal numbers, is lattice in those labels, and does not have ${\overline {{\mathfrak m}}}+1$ in its leftmost column. This is impossible, another contradiction. Hence, in fact, ${\mathfrak m}\leq {\overline {{\mathfrak m}}}$ as desired. This means
$S'$ is at worst nearly bad and therefore good, as desired.
\end{proof}

Summarizing, if $S$ is nearly bad then it was obtained by a horizontal swap (IV) from either a really good $S^{-}$ or a nearly bad $S^{-}$ whose near badness
occurs in the same row but one step to the right.

To complete both Cases 1 and 2, it remains to prove:

\begin{Claim}
Any swap $T\mapsto T'$ starting from a good and lattice $T$, results in $T'$ being lattice.
\end{Claim}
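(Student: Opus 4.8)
The plan is to go through the four swap types in turn and check, for each, that it preserves the column occurrence counts in the way the lattice condition needs. Throughout write $\mathcal N_{c,\mathfrak l}$ for the number of occurrences of a label $\mathfrak l$ in the columns weakly to the right of column $c$, so that ``lattice'' is exactly the collection of inequalities $\mathcal N_{c,\mathfrak l}\geq\mathcal N_{c,\mathfrak l+1}$. Swaps (I), (II), and (III) never carry a label to a different column: in (I), $\mathfrak b$ goes from $\z$ into $\x$, which share a column; in (II), $\mathfrak b$ stays in the column of $\x$, moving into $\x$ or up to its upper edge; in (III), $\mathfrak u=\mathfrak r$ goes from the upper edge of $\x$ into $\x$, while the $\mathfrak r$ in $\y$ drops to the lower edge of $\y$, both within the column of $\y$. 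So every $\mathcal N_{c,\mathfrak l}$ is unchanged and lattice-ness is preserved for free.

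Hence all the content is in swap (IV). Let $c$ be the column of $\x$, so that $\y$ lies in column $c+1$, and recall $Z=\{\mathfrak r,\mathfrak r+1,\dots,\mathfrak m\}$. I would first record the net effect of (IV) on occurrence counts: for each $\mathfrak l\in Z$, exactly one copy of $\mathfrak l$ is transferred from column $c+1$ (from $\y$ itself when $\mathfrak l=\mathfrak r$, from the lower edge of $\y$ otherwise) into column $c$ (into $\x$ when $\mathfrak l=\mathfrak m$, onto the upper edge of $\x$ otherwise), and nothing else moves — in particular $\mathfrak b$ stays put. Thus $\mathcal N_{c',\mathfrak l}$ is unchanged for every $c'\neq c+1$, while $\mathcal N_{c+1,\mathfrak l}$ drops by one exactly for $\mathfrak l\in Z$. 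Running through the inequalities $\mathcal N_{c+1,\mathfrak l}^{T'}\geq\mathcal N_{c+1,\mathfrak l+1}^{T'}$, every one is inherited from the corresponding inequality for $T$ except the single one at $\mathfrak l=\mathfrak m$, whose left side dropped but whose right side did not. So the whole claim reduces to showing that the lattice inequality of $T$ at column $c+1$ for the pair $(\mathfrak m,\mathfrak m+1)$ is \emph{strict}:
\[
\mathcal N_{c+1,\mathfrak m}^{T}\ \geq\ \mathcal N_{c+1,\mathfrak m+1}^{T}+1 .
\]

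To prove strictness I would assume, for contradiction, that equality holds (it holds with ``$\geq$'' because $T$ is lattice). Combined with clause (IV)(ii) this gives $\mathcal N_{c+1,\mathfrak r}^{T}=\cdots=\mathcal N_{c+1,\mathfrak m}^{T}=\mathcal N_{c+1,\mathfrak m+1}^{T}$, so $\mathfrak m+1$ satisfies clause (IV)(ii); by the maximality of $\mathfrak m$ it must therefore fail clause (IV)(i) or (IV)(iii). If it fails (IV)(iii): deleting the leftmost column of the (still lattice) subtableau of $T$ on columns $\geq c+1$, and using that $\mathfrak m$ occurs exactly once in column $c+1$, shows that $\mathfrak m+1$ occurs in column $c+1$; being too large to lie in $\y$ or above it and, by hypothesis, not on the lower edge of $\y$, it must be the label of the box directly below $\y$, whereupon row semistandardness applied to the box directly below $\x$, together with $\mathfrak m<\mathfrak b$, forces $\mathfrak b=\mathfrak m+1$. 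So in all cases $\mathfrak b=\mathfrak m+1$, i.e.\ $\mathfrak m+1$ occurs in column $c$ (just below $\x$). I would then argue that $\mathfrak m$ does \emph{not} occur in column $c$ at all: it cannot lie below $\x$ (everything there is $\geq\mathfrak b=\mathfrak m+1$); it cannot lie on the upper edge $U$ of $\x$ (else $\max U\geq\mathfrak m\geq\mathfrak r$, but goodness condition (c) gives $\mathfrak r\geq\max U$, so $\mathfrak m=\mathfrak r=\max U$, which would make swap (III) applicable, contradicting that we are in case (IV)); and it cannot lie strictly above $U$, since column-strictness of $T$ makes the box directly above $\x$ have value $\leq\mathfrak m$, the value $\mathfrak m$ being excluded there because it would force the box directly above $\y$ to have value that is $\geq\mathfrak m$ yet $<\mathfrak r$, contradicting $\mathfrak m\geq\mathfrak r$, and cells higher up in column $c$ are strictly smaller still. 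Consequently $\mathcal N_{c,\mathfrak m}^{T}=\mathcal N_{c+1,\mathfrak m}^{T}$ while $\mathcal N_{c,\mathfrak m+1}^{T}\geq\mathcal N_{c+1,\mathfrak m+1}^{T}+1$ (the extra copy being $\mathfrak b$); feeding these into the lattice inequality $\mathcal N_{c,\mathfrak m}^{T}\geq\mathcal N_{c,\mathfrak m+1}^{T}$ of $T$ yields $\mathcal N_{c+1,\mathfrak m}^{T}\geq\mathcal N_{c+1,\mathfrak m+1}^{T}+1$, contradicting our equality assumption and finishing the proof.

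The routine-looking but genuinely essential step is this last localization — determining exactly where $\mathfrak m$ and $\mathfrak m+1$ may appear around $\x$ and $\y$. This is where the hypotheses stronger than plain semistandardness actually enter: goodness condition (c), column-strictness, the precise content of each clause of the definition of (IV), and of course lattice-ness of $T$. I expect the fussiest point to be verifying the ``no box directly above $\x$ equals $\mathfrak m$'' subcase by way of the box directly above $\y$.
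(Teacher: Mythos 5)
Your proof is correct and follows essentially the same route as the paper's: swaps (I)--(III) preserve the column multisets, and for swap (IV) the only endangered lattice inequality is the one for the pair $({\mathfrak m},{\mathfrak m}+1)$ in the column of $\y$, which is then shown to be strict in $T$ by locating ${\mathfrak m}+1$ (via the maximality of ${\mathfrak m}$ and the clauses of (IV)) and contradicting latticeness of $T$ one column to the left. The only cosmetic difference is that you rule out the ``(IV)(ii) fails for ${\mathfrak m}+1$'' branch up front using the lattice inequality at the next column, whereas the paper keeps that branch and derives the numerical contradiction directly from it; your localization of ${\mathfrak m}$ and ${\mathfrak m}+1$ around $\x$ is if anything more carefully justified than the paper's.
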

\begin{proof}
    None of the swaps (I), (II) nor (III) can turn a lattice tableau into a non-lattice
tableau, since
in each case the set of labels in each column stays the same.  Therefore,
suppose that a horizontal swap (IV) destroys latticeness.

Consider the local
diagram (\ref{eqn:IV}).  The labels that move from the second column
to the first column (with respect to our local diagram) are
$Z=\{{\mathfrak r},{\mathfrak r}+1,\ldots,{\mathfrak m}\}$.

The violation of latticeness must occur in the second column (and nowhere else in $T'$), since it is the
only column such that the multiset of entries weakly to its right has changed.
%
An offending label
${\mathfrak l}+1$ (i.e., one such that ${\mathcal N}_{\y,{\mathfrak l}+1}^{T'}>{\mathcal N}_{\y,{\mathfrak l}}^{T'}$) is not weakly less than ${\mathfrak n}$ (the neighboring label
of $\bullet$ to  the north) since none of those labels moved. Also,
${\mathfrak l}+1\not\in Z$
since they do not appear in the second column of $T'$.
Moreover ${\mathfrak l}+1\leq {\mathfrak m}+1$
since the labels ${\mathfrak m}+1$ and larger have not moved. Thus the offending label must be ${\mathfrak l}+1={\mathfrak m}+1$,
i.e., ${\mathcal N}_{\y,{\mathfrak m}+1}^{T'}>{\mathcal N}_{\y,{\mathfrak m}}^{T'}$. Hence, there must be a ${\mathfrak m}+1$ in the second column.

We cannot have ${\mathfrak m}+1$ as a box label in the box immediately below $\y$, because then
the box label in the neighbor to the
left would also be ${\mathfrak m}+1$ (other values would violate the prerequisite (IV)(i) or
that $T$ is good).
Since ${\mathfrak m}$ does not already occur in the first column of $T$,
the assumption that $T'$ is not lattice implies $T$
fails the lattice condition for the label ${\mathfrak m}+1$,
at the first column, contrary to
our assumption that $T$ is lattice.

Therefore, ${\mathfrak m}+1$ is on the lower edge of $\y$.  Why
does it not lie in the set $Z$?  The reason must be failure of the
prerequisite (IV)(i) or (IV)(ii).
If it is condition (IV)(i), then there must already be a ${\mathfrak m}+1$ in the first
column, and again we conclude $T$ is not lattice, contrary
to our assumption.  If it violates condition (IV)(ii), then that means
${\mathcal N}_{\y,{\mathfrak m}+1}^T<{\mathcal N}_{\y,{\mathfrak r}}^T$ (since $T$ is lattice). However, since swap (IV) was used, by (IV)(ii) we see
\[{\mathcal N}_{\y,{\mathfrak m}}^T= {\mathcal N}_{\y,{\mathfrak r}}^T>
{\mathcal N}_{\y,{\mathfrak m}+1}^T.\]
Since ${\mathcal N}_{\y,{\mathfrak m}}^{T'}={\mathcal N}_{\y,{\mathfrak m}}^T -1$,
${\mathcal N}_{\y,{\mathfrak m}+1}^{T'}={\mathcal N}_{\y,{\mathfrak m}+1}^{T}$ and all the numbers involved
are integers, we have ${\mathcal N}_{\y,{\mathfrak m}}^{T'}\geq {\mathcal N}_{\y,{\mathfrak m}+1}^{T'}$ and
so ${\mathfrak m}+1$ satisfies the lattice condition in $T'$
after all, a contradiction.\end{proof}

Concluding, we have shown that after each swap we obtain a good and lattice tableau. Moreover,
given such a tableau, exactly one of the swaps (I)-(IV) is applicable. These swaps have the property
of either eliminating the $\bullet$, moving the $\bullet$ strictly east or south, or strictly decreasing
the number of labels southeast of the $\bullet$. Hence after a finite number of steps, each tableau
will have either no $\bullet$ or a single $\bullet$ on an outer corner (which can then be erased). Hence
the ${\tt Eqjdt}$ algorithm is well-defined and terminates as desired.
\qed

\subsection{Proof of Theorem~\ref{thm:AB}}
\label{sec:proofthmab}

Having established the well-definedness of ${\tt Eqjdt}$ in Proposition~\ref{prop:semistandard},
the next proposition is the remaining main step in our proof of Theorem~\ref{thm:AB}.

\begin{Proposition}
\label{prop:weightpres}
Let $T$ be a good and lattice tableau arising in the process of computing
${\tt Eqjdt}$ starting from
a semistandard and lattice tableau. If $T\mapsto T'$ is the
result of one of the swaps (I), (III) or (IV) then ${\tt apwt}(T)={\tt apwt}(T')$.
In the case of the expansion swap (II), if $T\mapsto \beta(\x)T'+T''$ then we have
${\tt apwt}(T)=\beta(\x){\tt apwt}(T')+{\tt apwt}(T'')$.
\end{Proposition}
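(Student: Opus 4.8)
The plan is to verify the weight-preservation claim swap-by-swap, tracking how each of the four local moves changes the multiset of edge labels and the data entering each ${\tt apfactor}$. The key invariant is the closed form \eqref{eqn:apfactorusual}: for an edge label $i$ sitting on the lower edge of a box $\x$ in row $r$, the factor depends only on (a) the Manhattan distance ${\tt Man}(\x)$, (b) the row $r$, (c) the value $i$ itself, and (d) the number of $i$'s strictly to the right of $\x$. So for each swap I would identify precisely which edge labels are created, destroyed, or moved, and for the moved ones check that the four-tuple above changes in a way that leaves $t_{{\tt Man}(\x)} - t_{{\tt Man}(\x)+r-i+1+(\#i\text{'s right of }\x)}$ unchanged. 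The hypothesis that $T$ is good and lattice, and arises during an ${\tt Eqjdt}$ computation, is available throughout, and I would lean on it to rule out the degenerate ``too high'' configurations where ${\tt apwt}$ collapses to $0$ — or rather, to show that either both $T$ and $T'$ have a too-high label (so both weights vanish) or neither does.

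First, swaps (I) and (III): in the vertical swap (I) only box labels move, no edge label is created or destroyed, and — crucially — no edge label changes its row, its hosting box, or the count of equal labels to its right, since the $\bullet$ and the moved box label ${\mathfrak b}$ stay within a single column and ${\mathfrak b}$ is not an edge label. Hence every ${\tt apfactor}$ is literally unchanged and ${\tt apwt}(T)={\tt apwt}(T')$. For the resuscitation swap (III), the label ${\mathfrak r}={\mathfrak u}$ moves off the top edge of $\x$ and lands on the \emph{lower} edge of $\y$; I would check that this is exactly the ``nearly too high becomes legitimately placed'' transition, and compute ${\tt apfactor}({\mathfrak r})$ in its new position, using that ${\tt Man}(\y)={\tt Man}(\x)+1$, that the row index and the count of ${\mathfrak r}$'s to the right shift compatibly, and that ${\mathfrak u}$ being the \emph{largest} top-edge label of $\x$ pins down $r-i+1$. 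The bookkeeping here is the analogue of the ``moving past a box to the right'' compensation already seen in ${\tt factor}$ from Section~1, and I expect it to go through by a short arithmetic identity.

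Next, the expansion swap (II): here $T \mapsto \beta(\x)\,T' + T''$, where in $T'$ the lower-edge label ${\mathfrak b}$ of $\x$ is promoted to the box $\x$ (so it stops being an edge label entirely), and in $T''$ it is moved to the top edge of $\x$ while the $\bullet$ stays. I would factor ${\tt apwt}(T) = {\tt apfactor}_T({\mathfrak b}) \cdot \prod_{i \ne {\mathfrak b}} {\tt apfactor}_T(i)$, observe that for $i \ne {\mathfrak b}$ none of the four defining quantities changes in passing to either $T'$ or $T''$ (the box $\x$ was empty and becomes filled, but this does not alter ${\tt Man}$, rows, or right-counts of the other edge labels), so $\prod_{i\ne{\mathfrak b}}{\tt apfactor}$ is common to all three tableaux. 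It then remains to show ${\tt apfactor}_T({\mathfrak b}) = \beta(\x) + {\tt apfactor}_{T''}({\mathfrak b})$ and that in $T'$ the label ${\mathfrak b}$ contributes nothing new (it is now a box label, in the correct row since $T$ was good, so the product over edge labels of $T'$ is exactly $\prod_{i\ne{\mathfrak b}}{\tt apfactor}$, and the promised coefficient of $T'$ carries the extra $\beta(\x)$). The identity ${\tt apfactor}_T({\mathfrak b}) = \beta(\x) + {\tt apfactor}_{T''}({\mathfrak b})$ is the statement that moving ${\mathfrak b}$ from the lower edge of $\x$ (row $r$) to the upper edge of $\x$ (row $r-1$) decrements the row parameter by $1$, hence decrements the subscript $r-i+1+\cdots$ by $1$, i.e.\ peels off exactly one $\beta$, namely $t_{{\tt Man}(\x)}-t_{{\tt Man}(\x)+1}=\beta(\x)$. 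This telescoping is clean once one checks that the ``$\#$ of ${\mathfrak b}$'s strictly to the right'' is the same in $T$ and $T''$, which holds because nothing to the right of $\x$ moved.

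Finally, the horizontal swap (IV) is the main obstacle, since it can move a whole block $Z=\{{\mathfrak r},\dots,{\mathfrak m}\}$ of labels leftward, relocate all but the top one of them onto the upper edge of $\x$, and send $\bullet$ into $\y$; moreover it interacts with the lattice condition through (IV)(ii), and it is the swap that can create near-badness. Here I would argue as follows. The label ${\mathfrak m}$ becomes a box label in $\x$; since ${\mathfrak m}$ was on the lower edge of $\y$ in row $r$ (say) and $\x$ is one row up and one column left, I must check ${\mathfrak m}$ lands in a legal row — this is where condition (IV)(i), really-goodness, and the lattice hypothesis combine, exactly as in Claim~\ref{claim:IVispossible}, and the outcome is that ${\mathfrak m}$ is either correctly placed or the resulting tableau is nearly bad, either way not ``too high''. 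For the labels ${\mathfrak r},\dots,{\mathfrak m}-1$ that land on the upper edge of $\x$: each was on the lower edge of $\y$ (row $r$), hence in $T$ had ${\tt apfactor} = t_{{\tt Man}(\y)} - t_{{\tt Man}(\y)+r-i+1+(\#\,i\text{'s right of }\y)}$; in $T'$ it is on the \emph{upper} edge of $\x$, i.e.\ the lower edge of the box one row above $\x$, which sits in row $r-2$ relative to the convention, at Manhattan distance ${\tt Man}(\x)-1 = {\tt Man}(\y)-2$, and condition (IV)(ii) guarantees the count of $i$'s strictly to the right is shifted by exactly the right amount (the $i$'s in $\y$'s column move into $\x$'s column, so one $i$ that was ``not strictly right of $\y$'' becomes ``strictly right of $\x$'' for every label in $Z'$, and (IV)(ii) forces these counts to be uniform across the block). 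I would set up the arithmetic so that the net change in the subscript is $(-1)$ (from ${\tt Man}$) $+(-1)$ (from the row dropping) $+(+1)$ each is offset — and conclude each such ${\tt apfactor}$ is individually unchanged. For every edge label outside $Z$: (IV)(ii) is precisely the condition ensuring their right-counts are unaffected, their rows and hosting boxes are unchanged, so their factors are fixed. Thus ${\tt apwt}(T)={\tt apwt}(T')$. The hard part, which I would isolate as a lemma, is the exact ledger of the ${\mathfrak m}$-term and the $Z'$-terms: verifying that the three competing $\pm 1$ adjustments to the subscript of $t$ — Manhattan distance, row index, and right-multiplicity — cancel, using (IV)(i)--(iii) and latticeness, with the ${\mathfrak m}$ contribution disappearing because ${\mathfrak m}$ becomes a (legally placed) box label while the count of ${\mathfrak r}$'s to the right picks up the slack. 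Once this ledger is checked, all four cases combine to give the Proposition.
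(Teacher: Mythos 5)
Your overall architecture --- a swap-by-swap case analysis, tracking for each edge label its hosting box, Manhattan distance, row, and the count of equal labels strictly to its right, with the telescoping identity ${\tt apfactor}_T({\mathfrak b})=\beta(\x)+{\tt apfactor}_{T''}({\mathfrak b})$ at the heart of swap (II) --- is exactly the paper's. But two parts of the proposal have genuine problems.

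First, the ledger for the horizontal swap (IV) does not work as written. The set of edge-label \emph{values} changes under this swap: in $T$ the moved edge labels are ${\mathfrak r}+1,\dots,{\mathfrak m}$ (on the lower edge of $\y$, with ${\mathfrak r}$ a box label of $\y$), while in $T'$ they are ${\mathfrak r},\dots,{\mathfrak m}-1$ (on the upper edge of $\x$, with ${\mathfrak m}$ now a box label of $\x$). So there is no label-to-same-label matching of edge labels, and the correct weight-preserving correspondence is the shift ${\tt apfactor}_{T'}({\mathfrak l})={\tt apfactor}_T({\mathfrak l}+1)$ for ${\mathfrak l}={\mathfrak r},\dots,{\mathfrak m}-1$. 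Your arithmetic also carries a sign error: the upper edge of $\x$ is the lower edge of the box above $\x$, which sits in row $r-1$ at Manhattan distance ${\tt Man}(\x)+1={\tt Man}(\y)$, not at ${\tt Man}(\x)-1={\tt Man}(\y)-2$ in row $r-2$ (going up a row \emph{increases} the Manhattan distance). With the correct conventions the leading subscripts of the two binomials agree on the nose, the drop of one in the row cancels against the drop of one in the label value, and what remains is precisely the count identity ${\mathcal N}_{\y,{\mathfrak l}+1}^T={\mathcal N}_{\x,{\mathfrak l}}^{T'}$, which follows from (IV)(i) and (IV)(ii). As written, your version would find the first terms $t_{{\tt Man}}$ of the two factors disagreeing and could not close.

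Second, the vanishing cases. Your guiding dichotomy --- either both tableaux have a too-high label or neither does --- fails exactly where it matters. In the expansion swap (II) it can happen that $T$ has no too-high label but the ${\mathfrak b}$ placed on the upper edge in $T''$ is too high (it lands on the upper edge of a box in row ${\mathfrak b}$ and cannot be resuscitated); then ${\tt apwt}(T'')=0$ by definition, and one must prove separately that ${\tt apfactor}_T({\mathfrak b})=\beta(\x)$ exactly, which the paper does by showing that $\y$ then has no box label and no ${\mathfrak b}$ lies strictly to the right of $\x$. Likewise in swap (I) one must rule out the moved box label ${\mathfrak b}$ becoming too high (the paper uses latticeness: a too-high ${\mathfrak b}$ in $T'$ would force a ${\mathfrak b}-1$ already too high in $T$), and in swaps (III) and (IV) one must rule out a too-high edge label becoming merely nearly too high, since a subsequent resuscitation would then revive a label whose factor was supposed to be zero. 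These checks occupy most of the paper's proof and are not supplied by your general remarks.
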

\begin{proof}
We analyze each of the swaps (I)-(IV) in turn:

\noindent
\emph{Vertical swap (I):} Only the box label ${\mathfrak b}$ moves (up by one square). Hence if any label
was too high in $T$, it will also be too high in $T'$. So we may assume no label is too high in $T$. In addition, since
we use (I), no labels of $T$ are even nearly too high. Hence no labels in $T'$ other than perhaps ${\mathfrak b}$ can be
even nearly too high.
Thus, if ${\mathfrak b}$ is not too high in $T'$, then
the computation of each ${\tt apfactor}$ will be the same in $T$ and $T'$.

Suppose ${\mathfrak b}$ becomes too high in $T'$. Since the swap does not
destroy the lattice or goodness properties, there must be some $\mathfrak b-1$ to the
right of the $\mathfrak b$, which must therefore be strictly higher than the
new position of the $\mathfrak b$.  But this implies that the $\mathfrak b-1$
was too high in $T$, contrary to our assumption.


\excise{
Since $T$ is good and we used (I) then $T'$ is really good (by the proof of Proposition~\ref{prop:semistandard}). Thus, in $T'$
all labels weakly to the right of the ${\mathfrak b}$
(in box $x$ of $T'$) that satisfy
${\mathfrak l}<{\mathfrak b}$ are actually strictly north.
However $T$ is lattice implies $T'$ is lattice.
So, $T'$ being both lattice and good implies that some ${\mathfrak l}$ must
appear in a strictly higher row than some ${\mathfrak l}+1$ for each ${\mathfrak l}=1,2,\ldots,{\mathfrak b-1}$. This requires at least ${\mathfrak b}-1$ rows strictly
above the ${\mathfrak b}$ in $T'$, but there are only
${\mathfrak b}-2$ such rows, a contradiction. Hence ${\mathfrak b}$ is not too high in
$T'$.}

Since the edge labels are in the same positions in $T$ and $T'$, it now follows
that ${\tt apwt}(T)={\tt apwt}(T')$, as desired.

\noindent
\emph{Expansion swap (II):} Recall $T'$ is the tableau obtained by moving ${\mathfrak b}$ into the box $\x$, ``emitting
the weight'' $\beta(\x)$, whereas $T''$ is the tableau obtained by ${\mathfrak b}$ ``jumping over'' $\x$. Thus, if $T$ has any
labels that are too high, this will be true of both $T'$ and $T''$, in which case
\[0={\tt apwt}(T)=\beta(\x){\tt apwt}(T')+{\tt apwt}(T'')=\beta(\x)\cdot 0+0,\]
as desired. Hence we may assume no labels of $T$ are too high.

\noindent
{\bf Case 1:} \emph{The ${\mathfrak b}$ in $T''$ is not too high:} Note that the ${\mathfrak b}$ in $T'$ is also
not too high: we could only have $\mathfrak b$ too high in $T'$ if
$\mathfrak b$ was at the top edge of the box in row ${\mathfrak b}$ in $T$. However, since it was not resuscitated, it
would have been too high in $T$, a contradiction. Thus the highest ${\mathfrak b}$ can be in $T'$ is row ${\mathfrak b}$.
The ${\tt apfactor}$ of all edge labels other than the ${\mathfrak b}$ are the same in $T, T'$ and $T''$. (No label could
become nearly too high
in $T''$ except possibly ${\mathfrak b}$.)
So, it remains to prove that
\begin{equation}
\label{eqn:lastfactor}
{\tt apfactor}_T({\mathfrak b})=\beta(\x)+{\tt apfactor}_{T''}({\mathfrak b})
\end{equation}

Since the box above ${\mathfrak b}$ in
$T''$ has Manhattan distance ${\tt Man}(\x)+1$, we have by (\ref{eqn:apfactorusual}) that
\[{\tt apfactor}_{T''}({\mathfrak b})=t_{{\tt Man}(\x)+1}-t_{{\tt Man}(\x)+1+(r-1)-{\mathfrak b}+1+\mbox{$\#$ of ${\mathfrak b}$'s strictly to the right
 of $\x$ in $T''$}}.\]
But the number of ${\mathfrak b}$'s strictly to the right of our ${\mathfrak b}$ in $T''$ equals the number of
${\mathfrak b}$'s strictly to the right of ${\mathfrak b}$ in $T$. Thus, since $\beta(\x)=t_{{\tt Man}(\x)}-t_{{\tt Man}(\x)+1}$,
(\ref{eqn:lastfactor}) follows immediately.


\noindent
{\bf Case 2}: \emph{The ${\mathfrak b}$ in $T''$ is too high:}
Since ${\mathfrak b}$ is not too high in $T$,
${\mathfrak b}$ in $T''$
is on the upper edge of a box in row ${\mathfrak b}$. This label is too high
because it cannot be resuscitated. Consider the box $\y$ to the immediate right of $\x$ in
$T$ (or $T''$). For us to have done an expansion step $T\to T''$, if there is a label ${\mathfrak r}$ in $\y$ of $T$, it
must satisfy ${\mathfrak r}\geq {\mathfrak b}$. However, if ${\mathfrak r}>{\mathfrak b}$ then since
$\y$ is in row ${\mathfrak b}$ we can conclude ${\mathfrak r}$ is too high in $T$, a contradiction of
our assumption about $T$.

Thus $\y$ either has no box label (explaining why we can't do a resuscitation) or $\y$ contains
${\mathfrak b}$. If the former is true then ${\tt apfactor}_T({\mathfrak b})=\beta(\x)$ since there can be no
${\mathfrak b}$'s strictly to the right of $\x$ (by the goodness and highness assumptions). So we are done in this situation.
Hence assume $\y$ contains ${\mathfrak b}$. Thus the resuscitation swap (III) was possible after all in $T''$, contradicting our assumption that the ${\mathfrak b}$ in $T''$ is too high.

\noindent
\emph{Resuscitation swap (III):}
Only two labels move, namely ${\mathfrak u}={\mathfrak r}$ and ${\mathfrak r}$ go downwards.
Suppose a label ${\mathfrak n}$ on the top edge of box $\y$ in $T$ is too high but
becomes only nearly too high in $T'$. Hence $\y$ must be in row ${\mathfrak n}$. But
since swap (III) was applied, by semistandardness, ${\mathfrak n}<{\mathfrak u}={\mathfrak r}$.
Hence ${\mathfrak u}={\mathfrak r}$ must be too high in both $T$ and $T'$ and thus
${\tt apwt}(T)={\tt apwt}(T')=0$. Thus we may assume this does not happen.
It is therefore clear that no other labels, except possibly ${\mathfrak u}={\mathfrak r}$ and
${\mathfrak r}$ can be too high in $T$ and become not too high in $T'$.
Hence we assume all labels of $T$ except possibly ${\mathfrak u}={\mathfrak r}$ and ${\mathfrak r}$
are not too high.

If ${\mathfrak u}={\mathfrak r}$ is on the upper edge of a box in row ${\mathfrak u}$ then it must be only nearly
too high in $T$, since by assumption we can apply a resuscitation swap (III) to bring that label downwards. If ${\mathfrak u}={\mathfrak r}$ were any higher in $T$ (and thus too high), it would be still too
high in $T'$. Thus we may assume it was not too high in $T$.
Thus ${\mathfrak r}$ must be not too high.
Summarizing, we can assume that no label of $T$ is too high.

Since ${\mathfrak u}={\mathfrak r}$ and ${\mathfrak r}$ move down when $T\to T'$,  no labels of
$T'$ are too high. Since the set of labels in each column is the same, it follows that
${\tt apwt}(T)={\tt apwt}(T')$ provided that
\begin{equation}
\label{eqn:resuscitationtodo}
{\tt apfactor}_T({\mathfrak u})={\tt apfactor}_{T'}({\mathfrak r}).
\end{equation}
(Recall we argued above that no labels other than ${\mathfrak u}$ can be
nearly too high in $T$ or $T'$.)

There is a box above $\x$, say ${\sf w}$.
By (\ref{eqn:apfactorusual}) we have
\[{\tt apfactor}_T({\mathfrak u})=t_{{\tt Man}({\sf w})}-t_{{\tt Man}({\sf w})+{\tt row}({\sf w}) -{\mathfrak u}+1+\Delta_{\x,{\mathfrak u}}^{T}},\]
where $\Delta_{\x,{\mathfrak u}}^T$ is the number of ${\mathfrak u}$'s strictly to the right of $\x$ in $T$.

Also by (\ref{eqn:apfactorusual})
\[{\tt apfactor}_{T'}({\mathfrak r})=t_{{\tt Man}({\y})}-t_{{\tt Man}({\y})+
{\tt row}({\y})-{\mathfrak r}+1+\Delta_{\y,{\mathfrak r}}^{T'}}.\]
where $\Delta_{\y,{\mathfrak r}}^{T'}$ is the number of ${\mathfrak r}$'s strictly to the right of
${\y}$ in $T'$.

Noting that
\begin{eqnarray}\nonumber
{\tt Man}({\sf w}) & = & {\tt Man}({\y})\\ \nonumber
{\tt row}({\sf w}) & = & {\tt row}({\y})-1\\ \nonumber
\Delta_{\y,{\mathfrak r}}^{T'} & = & \Delta_{\x,{\mathfrak u}={\mathfrak r}}^{T}-1
\end{eqnarray}
we conclude (\ref{eqn:resuscitationtodo}) is true.


\noindent
\emph{Horizontal swap (IV):} If any label of $T$ is too high then since labels are
moving weakly upwards, that label will also be too high in $T'$. Thus, we may assume that no label
of $T$ is too high. We did not resuscitate ${\mathfrak u}=\max U$, nor labels on the upper edge of $\y$. Hence labels on
these edges are not even nearly too high in $T$.

Recall $Z=\{{\mathfrak r},{\mathfrak r}+1,\ldots,{\mathfrak m}\}$ are the labels that moved during the swap (IV). If ${\mathfrak m}={\mathfrak r}$, then ${\mathfrak r}$ is the only label that moves, and moreover
it simply moves directly to the left from box $\y$ to box $\x$. So if ${\mathfrak r}$ was not
too high in $T$, nor is it too high in $T'$. Next suppose ${\mathfrak m}>{\mathfrak r}$. Now ${\mathfrak r}$
moves into the upper edge of $\x$. Since ${\mathfrak r}+1\in Y$ and ${\mathfrak r}+1$ is not too high in $T$, we see $\x$ and $\y$ are in row $R\geq {\mathfrak r}+1$. Similarly, in fact $\x$ and $\y$ must be in row $R\geq {\mathfrak m}$, since otherwise ${\mathfrak m}$ would be too high.
Consequently, in $T'$, all of the labels in $Z$ are still not too high.

We also need to rule out the possibility that an edge-label which is too high in $T$ could become
nearly too high in $T'$ (because the next step after $T'$ would be a swap (III) resuscitating it).
Suppose locally the picture looks like
\begin{equation}
\label{eqn:depicted}
\begin{picture}(250,25)
\put(28,8){$T=$}
\put(50,5){$\tableau{{\bullet}&{\mathfrak r}&{\overline{\mathfrak r}}}$}
\put(74,0){$Y$}
\put(76,20){$\overline{\mathfrak r}$}
\put(55,20){$U$}

\put(120,10){$\mapsto$}
\put(150,5){\tableau{{\mathfrak m}&{\bullet}&{\overline{\mathfrak r}}}}
\put(174,0){$Y'$}
\put(176,20){$\overline{\mathfrak r}$}
\put(155,20){$W$}
\put(210,8){$=T'$}
\end{picture}
\end{equation}
If $T'$ is really good then ${\mathfrak m}\leq {\overline{\mathfrak r}}$, but ${\mathfrak m}>{\overline{\mathfrak r}}$ by the vertical semistandardness of $T$, a contradiction.
Otherwise if $T'$ is nearly bad, then the next swap is (IV) not (III), by Claim~\ref{claim:cannotbe}. Thus, again $T'$ cannot be of the form in (\ref{eqn:depicted}).

Consider any edge label $i$ that did not change in the swap $T\to T'$. Note ${\tt apfactor}(i)$
and ${\tt apfactor}_{T'}(i)$ could only differ if the number of $i$'s strictly to the right of
the given $i$ changes as we compare $T$ and $T'$. However, there could not be a nonzero change,
by the definition of the swap (IV).

We now establish a weight-preserving correspondence between the edge labels of
$T$ which moved and the edge labels of $T'$ which resulted from the move;
specifically,
\[{\tt apfactor}_{T'}({\mathfrak l})={\tt apfactor}_T({\mathfrak l}+1)\]
for ${\mathfrak l}={\mathfrak r},{\mathfrak r}+1,\ldots,{\mathfrak m}-1$, using (\ref{eqn:apfactorusual}).
To see this, first note that in each case ${\mathfrak l}$ is in a one higher row in $T'$ than in $T$. Therefore it remains to show
\begin{eqnarray}
\label{eqn:finalequalityIV}
{\mathcal N}_{\y,{\mathfrak l}+1}^T={\mathcal N}_{\x,{\mathfrak l}}^{T'}.
\end{eqnarray}

Now by (IV)(ii) we have
\begin{equation}
\label{eqn:weconclude1}
{\mathcal N}_{\y,{\mathfrak r}}^T={\mathcal N}_{\y,{\mathfrak r}+1}^T=\cdots={\mathcal N}_{\y,{\mathfrak m}}^T.
\end{equation}
Finally, by (IV)(i), we know there were no ${\mathfrak l}$'s in the column of $\x$ in $T$, so
\begin{equation}
\label{eqn:weconclude2}
{\mathcal N}_{\x,{\mathfrak l}}^{T'}={\mathcal N}_{\y,{\mathfrak l}}^T.
\end{equation}
Now (\ref{eqn:weconclude1}) and (\ref{eqn:weconclude2}) combined
immediately gives (\ref{eqn:finalequalityIV}).
\end{proof}

\noindent
\emph{Conclusion of the proof of Theorem~\ref{thm:AB}:}
By Proposition~\ref{prop:semistandard}, any tableau in ${\tt Eqrect}(T)$ (under any rectification order) is semistandard and lattice. The only regular, semistandard, lattice tableaux of straight shape are the highest weight tableaux.
Since $T$ is lattice then ${\tt Eqrect}(T)$ (with respect to any order) will be a sum of tableaux that are
lattice and which have the same multiset of labels as $T$.
Hence the only regular tableau that can appear is $S_\mu$. Any irregular $U$ that appears in ${\tt Eqrect}(T)$ has ${\tt apwt}(U)=0$. Hence,
by Proposition~\ref{prop:weightpres},
  the coefficient of $S_{\mu}$ in ${\tt Eqrect}(T)$ is ${\tt apwt}(T)$ and the theorem holds. \qed

\section{Equivariant jeu de taquin computes Schubert calculus}

Let
\[D_{\lambda,\mu}^{\nu}=\sum_T [S_{\mu}] \ {\tt Eqrect}(T)=\sum_T {\tt apwt}(T)\]
where the sums are over all lattice and semistandard tableaux $T$
of shape $\nu/\lambda$ and content $\mu$ such that ${\tt Eqrect}(T)$ is ${\mu}$-highest weight. (By the arguments of Section~2, the last condition can be replaced
by ${\tt apwt}(T)\neq 0$.) Also, here
$[S_{\mu}] \ {\tt Eqrect}(T)$ means the coefficient
of $S_{\mu}$ under some (or, as we proved in Theorem~\ref{thm:AB}, any) rectification order.
(The second equality is Theorem~\ref{thm:AB}(III).)

We now connect these polynomials to the Schubert structure coefficients:
\begin{Theorem}
\label{thm:DequalsC}
$D_{\lambda,\mu}^{\nu}=C_{\lambda,\mu}^{\nu}$
\end{Theorem}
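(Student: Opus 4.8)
The plan is to show that the polynomials $D_{\lambda,\mu}^\nu$ satisfy the same defining recursion and initial conditions as the equivariant Schubert structure coefficients $C_{\lambda,\mu}^\nu$. Graham's characterization (together with Knutson--Tao) says that the $C_{\lambda,\mu}^\nu$ are uniquely determined by: (a) $C_{\lambda,\emptyset}^\nu=\delta_{\lambda,\nu}$; (b) $C_{\emptyset,\mu}^\nu=\delta_{\mu,\nu}$; (c) the ``Pieri/Chevalley'' specialization recursion expressing $(t_{?}-t_?)\cdot\sigma_\lambda$ or $\sigma_{(1)}\cdot\sigma_\lambda$ in the Schubert basis; and (d) an appropriate triangularity/degree bound ($C_{\lambda,\mu}^\nu=0$ unless $\lambda,\mu\subseteq\nu$, and $\deg C_{\lambda,\mu}^\nu=|\lambda|+|\mu|-|\nu|$). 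First I would verify the easy boundary cases for $D$: when $\mu=\emptyset$ there are no edge labels and the only semistandard lattice tableau of shape $\nu/\lambda$ with empty content forces $\nu=\lambda$, giving $D_{\lambda,\emptyset}^\nu=\delta_{\lambda,\nu}$; the case $\lambda=\emptyset$ is handled by the fact that a straight-shape lattice semistandard tableau of content $\mu$ with ${\tt apwt}\neq 0$ must be $S_\mu$ itself.

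Next, and this is the crux, I would establish the Molev--Sagan-type recursion for $D_{\lambda,\mu}^\nu$ purely combinatorially, using the flexibility in the choice of rectification order granted by Theorem~\ref{thm:AB}. The standard approach (cf.\ Molev--Sagan, and the Thomas--Yong minuscule paper) is: choose the rectification order on $\nu/\lambda$ so that one first slides into a fixed inner corner $\x$ of $\lambda$; classify the resulting tableaux in ${\tt Eqjdt}_\x(T)$ according to where the $\bullet$ exits and whether an expansion swap (II) ``emitted'' a factor $\beta(\x)$; and match the three types of contributions (no emission / emission of $\beta(\x)$ / boundary shift of $\nu$ or $\lambda$) with the three terms of the equivariant Pieri recursion. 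Concretely, summing $D_{\lambda,\mu}^\nu$ against the recursion reduces to a sign-reversing/bijective argument on the set of lattice semistandard tableaux, where the ${\tt apwt}$ bookkeeping exactly mirrors the $t_i-t_j$ coefficients appearing in Knutson--Tao's recursion. Here one uses Proposition~\ref{prop:weightpres} to guarantee that the weight is carried correctly through every intermediate swap, so that the total weight on each side of the recursion agrees term by term.

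Finally I would check the degree/support conditions: $D_{\lambda,\mu}^\nu=0$ unless $\lambda\subseteq\nu$ and $\mu\subseteq\Lambda$ (no lattice filling otherwise), and each ${\tt apfactor}(i)$ has degree $1$ while the number of edge labels of $T$ is exactly $|\lambda|+|\mu|-|\nu|$ (each box of $\nu/\lambda$ carries one box label, contributing $|\nu/\lambda|=|\nu|-|\lambda|$ to the content, so the remaining $|\mu|-(|\nu|-|\lambda|)$ occurrences of the content are edge labels), giving $\deg D_{\lambda,\mu}^\nu=|\lambda|+|\mu|-|\nu|$ as required. By uniqueness of the solution to the recursion with these boundary and degree constraints, $D_{\lambda,\mu}^\nu=C_{\lambda,\mu}^\nu$.

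I expect the main obstacle to be the recursion step: one must pick the rectification order cleverly (sliding into a single inner corner of $\lambda$ first, then recursing on the smaller skew shape), and then carefully partition the output of ${\tt Eqjdt}_\x$ so that the combinatorial classification of swaps of types (I)--(IV) aligns with the algebraic terms of the Knutson--Tao recursion, including the ``diagonal'' term with its $t_i-t_j$ coefficient which should correspond precisely to the expansion swap (II) emitting $\beta(\x)$ or to the ${\tt apfactor}$ of a newly created edge label. Getting this correspondence to respect ${\tt apwt}$ on the nose — rather than merely up to the recursion — is where the real work lies; the appeal to Theorem~\ref{thm:AB}(II)--(III) to change rectification order freely is what makes it tractable.
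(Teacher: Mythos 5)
Your strategy for the recursion step is essentially the paper's: fix the added corner $\x=\lambda^{+}/\lambda$, apply ${\tt Eqjdt}_{\x}$ first (legitimate by Theorem~\ref{thm:AB}), and partition the output into the unique shape-$\nu^{-}/\lambda$ tableau (coefficient $1$) and the expansion-swap tableaux (coefficient $\beta(\y)$ for $\y\in\nu/\lambda$), matching the two sides of the Molev--Sagan recurrence (Lemma~\ref{lemma:therecC}). You also correctly identify where the real work lies: one must show every $W$ on the right-hand side arises from a \emph{unique} pair $(\lambda^{+},T)$, which the paper accomplishes by a reverse-slide analysis (Claims~\ref{claim:good}--\ref{claim:isnearlybad}); note there are no signs to cancel here, only a weight-preserving matching.

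The genuine gap is in your base case. The recurrence (\ref{eqn:therec}) is solved by induction on $|\nu|-|\lambda|$: one divides by ${\tt wt}(\nu/\lambda)$, which vanishes precisely when $\nu=\lambda$, so the induction bottoms out at the diagonal values $D_{\lambda,\mu}^{\lambda}$. These are nontrivial degree-$|\mu|$ polynomials (restrictions of $\sigma_{\mu}$ to the $T$-fixed point indexed by $\lambda$), not Kronecker deltas. Your proposed boundary data $C_{\lambda,\emptyset}^{\nu}=\delta_{\lambda,\nu}$ and $C_{\emptyset,\mu}^{\nu}=\delta_{\mu,\nu}$ cannot feed this induction: the recurrence never alters $\mu$, and setting $\nu=\lambda$ in (\ref{eqn:therec}) yields only $0=\sum_{\lambda^{-}}C_{\lambda,\mu}^{\lambda^{-}}$, which says nothing about $C_{\lambda,\mu}^{\lambda}$; the degree bound does not supply the missing data either. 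This is exactly why the paper devotes Proposition~\ref{prop:DequalsCspecial} and all of Section~3.1 to proving $D_{\lambda,\mu}^{\lambda}=C_{\lambda,\mu}^{\lambda}$, via a weight-preserving bijection between the shape-$\lambda/\lambda$ edge-labeled tableaux and the ordinary semistandard tableaux computing the specialization $\overline{{\mathfrak S}_{v'}(t_{w'(1)},\ldots,t_{w'(n)};t_1,\ldots,t_n)}$ of the double Schubert polynomial. Some argument of this kind (or a second recurrence in $\mu$ \`a la Knutson--Tao reducing the diagonal to $C_{\mu,\mu}^{\mu}$) is indispensable and is absent from your proposal.
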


The ${\tt Eqrect}$ method of computing $D_{\lambda,\mu}^{\nu}$ generates each monomial
of this polynomial (as expressed in the variables $\beta_i$) separately. This is somewhat
different than other rules for these polynomials, which express the answer (as the ${\tt apwt}$
computation does) by combining many of these monomials into one.

Our proof follows the same general strategy used in \cite{Knutson.Tao}.
However the technical details are, naturally, significantly different.
Although we can state the rule $D_{\lambda,\mu}^{\nu}=\sum_T {\tt apwt}(T)$ without
development of ${\tt Eqjdt}$, our proof relies on this construction.

\begin{Proposition}
\label{prop:DequalsCspecial}
$D_{\lambda,\mu}^{\lambda}=C_{\lambda,\mu}^{\lambda}$.
\end{Proposition}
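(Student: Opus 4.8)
The plan is to identify $C_{\lambda,\mu}^{\lambda}$ and $D_{\lambda,\mu}^{\lambda}$ directly, exploiting the fact that the ``$\nu=\lambda$'' case is the diagonal, where equivariant Schubert calculus is governed by the well-known \emph{restriction} formula. Recall that $C_{\lambda,\mu}^{\lambda}$ is the coefficient of $\sigma_\lambda$ in $\sigma_\lambda\cdot\sigma_\mu$, and that its value is forced: restricting to the $T$-fixed point $e_\lambda\in X$, one has $\sigma_\mu|_{e_\lambda}=C_{\lambda,\mu}^{\lambda}$ plus contributions of classes $\sigma_\nu$ with $\nu\supsetneq\lambda$, all of which vanish at $e_\lambda$; hence $C_{\lambda,\mu}^{\lambda}=\sigma_\mu|_{e_\lambda}$, a product of factors $t_i-t_j$ determined by the boxes of $\mu$ sitting inside $\lambda$. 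Concretely, it is the standard ``excited Young diagram''-free term, i.e. $\sigma_\mu|_{e_\lambda}$ is nonzero exactly when $\mu\subseteq\lambda$, and then equals $\prod_{\x\in\mu}(t_{a(\x)}-t_{b(\x)})$ for the appropriate content-style statistic. I would state this as a small lemma (citing \cite{Knutson.Tao} or a standard reference for the fixed-point restriction of Schubert classes), since it is exactly the ``diagonal'' input used in \cite{Knutson.Tao}'s own induction.

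On the combinatorial side I would show that when $\nu=\lambda$, there is essentially a unique lattice semistandard tableau $T$ of shape $\lambda/\lambda$ (the empty straight-skew shape) of content $\mu$ that contributes, and that its ${\tt apwt}$ matches the restriction formula. The subtlety is that $\nu/\lambda=\lambda/\lambda$ has \emph{no boxes} but does have horizontal edges — namely the edges of $\lambda$ itself — so an equivariant filling of $\lambda/\lambda$ is precisely an assignment of subsets of $\{1,\dots,\ell\}$ to the horizontal edges of $\lambda$, with no box labels at all. The content-$\mu$ lattice condition, together with semistandardness across edges (edge labels in row $r$'s southern/northern edge must respect the vacuous box constraints, which here only couples adjacent edge-rows through latticeness), pins down $T$ to a single tableau: put the label $i$ on the horizontal edges forced by reading $\mu$ along the rim of $\lambda$. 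Then ${\tt Eqrect}(T)$ must be $\mu$-highest weight (Theorem~\ref{thm:AB}(I)), and ${\tt apwt}(T)=\prod_i {\tt apfactor}(i)$ is computed directly from \eqref{eqn:apfactorusual}: each edge label $i$ sitting on the lower edge of a box in row $r$ contributes $t_{{\tt Man}(\x)}-t_{{\tt Man}(\x)+r-i+1+(\#\ i\text{'s to the right})}$, and I would check term-by-term that this product equals $\sigma_\mu|_{e_\lambda}$.

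The main obstacle, and where I would spend the most care, is the bookkeeping that matches the Manhattan-distance indices in ${\tt apfactor}(i)$ with the subscripts $a(\x),b(\x)$ appearing in the fixed-point restriction $\sigma_\mu|_{e_\lambda}$; this is a purely combinatorial identity about how $\mu$ sits inside the rim of $\lambda$, but it requires fixing conventions carefully (English notation, the $F^{n-k+i-\lambda_i}$ flag convention, and the direction in which rows/columns are counted) so that the ``$r-i+1+\#$ to the right'' shift lines up with the column indices read off from $\lambda$. A secondary obstacle is verifying \emph{uniqueness} of the contributing $T$ — i.e. that no other placement of edge labels on $\lambda/\lambda$ is simultaneously lattice, semistandard, of content $\mu$, and has ${\tt apwt}\neq 0$; here I would argue that the ``too high'' vanishing conditions (i),(ii) in the definition of ${\tt apwt}$ kill every alternative, since any label $i$ placed above row $i$ on an edge is too high, forcing the greedy bottom-justified placement. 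Once these two points are settled, $D_{\lambda,\mu}^{\lambda}={\tt apwt}(T)=\sigma_\mu|_{e_\lambda}=C_{\lambda,\mu}^{\lambda}$ follows immediately.
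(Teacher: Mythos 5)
Your starting point is sound and is essentially the paper's: the identity $C_{\lambda,\mu}^{\lambda}=\sigma_\mu|_{e_\lambda}$ (phrased in the paper via a specialization of the double Schubert polynomial ${\mathfrak S}_{v'}$) reduces the proposition to matching a localization against the combinatorial sum $D_{\lambda,\mu}^{\lambda}=\sum_{T}{\tt apwt}(T)$. But there is a genuine gap in both halves of your matching step. First, $\sigma_\mu|_{e_\lambda}$ is \emph{not} a single product $\prod_{\x\in\mu}(t_{a(\x)}-t_{b(\x)})$ in general; it is a \emph{sum} of such products, one for each term of the factorial Schur function evaluation (equivalently, one for each excited Young diagram of $\mu$ inside $\lambda$, or each semistandard tableau of shape $\mu'$ with nonvanishing specialized weight). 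It collapses to a single monomial only in special cases such as $\mu=\lambda$. Second, and correspondingly, there is in general \emph{no} unique contributing tableau of shape $\lambda/\lambda$: the ``too high'' condition only forces a label $i$ to sit at the bottom of a column of height at least $i$, which together with latticeness leaves many admissible placements. Already for $\lambda=(2,2)\subseteq 2\times 2$ and $\mu=(1)$ there are two contributing tableaux (the label $1$ on the bottom edge of either column), with ${\tt apfactor}$'s $t_1-t_3$ and $t_2-t_4$, whose \emph{sum} is $C_{(2,2),(1)}^{(2,2)}={\tt wt}((2,2))=\beta_1+2\beta_2+\beta_3$; neither tableau alone gives the answer. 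So the uniqueness argument you propose to ``spend the most care on'' cannot be completed, and the term-by-term check of a single product against a single ${\tt apwt}$ is comparing two objects that are each only one summand of the true quantities.

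What is actually needed — and what the paper supplies — is a weight-preserving \emph{bijection} between the full set ${\mathcal A}$ of lattice semistandard edge-labelings of $\lambda/\lambda$ of content $\mu$ with ${\tt apwt}\neq 0$ and the full set ${\mathcal B}$ of semistandard tableaux of shape $\mu'$ with nonvanishing specialized weight ${\tt SSYTeqwt}$; the paper's map $\phi$ records, for each label $i$ of $T$, the (reverse-indexed) columns in which the $i$'s sit, and places that word into the shape $\mu'$. The Manhattan-distance bookkeeping you flag as the main obstacle does appear, but only as the verification that $\phi$ matches ${\tt apfactor}$ with ${\tt SSYTeqfactor}$ term by term; the real content is the bijection itself (in particular, that latticeness of $T$ corresponds to row-semistandardness of $\phi(T)$, and that the ``too high'' vanishing corresponds to the vanishing of ${\tt SSYTeqwt}$). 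Your proposal as written would need to be restructured around such a bijection to go through.
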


We delay the proof of the above proposition until after the proof of Theorem~\ref{thm:DequalsC}.

For completeness, we restate and prove the following recurrence from \cite[Proposition~3.4]{Molev.Sagan}
and also observed by A.~Okounkov; see also \cite[Proposition~2]{Knutson.Tao}.
\begin{Lemma}
\label{lemma:therecC}
We have
\begin{equation}
\label{eqn:therecC}
\sum_{\lambda^{+}} C_{\lambda^{+},\mu}^{\nu}=C_{\lambda,\mu}^{\nu}\cdot {\tt wt}(\nu/\lambda)
+\sum_{\nu^{-}}C_{\lambda,\mu}^{\nu^{-}}
\end{equation}
where
\begin{itemize}
\item $\lambda^{+}$ is obtained by adding an outer corner to $\lambda$;
\item $\nu^{-}$ is obtained by removing an outer corner of $\nu$; and
\item ${\tt wt}(\nu/\lambda)=\sum_{\x\in \nu/\lambda} \beta(\x)$.
\end{itemize}
\end{Lemma}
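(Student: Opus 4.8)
The plan is to derive \eqref{eqn:therecC} purely formally from the module structure of $H_T(X)$ together with a known recurrence for equivariant Schubert classes under the ``divided difference''/Chevalley-type operator or, more elementarily, from the ``Pieri-type'' relation that multiplication by the equivariant first Chern class of the dual tautological bundle (or equivalently the class of a codimension-one Schubert variety plus a scalar) behaves well on the Schubert basis. Concretely, one writes down the equivariant Chevalley formula: for the Schubert divisor class $\sigma_{\square}$ (the box shape),
\[
\sigma_{\square}\cdot \sigma_\lambda = \left(\sum_{\x\in\Lambda\setminus\lambda'}\beta(\x)-\text{(a fixed constant)}\right)\sigma_\lambda+\sum_{\lambda^+}\sigma_{\lambda^+},
\]
where $\lambda^+$ runs over the shapes obtained by adding one box. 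Here I must be careful about which bundle/divisor is being used so that the equivariant multiplicity matches ${\tt wt}(\nu/\lambda)$ exactly; the cleanest route is to use the restriction-to-fixed-points description $\sigma_\lambda|_\mu$ and the fact that $\sigma_{\square}|_\mu$ is an explicit linear polynomial in the $t_i$.

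The key steps, in order: (1) State the equivariant Chevalley formula on $X=Gr(k,\complexes^n)$ in the Schubert basis, identifying the coefficient of $\sigma_\lambda$ on the right as exactly $\sum_{\x\in\lambda^{+}\!/\lambda \text{ possibilities}}$— more precisely as the ``self-intersection'' coefficient $c_{\square,\lambda}^{\lambda}$, which one checks equals $\sum_{\x} \beta(\x)$ summed over an appropriate set of boxes (this is where the Manhattan-distance convention for $\beta(\x)$ is pinned down). (2) Multiply both sides of this Chevalley relation by $\sigma_\mu$ and expand using \eqref{eqn:eq_expansion} on each side. (3) On the left, $\sigma_\square\cdot\sigma_\lambda\cdot\sigma_\mu = \sum_{\lambda^+}\sigma_{\lambda^+}\sigma_\mu + c_{\square,\lambda}^\lambda\,\sigma_\lambda\sigma_\mu$; expand each product in the Schubert basis and read off the coefficient of a fixed $\sigma_\nu$. (4) On the right, reassociate as $\sigma_\lambda\cdot(\sigma_\square\cdot\sigma_\mu) = \sigma_\lambda\cdot\big(c_{\square,\mu}^\mu\sigma_\mu+\sum_{\mu^+}\sigma_{\mu^+}\big)$ and read off the coefficient of $\sigma_\nu$. (5) Alternatively—and this is the version that produces \eqref{eqn:therecC} with a $\nu^-$ sum rather than a $\mu^+$ sum—apply the Chevalley relation to $\sigma_\nu$ in its ``lowering'' form, or use the symmetry $C_{\lambda,\mu}^\nu$ under the duality $\nu\mapsto\nu^\vee$ to convert an ``adding a box to $\nu$'' statement into a ``removing a box from $\nu$'' statement. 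Equating the two expansions of the coefficient of $\sigma_\nu$ and cancelling the common terms yields precisely
\[
\sum_{\lambda^{+}} C_{\lambda^{+},\mu}^{\nu}=C_{\lambda,\mu}^{\nu}\big(c_{\square,\lambda}^{\lambda}-c_{\square,\nu}^{\nu}\big)+\sum_{\nu^{-}}C_{\lambda,\mu}^{\nu^{-}},
\]
and the final step is the bookkeeping identity $c_{\square,\lambda}^{\lambda}-c_{\square,\nu}^{\nu}={\tt wt}(\nu/\lambda)=\sum_{\x\in\nu/\lambda}\beta(\x)$, which follows because the self-intersection coefficient $c_{\square,\kappa}^{\kappa}$ is, up to the same additive constant, $\sum_{\x\in\kappa}\beta(\x')$ summed over boxes of $\kappa$ (or its complement), so the difference telescopes to a sum over exactly the boxes of $\nu/\lambda$.

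The main obstacle I anticipate is bookkeeping, not conceptual: getting the Chevalley-formula coefficients and the sign/constant conventions to line up so that the leftover scalar is *literally* $\sum_{\x\in\nu/\lambda}\beta(\x)$ with the paper's Manhattan-distance normalization of $\beta(\x)$, and making sure the ``adding an outer corner to $\lambda$'' on the left matches the Chevalley expansion's ``adding a box'' while the ``removing an outer corner of $\nu$'' on the right comes out correctly (this is the step most sensitive to whether one uses $\sigma_\square\cdot\sigma_\lambda$ or the transpose operator, and to the duality $\nu\leftrightarrow\nu^\vee$). Since the lemma is quoted as already appearing in \cite{Molev.Sagan} and \cite{Knutson.Tao}, I would in fact present the shortest self-contained version: invoke the equivariant Chevalley formula as a black box, do the two-term reassociation of $\sigma_\square\sigma_\lambda\sigma_\mu$, and reduce everything to the single scalar identity for $c_{\square,\kappa}^\kappa$, which can be verified directly from the fixed-point restriction formula $\sigma_\square|_\kappa$.
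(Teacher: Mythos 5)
Your approach is essentially the paper's: the paper also proves this by applying the equivariant Pieri (Chevalley) rule $\sigma_{(1)}\cdot\sigma_{\kappa}=\sum_{\kappa^+}\sigma_{\kappa^+}+{\tt wt}(\kappa)\sigma_{\kappa}$ and comparing the two associations of the triple product, with the telescoping ${\tt wt}(\nu)-{\tt wt}(\lambda)={\tt wt}(\nu/\lambda)$ at the end. Two small corrections to your plan: the right comparison is $(\sigma_{(1)}\cdot\sigma_{\lambda})\cdot\sigma_{\mu}$ versus $\sigma_{(1)}\cdot(\sigma_{\lambda}\cdot\sigma_{\mu})$ --- not $\sigma_{\lambda}\cdot(\sigma_{(1)}\cdot\sigma_{\mu})$, which would yield a recurrence in $\mu^{+}$ --- since expanding $\sigma_{(1)}\cdot\sigma_{\nu'}$ for each $\sigma_{\nu'}$ in $\sigma_{\lambda}\sigma_{\mu}$ already produces the $\nu^{-}$ sum with no need for the duality $\nu\mapsto\nu^{\vee}$ or a ``lowering'' operator; and the leftover scalar is $c_{\square,\nu}^{\nu}-c_{\square,\lambda}^{\lambda}={\tt wt}(\nu)-{\tt wt}(\lambda)$, the reverse of the difference you wrote.
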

\begin{proof}
The equivariant Pieri rule states
\begin{equation}
\label{eqn:equivPieri}
\sigma_{(1)}\cdot\sigma_{\lambda}=\sum_{\lambda^+}\sigma_{\lambda^+}+{\tt wt}(\lambda)\sigma_{\lambda}\in H_T(X).
\end{equation}
Equation~(\ref{eqn:equivPieri}) is proved in \cite[Proposition~2]{Knutson.Tao}. To repeat the argument, it
follows from the classical Pieri rule combined with the localization computation
$C_{\lambda,(1)}^{\lambda}={\tt wt}(\lambda)$; this localization computation
is easily recovered from the earlier results discussed in Section~3.1. Hence
\begin{eqnarray}\nonumber
\sigma_{(1)}\cdot(\sigma_{\lambda}\cdot\sigma_{\mu}) & = &  \sigma_{(1)}\cdot\left(\sum_\nu C_{\lambda,\mu}^\nu\sigma_{\nu}\right) =  \sum_{\nu}C_{\lambda,\mu}^\nu \sigma_{(1)}\cdot\sigma_{\nu}\\ \nonumber
& = & \sum_{\nu} C_{\lambda,\mu}^{\nu}{\tt wt}(\nu)\sigma_{\nu}+ \sum_\nu C_{\lambda,\mu}^{\nu} \sum_{\nu^+}\sigma_{\nu^+}.\nonumber
\end{eqnarray}
Also,
\begin{eqnarray}\nonumber
(\sigma_{(1)}\cdot\sigma_{\lambda})\cdot\sigma_{\mu} & = & \left({\tt wt}(\lambda)\sigma_{\lambda}+\sum_{\lambda^+}\sigma_{\lambda^+}\right)\cdot\sigma_{\mu}= {\tt wt}(\lambda)\sigma_{\lambda}\cdot \sigma_{\mu} +\sum_{\lambda^+}\sigma_{\lambda^+}\cdot\sigma_{\mu}\\ \nonumber
& = & {\tt wt}(\lambda)\sum_{\nu} C_{\lambda,\mu}^{\nu}\sigma_{\nu}+\sum_{\lambda^+}\sum_{\nu} C_{\lambda^+,\mu}^{\nu}\sigma_{\nu}.
\end{eqnarray}
Now, $\sigma_{(1)}\cdot(\sigma_{\lambda}\cdot\sigma_{\mu})=(\sigma_{(1)}\cdot\sigma_{\lambda})\cdot\sigma_{\mu}$
since $H_T$ is an associative ring. Thus taking the coefficient of $\sigma_{\nu}$ on both sides
of this identitiy gives the conclusion.
\end{proof}

\noindent
\emph{Proof of Theorem~\ref{thm:DequalsC}:} Suppose that $\{D_{\lambda,\mu}^\nu\}$ satisfies
\begin{equation}
\label{eqn:therec}
\sum_{\lambda^{+}} D_{\lambda^{+},\mu}^{\nu}=D_{\lambda,\mu}^{\nu}\cdot {\tt wt}(\nu/\lambda)
+\sum_{\nu^{-}}D_{\lambda,\mu}^{\nu^{-}}
\end{equation}
and we have established Proposition \ref{prop:DequalsCspecial} (as done in Section~3.1). Then,
by induction on $|\nu|-|\lambda|\geq 0$, the recurrence
(\ref{eqn:therec}) together with the initial condition $D_{\lambda,\mu}^{\lambda}=C_{\lambda,\mu}^{\lambda}$ uniquely determine $D_{\lambda,\mu}^{\nu}$; cf.~\cite[Corollary 1]{Knutson.Tao}. Hence, by Lemma~\ref{lemma:therecC} it follows that $D_{\lambda,\mu}^{\nu}=C_{\lambda,\mu}^{\nu}$. This would complete the proof of the theorem.

Hence it remains to show that the polynomials
$\{D_{\lambda,\mu}^\nu\}$ satisfy (\ref{eqn:therec}). Let ${\mathcal D}_{\lambda,\mu}^{\nu}$
denote the set of witnessing lattice and semistandard tableaux that rectify to $S_{\mu}$. Fix $\lambda^{+}$
and consider $T\in {\mathcal D}_{\lambda^{+},\mu}^{\nu}$. Let $\x=\lambda^{+}/\lambda$
and consider the tableaux $\{S:[S] \ {\tt Eqjdt}_{\x}(T)\neq 0\}$.
Among these $S$, exactly one is of shape $\nu^{-}/\lambda$ (for some $\nu^{-}$). For this $S$
we have $\omega_S=1$ and $S\in {\mathcal D}_{\lambda,\mu}^{\nu^-}$.
The other $S$ appearing in the formal sum arise from an expansion of an
edge label into a box $\y$ in $\nu/\lambda$ and $\omega_S=\beta(\y)$; also $S\in {\mathcal D}_{\lambda,\mu}^\nu$. By construction, no other kinds of tableaux can appear. (In this paragraph, we have tacitly used Proposition~\ref{prop:semistandard}.)

It remains to show that:
\begin{itemize}
\item[(a)] Given $W\in {\mathcal D}_{\lambda,\mu}^{\nu^-}$
there is a unique $\lambda^+$ and a unique $T\in {\mathcal D}_{\lambda^+,\mu}^\nu$ such that \[[W] \ {\tt Eqjdt}_{\x}(T)=1.\]
\item[(b)] Given $W\in {\mathcal D}_{\lambda,\mu}^{\nu}$ and a box ${\sf b}\in \nu/\lambda$
there is a unique $\lambda^+$ and a unique $T\in {\mathcal D}_{\lambda^+,\mu}^\nu$
such that
\[[W] \ {\tt Eqjdt}_{\x}(T)=\beta({\sf b}).\]
\end{itemize}

In order to prove (a) and (b), we need to develop a notion of reverse {\tt Eqjdt}.
In (a),
we wish to argue that from $W$ and the
box $\bbb=\nu/\nu^-$ there is a unique sequence of
tableaux
\begin{equation}
\label{eqn:uniqueseq}
T=U^{(-N)}\mapsto\cdots \mapsto U^{(-1)}\mapsto U^{(0)} = W,
\end{equation}
(for some $N$) where each $U^{(-j)}$ is a good and lattice
tableau. Moreover, $U^{(-j)}\mapsto U^{(-j+1)}$ means $U^{(-j+1)}$
is obtained from $U^{(-j)}$ by one of the swaps (I)-(IV) into the box of $U^{(-j)}$ containing
the $\bullet$. In (b) we wish to make the same argument, except that $U^{(0)}$ is
obtained from $W$ by moving the label in ${\sf b}$ to the lower edge of ${\sf b}$, and
a $\bullet$ is placed in ${\sf b}$.

\excise{(At this point, we pause to emphasize to the reader that, in principle,
the same general strategy is applicable to
proving the first version of the Littlewood-Richardson rule given in the introduction. However, we were
unable to construct proofs of (a) and (b) above in that context. In some sense, the main problem is that our
first version of {\tt Eqjdt} does not allow for arbitrary rectification orders.)}

Now, (a) and (b) follow from three claims.

\begin{Claim}
\label{claim:good}
Suppose $U=U^{(-i)}$ is a really good and lattice tableau with $\bullet$ in box ${\sf b}$ and locally near ${\sf b}$ we label the boxes as $U=\tableau{\cdots&{\sf a}\\{\sf c}&{\sf b}}$.
If box ${\sf a}$ or box ${\sf c}$ has a label, or if the upper edge of ${\sf b}$ has a label, then there exists a unique good and lattice
tableau $V$ with $\bullet$ in box ${\sf d}\in \{{\sf a},{\sf b},{\sf c}\}$ such that $V\to U$, using one of the swaps (I)--(IV).
\end{Claim}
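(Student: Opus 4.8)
The claim asks us to reverse a single {\tt Eqjdt} swap: given a really good, lattice tableau $U$ with $\bullet$ in box $\sf b$, and given that at least one of boxes $\sf a$, $\sf c$, or the upper edge of $\sf b$ carries a label, we must show there is a unique good, lattice $V$ (with $\bullet$ in $\sf a$, $\sf b$, or $\sf c$) and a swap $V\to U$ of one of the four types. The strategy is to proceed by a case analysis according to \emph{which} forward swap (I)--(IV) could have produced $U$, reading each of the four swap definitions backwards, and to check in each case that (i) a candidate $V$ exists, (ii) $V$ is good and lattice, and (iii) no two of the four cases can both apply, so uniqueness holds. The hypothesis that some nearby label exists is exactly what rules out the degenerate situation in which no swap at all was performed into $\sf b$.

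\medskip

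\textbf{Key steps.} First I would organize the reversal by the \emph{position} of the label that moved. A forward swap into box $\x$ moves a label \emph{up into} $\x$ from the south (swaps (I), (II)) or \emph{left into} $\x$ from the east (swaps (III), (IV)), and the $\bullet$ ends up respectively south or east of $\x$. So in $U$ the $\bullet$ sits in $\sf b$, and the label that was displaced sits either in $\sf a$ (it came up) or in $\sf c$ (it came left), or — in the expansion case (II) — on the \emph{lower} edge of $\sf a$ rather than in $\sf a$ itself. Thus the three sub-cases are: (1) $\sf a$ has a box label, coming from a vertical swap (I) reversed (the label in $\sf a$ drops to $\sf b$, $\bullet$ goes to $\sf a$) or from a resuscitation/horizontal swap reversed into the box above; (2) the upper edge of $\sf b$ has a label, which must be undone via an expansion swap (II) reversed — move that edge label down into $\sf b$ and put $\bullet$ above — or via (III)/(IV) reversed; (3) $\sf c$ has a box label, coming from (III) or (IV) reversed with $\x = \sf b$'s east neighbour, i.e. $\sf c$ is the box whose $\bullet$ moved east into $\sf b$. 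In each sub-case I would write down the unique candidate $V$ forced by inverting the relevant swap rule verbatim (for (IV), the set $Z$ of moved labels is reconstructed from the labels of $W=U$ on the relevant upper edge together with the lattice data, using Claims~\ref{claim:cannotbe} and~\ref{claim:IVispossible}). Then, crucially, I would verify $V$ is good (really good or nearly bad) and lattice: goodness follows because $U$ is really good and the inverse moves can only introduce the one controlled violation that "nearly bad" permits, and latticeness follows from the proof of Proposition~\ref{prop:semistandard}, which showed each forward swap preserves latticeness and, read in reverse, the column multisets either are unchanged (swaps (I), (III)) or change in exactly the way (IV)/(II) prescribes.

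\medskip

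\textbf{Uniqueness and the main obstacle.} For uniqueness I would argue that the three sub-cases above are mutually exclusive given that $U$ is \emph{really} good (not merely good): if $\sf a$ carries a box label, then by really-goodness of $U$ the box $\sf c$ lies below $\sf a$ and is filled, and the only swap whose forward form places $\bullet$ strictly south of the displaced label is the vertical swap (I) — (II) is excluded because it displaces an \emph{edge} label, not a box label, and (III)/(IV) would have left the $\bullet$ to the \emph{east}, not south. Conversely a filled $\sf c$ forces the east-moving swaps. The upper-edge-of-$\sf b$ case similarly pins down (II) (or the resuscitation input to (III)). Within the east-moving family, (III) versus (IV) is decided by whether the largest label on the relevant upper edge equals the label that moved, exactly as in the forward definition; and the forward analysis in Case~1/Case~2 of Proposition~\ref{prop:semistandard} guarantees these conditions are exhaustive and exclusive. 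I expect the main obstacle to be the reversal of the horizontal swap (IV): one must recover the full block $Z=\{\mathfrak r,\dots,\mathfrak m\}$ of moved labels and confirm that the tableau $V$ one writes down genuinely satisfies (IV)(i)--(iii) going forward — this is where the lattice hypothesis and the careful bookkeeping of Claim~\ref{claim:IVispossible} (the inequalities ${\mathfrak r}\le\overline{\mathfrak r}\le\overline{\mathfrak m}<{\mathfrak m}$ and the "${\mathfrak m}+1$ not in column to the right" argument) do the real work, and where one must also handle the possibility that $V$ itself is only nearly bad rather than really good, so that the reversal chain threads through nearly bad intermediates as summarized at the end of Case~2.
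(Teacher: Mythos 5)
There is a genuine gap, and it sits exactly at the uniqueness step. Your case decomposition — box $\sf a$ filled $\Rightarrow$ reverse (I); box $\sf c$ filled $\Rightarrow$ reverse (III)/(IV); upper edge of $\sf b$ filled $\Rightarrow$ reverse (II) — is not a partition of the possibilities: generically \emph{all three} of these positions carry labels at once, so your claimed mutual exclusivity is false and your argument would "force" both (I) and an east-moving swap simultaneously. You have conflated "box $\sf a$ has a label" with "the label in $\sf a$ is the one the previous swap displaced." The actual discriminator, which your proposal never mentions, is the numerical comparison between the label $x$ in $\sf c$ and the label $y$ above the bullet (the box label of $\sf a$ when the upper edge of $\sf b$ is empty, or the largest label on that upper edge otherwise): when $x\leq y$ (or $x$ is absent) only the reversal of (I) (resp.\ (II)) produces a semistandard $V$, and when $x>y$ (or $y$ is absent) only (III) or (IV) can be reversed. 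This comparison is the content of the paper's Subcases 1a/1b and 2a/2b, and without it the uniqueness assertion is unproved. A smaller but real error in the same vein: your reversal of (II) is wrong — in the $T''$ branch of an expansion swap the $\bullet$ never moves, so the reversal keeps the $\bullet$ in $\sf b$ and returns the upper-edge label to the \emph{lower} edge of $\sf b$; it does not drop the label into $\sf b$ and put the $\bullet$ above.

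Separately, you defer the reversal of (IV) to Claims~\ref{claim:cannotbe} and~\ref{claim:IVispossible}, but those are forward-direction statements about nearly bad tableaux and do not construct the predecessor. The paper has to do genuinely new work here: choose the maximal consecutive interval $x-d,\dots,x$ of labels eligible to move back onto the edge above $\sf c$, prove the resulting $V'$ is lattice (a dedicated subclaim using latticeness of $U$ and maximality of $d$), prove no smaller interval works (another subclaim), prove the (III)- and (IV)-reversals cannot both yield $U$, and prove at least one of them applies. None of this is supplied by the forward claims you cite, so the existence half of your argument for the (IV) case is also incomplete.
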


\noindent
\emph{Proof of Claim~\ref{claim:good}:} There are two main cases, depending on whether the
upper edge of ${\sf b}$ is empty or not.

\noindent
{\bf Case 1}: \emph{Locally $U$ looks like $\tableau{{z}&y&w\\{x}&\bullet&q}$, where the upper edge of the box ${\sf b}$
containing $\bullet$ is empty, but other edges are possibly nonempty.}

\noindent
(Subcase 1a: $x\leq y$ or $x$ does not exist): Since $U$ is good, we have $z<x\leq y\leq w<q$.
If $V=\tableau{{z}&\bullet&w\\x&y&q}$ then $V$ is (really) good and also lattice since $U$ is lattice. Moreover,
since $y\leq w$ then we can apply the vertical swap (I) to give $U$.
Hence it remains to show that there are no other possible choices of $V$.

Clearly a expansion swap (II) could not result in $U$ since
we assume the edge immediately above the $\bullet$ in $U$ is empty. Also, swaps (III) and (IV)
are not possible if $x$ does not exist. Thus, we assume $x$ exists.

If resuscitation (III) results in $U$ then the box with $x$ in $U$ had a
$\bullet$ in $V$, and the ${\mathfrak u}=x$ is on the top edge of this box in $V$.
But $y\geq x$ implies $V$ is not semistandard in the second column.

Finally, if a horizontal swap (IV) resulted in $U$, then
\[\begin{picture}(100,35)
\put(20,15){$V=\tableau{{z}&y&w\\\bullet&{\mathfrak r}&q}$}
\put(69,-8){$Y$}
\end{picture}
\]
where $x\in \{{\mathfrak r}\}\cup Y$. However,
since $x\leq y$, we have a violation of
vertical semistandardness in the second column of $V$. Hence, (IV) could not have used either.

\noindent
(Subcase 1b: $x>y$, or $y$ does not exist):
If $y$ does not exist then clearly the vertical swap (I) did not result in $U$. If
$y$ exists then the same is true since
we would have $V=\tableau{{z}&\bullet&w\\x&y&q}$: but since $x>y$ then we obtain a
violation of semistandardness in the second row.

As in subcase 1a, the expansion swap (II) cannot produce $U$ since we have assumed that the edge
directly above the $\bullet$ in $U$ is empty.

Resuscitation (III) can happen if
\[\begin{picture}(100,35)
\put(20,15){$V=\tableau{{z}&y&w\\\bullet&x&q}$}
\put(51,13){$x$}
\end{picture}
\]
and $x$ is the least label
 in the edge below the $\bullet$ in $U$.
Note $V$ is good and lattice since $U$ has these
properties. Clearly, there is at most one way to reverse using (III).

 On the other hand, if a reversal using (III) is not possible, then we aim to construct a
horizontal swap $V'\mapsto U$ where
\[
\begin{picture}(100,35)
\put(20,15){$V'=\tableau{z&y&w\\\bullet&{\mathfrak r}&q},$}
\put(72,-9){$A$}
\end{picture}
\]
$x\in \{{\mathfrak r}\}\cup A$, and in the notation of swap (IV) we have ${\mathfrak m}=x$.
More precisely, \emph{suppose} one can find a set of labels
${\mathfrak r}=x-d,x-d+1,\ldots,x-1,x$ (for some $d\geq 0$) where $x-d,x-d+1,\ldots,x-1$
are labels in the edge above the box containing $x$ in $U$ and
\[{\mathcal N}_{{\rm col} \ 1, x-i}^U={\mathcal N}_{{\rm col} \ 1, x}^U\]
for $1\leq i\leq d$. Further \emph{suppose} if those labels are moved where $A$ is (and combined with labels already on that edge in $U$)
then $V'$ is good. In this case, take $d$ to be maximal
among all choices satisfying these conditions and define $A$ and thus $V'$ in this manner.

\begin{Subclaim}
If $d\geq 0$ exists then $V'$ is lattice.
\end{Subclaim}
\begin{proof}
$U$ is lattice (by the induction hypothesis) and
only two columns of $U$ change to construct $V'$.
Thus, if $V'$ is not
lattice, the failure of latticeness can be blamed on one of these two columns. It cannot be the
first column of the local picture of $V'$ since we moved labels rightward and thus
${\mathcal N}_{{\rm col}\ 1,t}^U={\mathcal N}_{{\rm col}\ 1,t}^{V'}$ for any label $t$.
If there is a problem in the second column, it would have to be that
${\mathcal N}_{{\rm col}\ 2, x-d}^{V'}>{\mathcal N}_{{\rm col}\ 2, x-d-1}^{V'}$, so
assume this holds.
Since
$U$ is lattice we have ${\mathcal N}_{{\rm col}\ 1, x-d-1}^U\geq{\mathcal N}_{{\rm col}\ 1, x-d}^U$.  In combination with our assumption, it must be that
${\mathcal N}_{{\rm col}\ 1, x-d-1}^U={\mathcal N}_{{\rm col}\ 1, x-d}^U$, and
$x-d-1$ appears in column 1 of $U$.  It must appear either in the edge
above $x$ or in the box above it.
We also note that $x-d-1$ cannot appear in column 2 of $U$, since
if it did, we would have $\mathcal N_{{\rm col}\ 3, x-d-1}^U<\mathcal N_{{\rm col}\ 3,x-d}^U$, contrary to the assumption that $U$ is lattice.

Suppose first that $x-d-1$ appears in the first column of $U$
in the box above $x$.  That is to say, using
our labelling of entries of $U$ defined above, that $z=x-d-1$.  Now consider
the value $y$.
Since
we have assumed that $V'$ is good, we must have $y<x-d$, and semistandardness
requires $y\geq z=x-d-1$.  So $y=x-d-1$, but that contradicts our argument
above that $x-d-1$ does not appear in column 2 of $U$.

Now suppose that $x-d-1$ appears on the edge above $x$.  Since we know
that $x-d-1$ does not appear in column 2, we could have chosen
$\mathfrak r=x-d-1$ rather than $\mathfrak r = x-d$, which contradicts
the fact that $d$ was chosen to be maximal.

We have found a contradiction based on our assumption that $V'$ was not
lattice, so it must be that $V'$ is lattice.
\end{proof}

\begin{Subclaim}
Suppose ${\widetilde V}'$ is good, lattice and ${\widetilde V}'\mapsto U$ is obtained by swap (IV). Then ${\widetilde V}'$ is unique (and hence ${\widetilde V}'=V'$ as just constructed above).
\end{Subclaim}
\begin{proof}
The only question is whether in our given construction of $V'$ we can instead use $0\leq d'<d$ in place
of $d$. That is, we construct ${\widetilde V}'$ by moving fewer labels right than we could have,
i.e., we move ${\mathfrak r}=x-d',x-d'+1,\ldots,x-1,x=z$. If we do this then note that $\widetilde V'$ is not lattice since
\[{\mathcal N}_{{\rm col}\ 2, x-d'}^{\widetilde V'}={\mathcal N}_{{\rm col}\ 1,x-d'}^U={\mathcal N}_{{\rm col}\ 1, x-d'-1}^U={\mathcal N}_{{\rm col}\ 2, x-d'-1}^{\widetilde V'}+1.\]
(The first equality holds since there is no $x-d'$ in column $2$ of $U$.) Hence we find
${\mathcal N}_{{\rm col}\ 2, x-d'}^{\widetilde V'}>{\mathcal N}_{{\rm col}\ 2, x-d'-1}^{\widetilde V'}$,
so $\widetilde V'$ is not lattice.
\end{proof}

\begin{Subclaim}
If $V$ and $V'$ are good and lattice then they cannot both result in $U$, using swaps (III) and (IV) respectively.
\end{Subclaim}
\begin{proof}
If (III) could be
applied to $V$ to give $U$ then
\[\begin{picture}(120,35)
\put(20,15){$U=\tableau{{z}&y&w\\x&\bullet&q}$}
\put(70,-7){$x$}
\end{picture}\]
where the edge label $x$ is the least label on its edge. However, then
$V'$ is ruled out since we must have two $x$'s in the second column of
$V'$, a contradiction.\end{proof}

\begin{Subclaim}
One can actually reverse from $U$ using either (III) or (IV).
\end{Subclaim}
\begin{proof}
Let $\gamma$ be the smallest label on the edge directly below the
$\bullet$ in $U$. It satisfies $x\leq \gamma$ (since $U$ is good).
If $\gamma=x$ we saw (III) is applicable: $V\mapsto U$ where $V$ is good and lattice.
If $\gamma>x(>y)$ then since $x\leq q$ (since $U$ is really good) the construction of $V'$
can be achieved, and we saw $V'$ is good and lattice, as desired.
\end{proof}

\noindent
{\bf Case 2}: \emph{Suppose
\[\begin{picture}(100,35)
\put(20,15){$U=\tableau{d&e&f\\x&\bullet&t}$}
\put(70,13){$y$}
\end{picture}
\]
where $y$ is the largest label in its edge.}

\noindent
\emph{Subcase 2a: $x\leq y$:} Clearly a vertical swap (I) could not have produced $U$. If a
resuscitation swap (III) produced $U$ then $V$ looks locally like
\[\begin{picture}(100,20)
\put(20,5){$V=\tableau{\bullet&x}$}
\put(50,21){$x$}
\put(70,21){$y$}
\end{picture}
\]
where semistandardness requires $y<x$. This contradicts the assumption of this subcase.
On the other hand, if a horizontal swap (IV) produced $U$ then
\[\begin{picture}(100,35)
\put(20,15){$V=\tableau{d&e&f\\\bullet&{\mathfrak r}&t}$}
\put(70,13){$y$}
\put(68,-6){$A$}
\end{picture}
\]
where $x\in\{{\mathfrak r}\}\cup A$, which by vertical semistandardness implies that
$x>y$, which is again a contradiction.

Finally, consider
\begin{equation}
\label{eqn:expansionmove}
\begin{picture}(100,35)
\put(20,15){$V=\tableau{d&e&f\\x&\bullet&t}$}
\put(70,-5){$y$}
\end{picture}
\end{equation}
where $y$ is the least label on its edge (the other labels being those on the same edge
of $U$.)
Clearly $V$ is good and lattice (since we assume $x\leq y$ and $U$ is good and lattice) and an expansion swap (II) produces $U$.

\noindent
\emph{Subcase 2b: $x>y$:} Clearly $U$ did not arise from a vertical swap (I). Next, suppose an expansion swap (II)
produced $U$. Then $V$ is of the form (\ref{eqn:expansionmove}), where
 $y$ is the least element on its edge. But $y<x$, so $V$ is not good.

A resuscitation swap (III) can produce $U$ if
\[
\begin{picture}(200,35)
\put(20,15){$V=\tableau{d&e&f\\\bullet&x&t}$}
\put(52,13){$x$}
\put(70,13){$y$}

\put(110,15){$\mapsto$}
\put(130,15){$U=\tableau{d&e&f\\x&\bullet&t}$}
\put(180,-7){$x$}
\put(180,13){$y$}
\end{picture}
\]

Suppose the resuscitation swap (III) is not possible
starting with $V$. We need to construct a unique
\[
\begin{picture}(100,35)
\put(20,15){$V'=\tableau{d&e&f\\\bullet&x&t}$}
\put(73,13){$y$}
\put(72,-7){$Y$}
\end{picture}
\]
such that $V'$ is good and lattice, and $V'\mapsto U$ using (IV).
The arguments are exactly the same as in subcase 1b.

We have now completed our proof of Claim~\ref{claim:good}.
\excise{As in subcase 1b, we must argue that exactly one of $V$ and $V'$ results in $U$. (The arguments are
in fact essentially the same.) If (III) could be applied to $V$ could be applied to $V$ to give
$U$ then either $U$ looks like
\[\begin{picture}(240,35)
\put(20,15){$U_1=\tableau{{d}&e&f\\x&\bullet&t}$}
\put(74,-6){$x$}
\put(74,13){$y$}
\put(130,15){or $U_2=\tableau{d&e&f\\x&\bullet&t}$}
\put(194,-6){$x\!\!+\!\!1$}
\put(198,13){$y$}
\end{picture}
\]
If $U=U_1$ then $V'$ could not be a possibility since we would have a violation of vertical
semistandardness in the second column of $V'$. Therefore say $U=U_2$. Hence $x+1\in Y$ but
does not move left. The exact same argument as in subcase 1b shows that the only reason for this
is that (III)(ii) does not hold. However, this implies $V\to U$ did not happen after all.}
\qed

\begin{Claim}
\label{claim:nearbad}
In the process of reversing from $W$, if we arrive at a tableau $U=U^{(-i)}$ that is nearly bad, then the forward step $U\mapsto U^{\star}=U^{(-i+1)}$ was a horizontal swap.
\end{Claim}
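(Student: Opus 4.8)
The plan is to argue by elimination. By the setup of the reverse construction, the forward step $U\mapsto U^{\star}=U^{(-i+1)}$ is one of the swaps (I)--(IV) applied at the box $\x$ holding the $\bullet$ of $U$, and moreover $U^{\star}$, being one of the tableaux $U^{(-j)}$, is good. So it suffices to rule out (I), (II) and (III) as this forward step. The only feature of $U$ I will use is its near\/-badness: the boxes immediately left of and immediately right of $\x$ carry labels, say $\ell$ and $\mathfrak r$, with $\ell>\mathfrak r$, and the goodness conditions (b) and (c) hold for $U$.

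First I would dispose of the expansion swap (II). It can only be applied when $\mathfrak b\le \mathfrak r$, where $\mathfrak b$ is the smallest label on the lower edge of $\x$; but condition (b) for $U$ gives $\ell\le \mathfrak b$, so $\mathfrak b\ge \ell>\mathfrak r$ and the prerequisite of (II) fails. Hence (II) is impossible. For the vertical swap (I) and the resuscitation swap (III) I would instead inspect the output. If (I) were the forward step, then $\mathfrak b\le \mathfrak r$ (here $\mathfrak b$ is the entry of the box $\z$ below $\x$) and $U^{\star}$ is obtained by moving $\mathfrak b$ up into $\x$ and the $\bullet$ down into $\z$; the row of $U^{\star}$ through $\x$ then reads $\ldots,\ell,\mathfrak b,\mathfrak r,\ldots$, and $\ell>\mathfrak r\ge \mathfrak b$ is a strict decrease between consecutive entries that does not sit immediately to the left of the $\bullet$ of $U^{\star}$ (which is now one row below, in $\z$). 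Thus $U^{\star}$ is bad, contradicting that it is good. If instead (III) were the forward step, then the largest label on the upper edge of $\x$ equals $\mathfrak r$, and $U^{\star}$ is obtained by moving that label into $\x$, the $\bullet$ into $\y$, and $\mathfrak r$ onto the lower edge of $\y$; the row of $U^{\star}$ through $\x$ then reads $\ldots,\ell,\mathfrak r,\bullet,\ldots$, so the strict decrease $\ell>\mathfrak r$ now lies two cells to the left of the $\bullet$ --- again a violation outside the single position tolerated by ``really good'' or ``nearly bad''. So $U^{\star}$ is bad, a contradiction.

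Having ruled out (I), (II) and (III), and knowing the forward step is one of (I)--(IV), we conclude it is the horizontal swap (IV), as claimed.

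The only part needing care is the analysis of swaps (I) and (III): one must identify precisely where the $\bullet$ of $U^{\star}$ lands (in $\z$, respectively in $\y$) and verify that the resulting strict decrease $\ell>\mathfrak b$, respectively $\ell>\mathfrak r$, genuinely falls outside the one tolerated position in the definition of ``good''. One should also observe that this conclusion survives the degenerate case in which the $\bullet$ of $U^{\star}$ lands on an outer corner and is erased, since then $U^{\star}$ must be honestly semistandard; the remaining boundary cases (such as the absence of $\mathfrak b$, which only makes (I) and (II) less applicable) are routine.
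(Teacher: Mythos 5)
Your proof is correct and follows essentially the same route as the paper: assume $U^{\star}$ must be good, then rule out swaps (I) and (III) by locating the resulting row violation $\ell>\mathfrak b$ (resp.\ $\ell>\mathfrak r$) outside the single position tolerated by near badness. The only cosmetic difference is that you eliminate swap (II) by showing its precondition $\mathfrak b\le\mathfrak r$ already fails (via goodness condition (b) and $\ell>\mathfrak r$), whereas the paper dismisses it ``similarly'' to (I); both are valid.
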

\begin{proof}
By assumption, locally we have
\[U=\tableau{z&y&w\\x&\bullet&q\\{j}&k&{m}},\]
where $x>q$. We show that $U\mapsto U^{\star}$
could not be swaps (I), (II) and (III).

Suppose $U\mapsto U^{\star}$ is swap (I). Then $k\leq q$. But then $U^{\star}$ is not good since $x>k$ and $x$ and $k$ are adjacent in $U^{\star}$; this is a contradiction. Similarly, we could not have used swap (II). Finally, if swap (III) was used, then
$q={\mathfrak u}$ where ${\mathfrak u}$ is the largest label in the upper
edge of the box in $U$ with the $\bullet$. But $x>q=u$ means that, again,
 $U^*$ would not be good.
\end{proof}

Claim~\ref{claim:good} tells us how to reverse from $W$ until we arrive at a nearly bad tableau $U$. Claim~\ref{claim:nearbad} says that we can only arrive at a nearly bad tableau by (reversing) a horizontal swap (IV). The remaining claim below explains how to reverse from a nearly bad tableau:

\begin{Claim}
\label{claim:isnearlybad}
Suppose we are in the process (\ref{eqn:uniqueseq})
of reversing from $W$ and we
arrive at a nearly bad $U=U^{(-i)}$.
Then there is a good and lattice tableau $V$ such that $V\mapsto U$ is a swap (IV). If $V$ is nearly bad, the defect occurs in the same row as
the defect of $U$, but one square to the left.
\end{Claim}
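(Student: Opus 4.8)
The plan is to prove Claim~\ref{claim:isnearlybad} as the reverse-direction counterpart of Claims~\ref{claim:cannotbe} and~\ref{claim:IVispossible}, which controlled nearly bad tableaux in the forward direction; the construction of $V$ itself mirrors Subcase~1b in the proof of Claim~\ref{claim:good}, where the analogous reverse step was carried out for really good tableaux.

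First I would fix local notation. Since $U$ is nearly bad, near its $\bullet$ it has the shape of Claim~\ref{claim:nearbad}'s picture: the box $\x$ immediately left of the $\bullet$ carries a label $\mathfrak m$, the box $\y$ immediately to the right carries $q$, the box immediately left of $\x$ carries $z$, the defect is $\mathfrak m>q$, and $\x,\y$ lie in some row $R$; write $W$ for the set of labels on the upper edge of $\x$, so $\max W<\mathfrak m$ by column semistandardness. Any swap~(IV) producing $U$ must have moved a consecutive run $Z=\{\mathfrak r,\dots,\mathfrak m\}$ leftward from the column of $\y$ into the column of $\x$, depositing $Z\setminus\{\mathfrak m\}$ on the upper edge of $\x$ and a $\bullet$ in $\y$; hence $\{\mathfrak r,\dots,\mathfrak m-1\}\subseteq W$. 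I would therefore define $V$ by taking $\mathfrak r$ to be the smallest label with $\{\mathfrak r,\dots,\mathfrak m-1\}\subseteq W$ for which the tableau obtained by deleting $\{\mathfrak r,\dots,\mathfrak m-1\}$ from the upper edge of $\x$, replacing $\mathfrak m$ in $\x$ by a $\bullet$, writing $\mathfrak r$ in $\y$, and appending $\{\mathfrak r+1,\dots,\mathfrak m\}$ to the lower edge of $\y$ is one on which swap~(IV) at $\x$ is legal and reproduces $U$. A direct check using that $U$ is good shows $\mathfrak r=\mathfrak m$ (so $Z=\{\mathfrak m\}$, moving a single label straight left) always qualifies, so $\mathfrak r$ is well-defined; let $V$ be the tableau for this minimal $\mathfrak r$.

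Next I would verify that $V$ is lattice and good. Passing from $U$ to $V$ leaves the multiset of labels weakly to the right of every column strictly right of $\x$ unchanged and shifts $Z$ one column left within the region weakly right of $\x$, so a latticeness failure of $V$ could only be the inequality ${\mathcal N}^V_{\y,\mathfrak r-1}<{\mathcal N}^V_{\y,\mathfrak r}$, which would force ${\mathcal N}^U_{\y,\mathfrak r-1}={\mathcal N}^U_{\y,\mathfrak r}$ and, exactly as in the first Subclaim of the proof of Claim~\ref{claim:good}, put $\mathfrak r-1$ into the column of $\x$ of $U$ in a spot that either contradicts latticeness of $U$ (which holds throughout the reverse process) or shows $\mathfrak r-1$ was an admissible smaller choice, contradicting minimality. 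For goodness: deleting edge labels from the upper edge of $\x$ and $\mathfrak m$ from $\x$ keeps that column strictly increasing; the appended labels $\{\mathfrak r+1,\dots,\mathfrak m\}$ below $\y$ are all $>\mathfrak r$, and by condition~(b) for the good tableau $U$ the old lower edge of $\y$ also lies strictly above $\mathfrak r$, so the column of $\y$ stays strictly increasing; conditions~(b) and~(c) for $V$ follow from those for $U$ and the applicability of swap~(IV). The only inequality of $V$ that can break is $z>\mathfrak r$, between the box left of the $\bullet$ and the box $\y$ --- a defect in row $R$, one box to the left of the defect of $U$, as asserted. Finally, since the reverse sequence~(\ref{eqn:uniqueseq}) must be single-valued, I would also record uniqueness: a good lattice $\widetilde V$ with $\widetilde V\mapsto U$ a swap~(IV) moves a run with the same top label $\mathfrak m$ and some left end $\mathfrak r'$ satisfying $\{\mathfrak r',\dots,\mathfrak m-1\}\subseteq W$; the case $\mathfrak r'>\mathfrak r$ is excluded by the latticeness argument of the second Subclaim in Claim~\ref{claim:good}, and $\mathfrak r'<\mathfrak r$ contradicts minimality of $\mathfrak r$, so $\widetilde V=V$.

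I expect the genuine obstacle to be the goodness verification in the last step: showing the defect of $V$ is confined to the single box $\x$ (so that $V$ is good and not bad) requires simultaneously honouring the edge-semistandardness conditions and the three prerequisites (IV)(i)--(iii), and entails the same delicate bookkeeping about where the labels $\mathfrak r-1,\dots,\mathfrak m+1$ can sit in the columns of $\x$ and $\y$ that appears in Claim~\ref{claim:IVispossible}, run in reverse. The remaining arguments are routine adaptations of the forward proofs and of Claim~\ref{claim:good}.
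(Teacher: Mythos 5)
Your overall architecture matches the paper's: reverse the forward horizontal swap by returning a consecutive run of edge labels from the upper edge of the box containing $\mathfrak m$ to the column on its right, choose the run extremally, and then verify goodness and latticeness. But your existence argument breaks down at exactly the point where the hypothesis that $U$ is \emph{nearly bad} bites. You claim that the trivial choice $\mathfrak r=\mathfrak m$ (returning the single label $\mathfrak m$ one step to the right) always qualifies, so that a minimal qualifying $\mathfrak r$ exists. It never qualifies here: near badness means $\mathfrak m>q$, so the tableau obtained by putting $\mathfrak m$ back into the box vacated by $U$'s $\bullet$ has $\mathfrak m$ immediately to the left of $q$ with $\mathfrak m>q$, a row violation \emph{not} adjacent to the new $\bullet$; such a tableau is bad, not nearly bad, and cannot occur in the reverse sequence (\ref{eqn:uniqueseq}). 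The paper notes precisely this (``${\mathfrak r}=x$ is not possible since then $V$ is bad''). More generally, $V$ is good only if the label $\mathfrak r$ that ends up adjacent to $q$ satisfies $\mathfrak r\leq q$, a constraint you never impose; your assertion that the only inequality of $V$ that can break is $z>\mathfrak r$ silently presupposes it.

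Consequently the real content of the claim --- that the upper edge of the box containing $\mathfrak m$ carries the entire run $q,q+1,\dots,\mathfrak m-1$, so that one may take $\mathfrak r=a_j\leq q$ and obtain a good $V$ whose only possible defect sits one square to the left --- is absent from your proposal, and it cannot be extracted from $U$ alone. The paper obtains it from the \emph{forward} swap $U\mapsto U^{\star}$ supplied by Claim~\ref{claim:nearbad}: condition (IV)(ii) for that swap forces the counts ${\mathcal N}^{U^{\star}}_{{\rm col}\ 2,\,i}$ to agree for $q\leq i\leq f$ with $q<\mathfrak m\leq f$, and latticeness of $U^{\star}$ then places $\mathfrak m-1$, hence inductively $\mathfrak m-2,\dots,q$, on that upper edge. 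Your proposal uses Claim~\ref{claim:nearbad} only to know that the forward step had type (IV) and never extracts this structural information, so the existence of a good and lattice $V$ is not established. (Two smaller points: you write $\mathfrak r$ and the returned run into the box $\y$ carrying $q$, whereas they belong in the box containing $U$'s $\bullet$, one step to the left of $\y$; and your uniqueness argument is circular as stated, since ``minimality among qualifying $\mathfrak r$'' already builds in the conclusion that no other good lattice preimage exists.)
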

\begin{proof}
By Claim~\ref{claim:nearbad} we may suppose
$U\mapsto U^{\star}=U^{(-i+1)}$, where the local pictures are
\[\begin{picture}(240,35)
\put(20,15){$U=\tableau{{z}&y&w\\x&\bullet&q}$}
\put(87,-8){$Y$}
\put(67,-8){$C$}
\put(49,-8){$B$}
\put(49,10){$A$}
\put(69,10){$T$}
\put(130,15){$U^{\star}=\tableau{{z}&y&w\\x&f&\bullet}$}
\put(200,-8){$Y'$}
\put(184,-8){$C$}
\put(164,-8){$B$}
\put(184,12){$W$}
\put(164,10){$A$}
\end{picture}
\]
and $x>q$ (since $U$ is nearly bad).

We need to show that we can take some of the labels of $A$ and move them right so as to construct
\[\begin{picture}(100,35)
\put(20,15){$V=\tableau{{z}&y&w\\\bullet&{\mathfrak r}&q}$}
\put(87,-8){$Y$}
\put(67,-8){$C'$}
\put(49,-8){$B$}
\put(49,10){$A'$}
\put(69,10){$T$}
\end{picture}
\]
where all the conditions on being good (but possibly nearly bad) are met, and
$V\mapsto U$ using (IV).

We have $x\leq f<\min C$ (since $U^{\star}$ is good). Also, $x>q$ and $U\mapsto U^{\star}$ occurs, so $x>q>\max T$.
 Hence $x$ can be placed into $C$'s edge and maintain vertical semistandardness in that column.
 Note that ${\mathfrak r}=x$ is not possible since then $V$ is bad.
  Let $A=\{a_k<a_{k-1}<\ldots<a_1\}$ where $a_1<x$ (by column
semistandardness).  We need to show there exists  $j\geq 1$ satisfying the following conditions:
\begin{itemize}
\item $a_{j},a_{j-1},\dots,a_1,x$ forms an interval,
\item $\mathcal N^U_{{\rm col}\ 1,a_j} = \mathcal N^U_{{\rm col}\ 1,x}$,
\item $a_j$ is strictly larger than the maximum entry of $T$ (or $y$, if $T$
is empty).
\end{itemize}
 Then choose $j$ to be maximal subject to those conditions. We want to establish that $a_j \leq q$ so that we can set $\mathfrak r=a_j$
and $C'=C\cup \{a_{j-1},\dots,a_1,x\}$, and have $V$ be good.

Now, since $U\mapsto U^{\star}$ using swap (IV) we know $q+1,q+2,\ldots,f-1,f\in Y$. Moreover,
by the prerequisite (IV)(ii) we have
${\mathcal N}_{{\rm col}\ 2, i}^{U^{\star}}={\mathcal N}_{{\rm col}\ 2, q}^{U^{\star}}$
for $q\leq i\leq f$. Using this, together with the fact that $q<x\leq f$,
and the fact that the first column of $U$ and $U^{\star}$ are
the same, we deduce that there exists an $x-1$ in column $1$ of $U$: otherwise we find that
${\mathcal N}_{{\rm col}\ 1,x-1}^{U^\star}<{\mathcal N}_{{\rm col}\ 1,x}^{U^\star}$ so that
$U^{\star}$ is not lattice (contradicting our induction hypothesis).
If $x-1\not\in A$ it
must be $z$. But then $y\geq x-1$ which
contradicts that $U\mapsto U^{\star}$ is possible. Hence $x-1\in A$.
Continuing this same reasoning implies $x-2,x-3,\ldots,q+1,q\in A$.
It then follows that $a_j\leq q$, so $V$ is good.

We now check that $V\mapsto U$.  The only concern is if $x+1\in C'$, so that $x+1$ might also move left when we apply the horizontal swap (IV), so that we do not arrive at $U$ after all.  However, if this were true then $\mathcal N^V_{{\rm col}\ 2,x}=
\mathcal N^V_{{\rm col}\ 2,x+1}$.  This would imply that
$\mathcal N^U_{{\rm col}\ 2,x}<\mathcal N^U_{{\rm col}\ 2,x+1}$, violating the
lattice property of $U$.

It remains to check that $V$ is lattice. Recall $U$ is lattice (by the induction hypothesis) and
$V$ and $U$ agree except in two columns. Since we are moving labels to the right from column $1$
of $U$ into column $2$, if $V$ is not lattice we have ${\mathcal N}_{{\rm col}\ 2,a_j}^V>{\mathcal N}_{{\rm col}\ 2, a_j -1}^V$.

In order for this to happen, we must have an $a_j-1$ in column $1$ of $U$.
Further, there must be no $a_j-1$ in column $2$ of $U$, since otherwise
$\mathcal N_{{\rm col}\ 2, a_j -1}^U >\mathcal N_{{\rm col}\ 2, a_j}^U$, and $U$
is not lattice, contrary to our assumption.

Hence, it must be true that ${\mathcal N}_{{\rm col}\ 1,a_j}^U={\mathcal N}_{{\rm col}\ 1,a_j-1}^U$.
Moreover, in fact $a_j-1\in A$: Otherwise in $U$, $z=a_j-1$. Since $y\neq a_{j} -1$, by $U$'s goodness, $y\geq a_j$
implying $V\mapsto U$ is impossible, and thus violating the definition of $a_j$.
Therefore we should also have moved
$a_{j}-1(=a_{j+1})$ in our construction of $V$. This contradicts the maximality of $j$.

Summarizing, $V$ is good, but possibly nearly bad: It might be that
${\mathfrak r}$ is
strictly smaller than the first numerical label to its left (if it exists). However, in this case, the near badness has moved one square left, as claimed.
\end{proof}

\noindent
\emph{Conclusion of the proof of the Theorem~\ref{thm:DequalsC}:}
First suppose we are considering the case (b) and
our initial tableau $U^{(0)}$ that we are reversing from is obtained from $W$ by pushing the label
in box $\bbb$ to its lower edge.
Then $U^{(0)}$ is really good and lattice. So we are in the situation
of Claim~\ref{claim:good} and can take a first step in the
reversal process (\ref{eqn:uniqueseq}). If this reversal
results after some steps in a nearly bad tableau, then we can utilize Claim~\ref{claim:nearbad}
and Claim~\ref{claim:isnearlybad}. At each step we obtain a good tableau with
 strictly fewer labels northwest of the $\bullet$. Thus, by induction, we eventually
arrive at the situation that the $\bullet$ has no labels northwest of it. This happens
when $\bullet$ arrives at an outer corner of $\lambda$. Call the final
tableau $T$ of shape $\lambda^+$. Then $T$ is good (thus semistandard) and lattice.
Moreover, the final position of $\bullet$ and $T$ itself was uniquely determined from $U^{(0)}$. This completes the proof for (b). The argument for (a) is the same, except we start with
$U^{(0)}=W$.\qed

\subsection{Proof of Proposition~\ref{prop:DequalsCspecial}}

We now show that $C_{\lambda,\mu}^{\lambda}=D_{\lambda,\mu}^{\lambda}$, a fact we needed in the above proof of
Theorem~\ref{thm:DequalsC}.

For $\lambda \subseteq \Lambda=k\times (n-k)$, the
{\bf Grassmannian permutation} associated to $\lambda$
is the permutation $\pi(\lambda)\in S_n$ uniquely defined by $\pi(\lambda)_i=i+\lambda_{k-i+1}$
for $1\leq i\leq k$ and which has at most one descent, which (if it exists) appears at position $k$.

Let $w',v'\in S_n$ be the Grassmannian permutations for the conjugate shapes $\lambda',\mu'\subseteq (n-k)\times k$.
The following
identity relates $C_{\lambda,\mu}^{\lambda}$ to the localization at $e_{\mu}$ of the class $\sigma_{\lambda}$,
as expressed in terms of a specialization of the double Schubert polynomial. It is well known to experts;
it can be proved (in the conventions we use) by, e.g.,
combining \cite[Lemma~4]{Knutson.Tao} and \cite[Theorem~4.5]{Woo.Yong:KLideals}:
\[C_{\lambda,\mu}^{\lambda}(Gr_{k}({\mathbb C}^n))=\overline{{\mathfrak S}_{v'}(t_{w'(1)},\ldots,t_{w'(n)};t_1,\ldots,t_n)}.\]

Here $\overline{p(t_1,\ldots,t_n)}$ is the polynomial obtained from $p(t_1,\ldots,t_n)$ under the substitution $t_j\mapsto t_{n-j+1}$.
We refer the reader to \cite{Manivel} for background about Schubert polynomials; however, we will only use a subset of the theory,
which we describe now.

Since $v'$ is Grassmannian, we have
\[{\mathfrak S}_{v'}(X;Y)=\sum_{T}{\tt SSYTwt}(T)\]
where the sum is over all (ordinary) semistandard Young tableau $T$ of shape $\mu'$ with entries bounded above by $n-k$.
Here ${\tt SSYTwt}(T)=\prod_{\bbb\in \mu'}(x_{{\rm val}(\bbb)}-y_{{\rm val}(\bbb)+j(\bbb)})$ where $j(\bbb)={\rm col}(\bbb)-{\rm row}(\bbb)$.
This formula is well-known (see, e.g., a more general form in \cite[Theorem~5.8]{KMY}).

The Schubert polynomial $\mathfrak S_{v'}$ for a Grassmannian permutation
$v'$
can also be identified as the
{\bf factorial Schur function} $s_{\mu'}$ (cf.~\cite{Louck,Macdonald,Goulden}):
One has (see, e.g., \cite[Section~2]{Kreiman:Eq}),
after (re)conjugating the shapes, that if we take $\lambda,\mu\subseteq\Lambda$ then
$s_{\lambda}\cdot s_{\mu} =\sum_{\nu\subseteq \Lambda}C_{\lambda,\mu}^{\nu}(t_j\mapsto -y_j)s_{\nu}$.
We will not need this identification.

Let ${\tt SSYTeqwt}(T)$ be the result of the substitution $x_j\mapsto t_{w'(j)}, y_j\mapsto t_j$.
Define ${\mathcal A}$ to be the set of semistandard and lattice tableaux $T$ of shape $\lambda/\lambda$ and content $\mu$ such
that ${\tt apwt}(T)\neq 0$. Define ${\mathcal B}$ to be the set of semistandard tableaux $U$ of shape $\mu'$ where
${\tt SSYTeqwt}(U)\neq 0$.

It remains to prove the following:

\begin{Claim}
There is a weight-preserving bijection $\phi:{\mathcal A}\to {\mathcal B}$
 where if $T\in \mathcal A$ then ${\tt apwt}(T)=\overline{{\tt SSYTeqwt}(\phi(T))}$.
\end{Claim}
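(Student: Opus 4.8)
The plan is to make the bijection completely explicit in ``column coordinates'' and then match weights box by box. Since $\lambda/\lambda$ has no boxes, a semistandard filling $T$ of $\lambda/\lambda$ of content $\mu$ is just a choice, for each value $i$, of $\mu_i$ of the horizontal edges of $\lambda/\lambda$ to carry the label $i$ (every semistandard axiom is vacuous). Each column $c\in\{1,\dots,\lambda_1\}$ of $\lambda$ carries exactly one boundary horizontal edge, the lower edge of the box $(\lambda'_c,c)$; so, once we know that every label of $T$ lies on such an edge, recording where $i$ appears amounts to a strictly decreasing sequence $c^{(i)}_1>c^{(i)}_2>\dots>c^{(i)}_{\mu_i}$ of columns, $c^{(i)}_p$ being the column of the $p$-th copy of $i$ read from the right. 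I would first record the two elementary translations: $T$ is lattice if and only if $c^{(i)}_q\ge c^{(i+1)}_q$ for all $q$; and ${\tt apwt}(T)\ne 0$ if and only if no edge label is too high, which (since $\lambda/\lambda$ has no boxes, so the resuscitation clause in the definition of ``too high'' is automatic) holds if and only if $\lambda'_{c^{(i)}_p}\ge i$, equivalently $c^{(i)}_p\le\lambda_i$, for every $(p,i)$. Once $\lambda'_{c^{(i)}_p}\ge i$, the exponent $r-i+1+\#\{i\text{'s strictly to the right}\}$ in (\ref{eqn:apfactorusual}) equals $\lambda'_{c^{(i)}_p}-i+p\ge 1$, so every ${\tt apfactor}$ is a genuine binomial $t_e-t_f$ with $e<f$ and ${\tt apwt}(T)$ is a nonzero polynomial.

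I would then define $\phi$: send $T\in\mathcal A$ to the array $U$ of shape $\mu'$ (column $i$ of $\mu'$ has exactly $\mu_i$ boxes) whose entry in row $p$ of column $i$ is $a^{(i)}_p:=(n-k)-c^{(i)}_p+1$. The semistandard-tableau axioms for $U$ fall out of the translations above: column $i$ of $U$ strictly increases downward because the $c^{(i)}_p$ strictly decrease; the rows of $U$ weakly increase precisely because $T$ is lattice; and all entries lie in $\{(n-k)-\lambda_1+1,\dots,n-k\}\subseteq\{1,\dots,n-k\}$. The same formula $c^{(i)}_p=(n-k)-a^{(i)}_p+1$ produces the inverse $\psi$, defined on semistandard tableaux of shape $\mu'$ with entries at most $n-k$ (the tableaux summed by $\mathfrak S_{v'}$); that $\psi$ is well-defined on all of $\mathcal B$ — the columns it produces lie in $\{1,\dots,\lambda_1\}$ and the filling it produces lies in $\mathcal A$ — is exactly the content of the final step, after which mutual inverseness is immediate.

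The weight identity I would prove is that $\phi$ matches ${\tt apfactor}$ with the barred tableau weight box by box. For $T\in\mathcal A$ the $p$-th copy of $i$ from the right sits on the lower edge of $\x=(\lambda'_c,c)$ with $c=c^{(i)}_p$, so $r=\lambda'_c$, ${\tt Man}(\x)=k-\lambda'_c+c$, and it has exactly $p-1$ copies of $i$ to its right; by (\ref{eqn:apfactorusual}),
\[{\tt apfactor}(i)=t_{k-\lambda'_c+c}-t_{k+c-i+p}.\]
The box of $U=\phi(T)$ in row $p$, column $i$ has value $a=(n-k)-c+1$ and $j(\cdot)=i-p$, so it contributes $x_a-y_{a+i-p}$, that is $t_{w'(a)}-t_{a+i-p}$ after the substitution $x_j\mapsto t_{w'(j)}$, $y_j\mapsto t_j$. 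Since $1\le a\le n-k$, the Grassmannian formula gives $w'(a)=a+\lambda'_{(n-k)-a+1}=a+\lambda'_c$ (using $(n-k)-a+1=c$); applying the bar $t_j\mapsto t_{n-j+1}$ and simplifying with $n-a+1=k+c$ turns this factor into $t_{k-\lambda'_c+c}-t_{k+c-i+p}$, exactly ${\tt apfactor}(i)$. Multiplying over all boxes gives ${\tt apwt}(T)=\overline{{\tt SSYTeqwt}(\phi(T))}$; in particular ${\tt SSYTeqwt}(\phi(T))\ne 0$, so $\phi(\mathcal A)\subseteq\mathcal B$.

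The remaining, and hardest, step is $\psi(\mathcal B)\subseteq\mathcal A$: one must show that ${\tt SSYTeqwt}(U)\ne 0$ forces $\lambda'_{c^{(i)}_p}\ge i$ for all $(p,i)$, where $c^{(i)}_p=(n-k)-a^{(i)}_p+1$. The difficulty is that nonvanishing only says, box by box, that $w'(a^{(i)}_p)\ne a^{(i)}_p+(i-p)$, i.e.\ $\lambda'_{c^{(i)}_p}\ne i-p$, which is weaker than $\lambda'_{c^{(i)}_p}\ge i$. I would push this through by monotonicity: set $m^{(i)}_p:=\lambda'_{c^{(i)}_p}$; since $\lambda'$ is weakly decreasing and $c^{(i)}_p$ is decreasing in $p$ and decreasing in $i$ (the latter because the rows of $U$ weakly increase), $m^{(i)}_p$ is weakly increasing in both $p$ and $i$. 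Now induct on the column index $i$ of $\mu'$: $m^{(1)}_1\ne 0$ forces $m^{(1)}_1\ge 1$; for $i>1$, $m^{(i)}_1\ge m^{(i-1)}_1\ge i-1$ by monotonicity and the inductive hypothesis, which together with $m^{(i)}_1\ne i-1$ forces $m^{(i)}_1\ge i$; and then $m^{(i)}_p\ge m^{(i)}_1\ge i$ by monotonicity in $p$. Hence $\psi(U)\in\mathcal A$. Combining the three steps, $\phi$ and $\psi$ restrict to mutually inverse maps $\mathcal A\leftrightarrow\mathcal B$ with ${\tt apwt}(T)=\overline{{\tt SSYTeqwt}(\phi(T))}$, the asserted weight-preserving bijection.
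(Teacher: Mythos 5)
Your construction is the paper's bijection written in different coordinates: the paper labels the columns of $\Lambda$ by $(n-k),(n-k)-1,\dots,1$ from left to right and reads the word ${\tt col}(T)$ into $\mu'$ bottom-to-top along columns, which is exactly your rule $a^{(i)}_p=(n-k)-c^{(i)}_p+1$ placed in row $p$ of column $i$; and your factor-by-factor identification of ${\tt apfactor}$ with $\overline{{\tt SSYTeqfactor}}$ via ${\tt Man}(\x)=k-\lambda'_c+c$ is the same computation the paper performs. So the approach is essentially identical.

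The one place you go beyond the published argument is the step you correctly flag as hardest. The paper disposes of well-definedness of $\phi^{-1}$ by asserting $0\neq\overline{{\tt SSYTeqwt}(U)}={\tt apwt}(\phi^{-1}(U))$, but this tacitly assumes that no edge label of $\phi^{-1}(U)$ is too high — otherwise ${\tt apwt}$ is \emph{declared} zero irrespective of the factors — and, as you observe, box-by-box nonvanishing of ${\tt SSYTeqwt}(U)$ only gives $\lambda'_{c^{(i)}_p}\neq i-p$, which is strictly weaker than the needed $\lambda'_{c^{(i)}_p}\geq i$. Your monotonicity-plus-induction argument (weak monotonicity of $m^{(i)}_p=\lambda'_{c^{(i)}_p}$ in both $p$ and $i$, so that $m^{(i)}_1\geq m^{(i-1)}_1\geq i-1$ combines with $m^{(i)}_1\neq i-1$ to force $m^{(i)}_1\geq i$, and then $m^{(i)}_p\geq m^{(i)}_1$) closes this gap correctly and also shows the columns land in $\{1,\dots,\lambda_1\}$, so the inverse genuinely lands in $\mathcal A$. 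This is a worthwhile tightening of the paper's proof rather than a different proof.
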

\begin{proof}
  Define $\phi$ as follows. Label the columns of $\Lambda=k\times (n-k)$
by $(n-k),(n-k)-1,\ldots,3,2,1$ \emph{from left to right}.
Given $T$, let ${\tt col}(T)$ be the word $c_1 c_2 \cdots c_{|\mu|}$ obtained by
recording the column indices of the $1$'s (from left to right), $2$'s (from left to
right) etc. Now let $\phi(T)$
be obtained by placing this word into the boxes of shape $\mu'$ from bottom to top along columns,
and from left to right (noting there are $\mu_i$ labels $i$ in $T$ for each $i$).
We have a candidate inverse map $\phi^{-1}:{\mathcal B}\to {\mathcal A}$ obtained by reading $U\in {\mathcal B}$ in the same way
and placing edge labels on the bottom edge of $\lambda/\lambda$: the placement of
the $i$'s is determined by the labels in column $i$ of $U$.

\begin{Example}
Let $n=7$, $k=3$, $\lambda=(4,2,1)$ and $\mu=(4,2)$. Then $T$, together with the column labels
${\sf 1},\ldots,{\sf 4}$ and $\phi(T)$ are depicted below:
\[\begin{picture}(240,63)
\put(50,58){${\sf 4}$}
\put(70,58){${\sf 3}$}
\put(88,58){${\sf 2}$}
\put(108,58){${\sf 1}$}
\put(20,35){$T=\tableau{{\ }&{ \ }&{\ }&{ \ }\\{\ }&{\ }\\{\ }}$}
\put(45,-7){$1,2$}
\put(65,13){$1,2$}
\put(88,31){$1$}
\put(108,31){$1$}
\put(140,28){$\mapsto$}
\put(160,28){$\phi(T)=$}
\put(200,38){$\ktableau{{1}&{3}\\{2}&{4}\\{3}\\{4}}$}
\end{picture}
\]
Here we had ${\tt col}(T)=4 3 2 1 4 3$.

We compute
\[{\tt apwt}(T)=(t_1-t_7)(t_3-t_7)(t_5-t_7)(t_6-t_7)(t_1-t_4)(t_3-t_4),\]
where the first four factors correspond to the labels $1$ of $T$ from left to right and
the last two factors correspond to the labels $2$ of $T$ from left to right. Now,
\[{\tt SSYTwt}(\phi(T))=(x_4-y_1)(x_3-y_1)(x_2-y_1)(x_1-y_1)(x_4-y_4)(x_3-y_4),\]
where the factors correspond to the entries of $\phi(T)$ as read up columns from left
to right (i.e., consistent with the order of factors of ${\tt apwt}(T)$ above).

Since $\lambda'=(3,2,1,1)$ and $\mu'=(2,2,1,1)$ we have $w'=2357146$ and $v'=2356147$
(one line notation). So substituting, we get
\[{\tt SSYTeqwt}(\phi(T))=(t_7-t_1)(t_5-t_1)(t_3-t_1)(t_2-t_1)(t_7-t_4)(t_5-t_4).\]
Finally, the reader can check ${\overline{{\tt SSYTeqwt}(T)}}={\tt apwt}(T)$, in agreement
with the Claim.\qed
\end{Example}

($\phi^{-1}$ is well-defined and is weight-preserving): Let $U\in {\mathcal B}$.
Since $\phi^{-1}(U)$ is of shape $\lambda/\lambda$, it is vacuously standard.
The fact that $U$ is semistandard easily implies that $\phi^{-1}(U)$ is lattice.

We check that the weight assigned to a label $\ell$ in box ${\sf b}$ and column $c={\tt col}(\bbb)$ of $U$ is the same as the ${\tt apfactor}$ assigned to the corresponding label $c$ in $\phi^{-1}(U)$. The label $\ell$ gets assigned the weight
${\tt SSYTeqfactor}=t_{\lambda_{(n-k)-\ell+1}'+\ell}-t_{\ell+j(\bbb)}$. Hence we must show the equality of these two quantities:
\begin{eqnarray*}
\overline{{\tt SSYTeqfactor}(\ell)}&=&t_{n-(\lambda_{(n-k)-\ell+1}'+\ell)+1}-t_{n-(\ell+j({\sf b}))+1}, \textrm{ and}\\
{\tt apfactor}(c)&=&t_{{\tt Man}(\x)}-t_{{\tt Man}(\x)+r-c+1+
\mbox{$\#$ of $c$'s strictly to the right of $\x$}},\end{eqnarray*}
where here $\x$ is the bottom edge of $\lambda$ in column $\ell$
from the right edge of $\Lambda$ and $r=\lambda_{(n-k)-\ell+1}'$.

Now, counting the number of columns and rows which separate $\x$ from the
bottom-left corner of $\Lambda$, we have
\[{\tt Man}(\x)=((n-k)-\ell)+(k-\lambda_{(n-k)-\ell+1}'+1)=n-(\lambda_{(n-k)-\ell+1}'+\ell)+1.\]
Thus, the first term of ${\overline{{\tt SSYTeqfactor}(\ell)}}$ and ${\tt apfactor}(c)$
agree. To compare the second terms note that
\begin{multline}\nonumber
{\tt Man}(\x)+r-c+1+ \mbox{$\#$ of $c$'s strictly to the right of $\x$}=\\
[n-(\lambda_{(n-k)-\ell+1}'+\ell)+1]+\lambda_{(n-k)-\ell+1}'-c+1+\mbox{$\#$ of $c$'s strictly to the right of $\x$}\\
=n-\ell+1-c+1+\mbox{$\#$ of $c$'s strictly to the right of $\x$}
\end{multline}
Hence it suffices to show
\[-j({\sf b})=-c+1+\mbox{$\#$ of $c$'s strictly to the right of $\x$},\]
or equivalently,
\[{\rm row}({\sf b})-1=\mbox{$\#$ of $c$'s strictly to the right of $\x$}.\]
However, this final equality is clear by the definition of $\phi^{-1}$.

Thus $0\neq \overline{{\tt SSYTeqwt}(U)}={\tt apwt}(\phi^{-1}(U))$ and we are done.

($\phi$ is well-defined and weight-preserving): Let $T\in \mathcal A$.
By construction, $\phi(T)$ is strictly increasing along columns.
%

Now suppose $\phi(T)$ is not weakly increasing along rows. Thus there is a violation between columns $c+1$ and $c$. We may suppose $c+1$ is
the leftmost column of $\Lambda$, recalling the reverse labelling of columns;
the general argument is similar. Now suppose the violation occurs
$M$ rows from the top. Hence in $T$, the $M$-th label $1$ (counting from the
right) is in a column strictly to the left of the label
$M$-th label $2$. Then it must be true that $T$ is not lattice.

Hence $\phi(T)$ is a semistandard tableau of shape $\mu'$. The same computations showing $\phi^{-1}$
is weight preserving shows $0\neq {\tt apwt}(T)=\overline{{\tt SSYTeqwt}(\phi(T))}$ and so the desired
conclusions hold.
\end{proof}
\excise{
We now show that $C_{\lambda,\mu}^{\lambda}=D_{\lambda,\mu}^{\lambda}$, a fact we needed in the above proof of
Theorem~\ref{thm:DequalsC}.

For $\lambda \subseteq \Lambda=k\times (n-k)$, the
{\bf Grassmannian permutation} associated to $\lambda$
is the permutation $\pi(\lambda)\in S_n$ uniquely defined by $\pi(\lambda)_i=i+\lambda_{k-i+1}$
for $1\leq i\leq k$ and which has at most one descent, which (if it exists) appears at position $k$.

Let $w,v\in S_n$ be the Grassmannian permutations associated to $\lambda,\mu$ respectively.
Also, let $w',v'\in S_n$ be the Grassmannian permutations for the conjugate shapes $\lambda',\mu'\subseteq (n-k)\times k$.
The following
identity relates $C_{\lambda,\mu}^{\lambda}$ to the localization at $e_{\mu}$ of the class ${\mathcal S}_{\lambda}$,
as expressed in terms of a specialization of the double Schubert polynomial. It is well known to experts;
it can be proved (in the conventions we use) by, e.g.,
combining \cite[Lemma~4]{Knutson.Tao} and \cite[Theorem~4.5]{Woo.Yong:KLideals}:
\[{\overline{C_{\lambda',\mu'}^{\lambda'}(Gr_{n-k}({\mathbb C}^n))}}={\mathfrak S}_{w}(y_{v(1)},\ldots,y_{v(n)};y_1,\ldots,y_n).\]
Here ${\overline{C_{\lambda',\mu'}^{\lambda'}(Gr_{n-k}({\mathbb C}^n))}}$ is the polynomial obtained from $C_{\lambda',\mu'}^{\lambda'}(Gr_{n-k}({\mathbb C}^n))$ under the substitution $t_j\mapsto t_{n-j+1}$.
We refer the reader to \cite{Manivel} for background about Schubert polynomials; however, we will only use a subset of the theory,
as we describe now.

Since $w$ is Grassmannian, we have
\[{\mathfrak S}_{w}(X;Y)=\sum_{T}{\tt SSYTwt}(T)\]
where the sum is over all (ordinary) semistandard Young tableau $T$ of shape $\lambda$ with entries bounded above by $k$.
Also ${\tt SSYTwt}(T)=\prod_{b\in \lambda}(x_{{\rm val}(b)}-y_{{\rm val}(b)+j(b)})$ where $j(b)={\rm col}(b)-{\rm row}(b)$.
This formula is well-known and may be taken as the definition of the
{\bf factorial Schur function} that we mention in passing in the introduction; see, e.g., \cite[Theorem~5.8]{KMY} for a general form. We remark the structure coefficients for the expansion of the product of two factorial Schur polynomials into factorial Schurs are known to be given by equivariant Schubert structure constants (after a substitution).

Let ${\tt SSYTeqwt}(T)$ be the result of the substitution $x_j\mapsto t_{v(j)}, y_j\mapsto t_j$.
Define ${\mathcal A}$ to be the set of semistandard and lattice tableaux $T$ of shape $\lambda'/\lambda'$
that rectify to the highest weight tableau
$S_{\mu'}$ and define ${\mathcal B}$ to be the set of semistandard tableaux $U$ of shape $\mu$ such that
${\tt SSYTeqwt}(U)\neq 0$.

It remains to prove the following:

\begin{Claim}
There is a weight-preserving bijection $\phi:{\mathcal A}\to {\mathcal B}$;
i.e., a bijection $\phi$ such for $T\in \mathcal A$, we have
${\overline{{\tt apwt}(T)}}={\tt SSYTeqwt}(\phi(T))$.
\end{Claim}
\begin{proof}
  We define $\phi$ as follows. Label the columns of $\Lambda'=(n-k)\times k$
by $k,k-1,\ldots,1$ from \emph{left to right}.
Given $T$, let ${\tt col}(T)$ be the word $c_1 c_2 \cdots c_{|\mu'|}$ obtained by
recording the column indices of the $1$'s (from left to right), $2$'s (from left to
right) etc. Now let $\phi(T)$
be obtained by placing this word into the boxes of shape $\mu$ from bottom to top along columns,
and from left to right (noting there are $\mu_i'$ labels $i$ in $T$ for each $i$).
We have a candidate inverse map $\phi^{-1}:{\mathcal B}\to {\mathcal A}$ obtained by reading $U\in {\mathcal B}$ in the same way
and placing edge labels on the bottom edge of $\lambda'/\lambda'$: the placement of
the $i$'s is determined by the labels in column $i$ of $U$.


($\phi$ is well-defined): By construction, $\phi(T)$ is column semistandard.
%

Now suppose $\phi(T)$ is not row semistandard. Thus there is a violation between columns $c+1$ and $c$. We may suppose $c+1$ is
the leftmost column of $\Lambda'$, recalling the reverse labelling of columns;
the general argument is similar. Now suppose the violation occurs
$M$ rows from the top. Hence in $T$, the $M$-th label $1$ (counting from the
right) is in a column strictly to the left of the label
$M$-th label $2$. Then it must be true that $T$ is not lattice.

Hence $\phi(T)$ is a semistandard tableau of shape $\mu$. Thus $\phi$ is well-defined.

($\phi^{-1}$ is well-defined and is weight-preserving):
Since $\phi^{-1}(U)$ is of shape $\lambda'/\lambda'$, it is vacuously standard.
Thus it remains to prove that $T=\phi^{-1}(U)$ column rectifies to $S_{\mu'}$. In fact,
it is sufficient to show that ${\tt apwt}(T)\neq 0$, since we have shown that
that condition implies $T$ rectifies to $S_{\mu'}$.

    The fact that $U$ is semistandard easily implies that $T$ is lattice.
Also, no edge label $i$ of $T$ is too high or nearly too high,
since that would quickly imply (by analysis of the weight formulas, as below) that ${\tt SSYTeqwt}(U)=0$, a contradiction of our assumption.

Thus it remains to show that the weight assigned to a label $\ell$ in box $b$ and column $c={\tt col}(\ell)$ of $U$ is the same as the ${\tt apfactor}$ assigned to the corresponding label $c$ in $\phi^{-1}(T)$. The label $\ell$ gets assigned the weight
${\tt SSYTeqfactor}=t_{\lambda_{k-\ell+1}+\ell}-t_{\ell+j(b)}$. Hence we must show
\[\overline{{\tt SSYTeqfactor}(\ell)}=t_{n-(\lambda_{k-\ell+1}+\ell)+1}-t_{n-(\ell+j(b))+1}.\]
is equal to
\[{\tt apfactor}(c)=t_{{\tt Man}(\x)}-t_{{\tt Man}(\x)+r-c+1+
\mbox{$\#$ of $c$'s strictly to the right of $\x$}},\]
where here $\x$ is the bottom edge of $\lambda'$ in column $\ell$
from the right edge of $\Lambda'$ and $r=\lambda_{k-\ell+1}'$.

Now, counting the number of columns and rows which separate $\x$ from the
bottom-left corner of $\Lambda'$, we have
\[{\tt Man}(\x)=(k-\ell)+(n-k-\lambda_{k-\ell+1}+1)=n-(\lambda_{k-\ell+1}+\ell)+1.\]
Thus, the first term of ${\overline{{\tt SSYTeqfactor}(\ell)}}$ and ${\tt apfactor}(c)$
agree. To compare the second terms note that
\begin{multline}\nonumber
{\tt Man}(\x)+r-c+1+ \mbox{$\#$ of $c$'s strictly to the right of $\x$}=\\
[n-(\lambda_{k-\ell+1}+\ell)+1]+\lambda_{k-\ell+1}-c+1+\mbox{$\#$ of $c$'s strictly to the right of $\x$}\\
=n-\ell+1-c+1+\mbox{$\#$ of $c$'s strictly to the right of $\x$}
\end{multline}
Hence it suffices to show
\[-j(b)=-c+1+\mbox{$\#$ of $c$'s strictly to the right of $\x$},\]
or equivalently,
\[{\rm row}(b)-1=\mbox{$\#$ of $c$'s strictly to the right of $\x$}.\]
However, this final equality is clear by the definition of $\phi^{-1}$.

($\phi$ is weight preserving): The same computations that show $\phi^{-1}$ is weight preserving also show $\phi$ is weight preserving.
\end{proof}
}
\subsection{Proof of Theorem~\ref{thm:main}}

Let ${\mathcal C}$ be the set of lattice semistandard tableaux $S$ of shape $\nu/\lambda$ whose content
is $\mu$ and ${\tt apwt}(S)\neq 0$. Also, let ${\mathcal D}$ be the set of tableaux from
Theorem~\ref{thm:main}. Define a map $\Phi:{\mathcal C}\to {\mathcal D}$ as follows: given $S\in {\mathcal C}$
relabel the $\mu_1$ labels $1$ that appear by $1,2,\ldots,\mu_1$, from left to right; then relabel the $\mu_2$
(original) labels $2$ by $\mu_1+1,\mu_1+2,\ldots,\mu_1+\mu_2$, etc. This map is clearly reversible.
Theorem~\ref{thm:main}  follows from:

\begin{Proposition}
$\Phi:{\mathcal C}\to {\mathcal D}$ is a weight preserving bijection:  ${\tt apwt}(S)={\tt wt}(\Phi(S))$.
\end{Proposition}
\begin{proof}
($\Phi$ is well-defined): Since $S\in {\mathcal C}$ is semistandard, clearly $T=\Phi(S)$ is standard.

Let $T_{\mu}[i]$ be the set of labels in row $i$ of $T_{\mu}$. By construction, the labels of
$T_{\mu}[i]$ form a horizontal strip in $T$. The following is an easy induction using the
definition of ${\tt Ejdt}$:

\begin{Claim}
\label{claim:horizstripCD}
The labels of $T_{\mu}[i]$ form a horizontal strip in each tableau arising in the process of column rectifying $T$.
\end{Claim}

Translating the assumption that $S$ is lattice, for any column $c$ of $T$, the number of labels from $T_{\mu}[i]$ appearing in columns weakly to the right of column $c$ weakly exceeds the number from $T_{\mu}[i+1]$ in the same region, for any $i\geq 1$. Mildly abusing terminology, we say that $T$ is also lattice.

\begin{Claim}
\label{claim:latticeCD}
Each tableau appearing in the column rectification of $T$ is lattice.
\end{Claim}
\begin{proof}
Suppose that in the process of column rectification we arrive at a tableau $U$ (which may have a $\bullet$ in the middle of it) which is lattice and the next swap $U\mapsto U'$ breaks latticeness.
Then this swap must locally look like
$U=\ktableau{{a}&{b}\\{\bullet}&{c}\\{d}&{e}}\mapsto
\ktableau{{a}&{b}\\{c}&{\bullet}\\{d}&{e}}=U'$
where $c\in T_{\mu}[i]$ moving left causes more labels of $T_{\mu}[i+1]$ than of $T_{\mu}[i]$ to appear weakly right of column $2$ of $U'$. So there must be a label $\ell$ of $T_{\mu}[i+1]$ in
column $2$ of $U$ (and of $U'$), since otherwise $U$ is not lattice, a contradiction.

Suppose $e$ does not exist. Then since $U$ is standard, $\ell$ cannot
exist, a contradiction. Hence we assume $e$ and thus $d$ exists.
By standardness of $U$ and Claim~\ref{claim:horizstripCD}, $e(=\ell)\in T_{\mu}[i+1]$.
Notice that no label of column $1$ of $U$ can be in $T_{\mu}[i]$ since
we would contradict Claim~\ref{claim:horizstripCD} (applied to $U'$). Now $d>c$ (since otherwise the swap
would not have been used). So by standardness and Claim~\ref{claim:horizstripCD} (applied to $U$), $d\in T_{\mu}[i+1]$. But then $U$ was not lattice in column $1$ to
begin with. This is our final contradiction.
\excise{Suppose $e$ does not exist. Then since $U$ is standard $\ell$ cannot exist, a contradiction. Hence we assume $e$, and thus
$d$, exists. Now $d>c$ (since otherwise the swap would not have been used), but $d\not\in T_{\mu}[i]$ since that would violate Claim~\ref{claim:horizstripCD}. So $d\in T_{\mu}[j]$
for some $j\geq i+1$. If $j=i+1$ no label in column $1$ of $U$ is in $T_{\mu}[i]$ (by standardness of $U$ combined with Claim~\ref{claim:horizstripCD}). This is a contradiction since we conclude $U$ was not lattice in the first column.  So $j>i+1$ but then by standardness of $U$,
$e\in T_{\mu}[k]$ for $k>i+1$, so again $\ell$ cannot exist, our final contradiction.}
\end{proof}

Write $T^{(k)}$ for the tableau that consists of
the $k$ rightmost columns of the column rectification of the $k$
rightmost columns of $T$.

\begin{Claim}
\label{claim:comparison}
The $i$-th row of $T^{(k)}$ is a consecutive sequence of integers from $T_{\mu}[i]$, ending with $\mu_1+\dots+\mu_i(=\max T_{\mu}[i])$.
\end{Claim}
\begin{proof}
The argument is by induction on $k\geq 0$. The base case $k=0$ is trivial. Suppose after rectifying the $k-1$ rightmost columns of $T$,
$T^{(k-1)}$ has the claimed form. Now we are rectifying column $k$ (from the right). Suppose we are ${\tt Ejdt}$ sliding into a square ${\sf x}$ in row $R$ and the slide
${\tt Ejdt}_{\sf x}$ is a horizontal one (i.e., a label moves left). Observe that in this case,
the $\bullet$ must only move right in the same row until the slide completes: otherwise, by the form of $T^{(k-1)}$, it must be that the rows $R$ and $R+1$ of $T^{(k)}$ are of the same length, and the
rightmost label of row $R+1$ moves up into row $R$; however this contradicts Claim~\ref{claim:latticeCD}.

Suppose the labels in the column we are presently rectifying are $\ell_1<\ell_2<\ldots <\ell_t$.
Now $\ell_m\in T_{\mu}[i_m]$ where $i_1<i_2<\ldots<i_t$. By the form of $T^{(k-1)}$, it is easy to see $\ell_m$ completes at row $i_m$. Now, by Claim~\ref{claim:horizstripCD} it follows that $\ell_m$ is the largest label of $T_{\mu}[i_m]$ that does not appear in $T^{(k-1)}$. This completes the
induction step.
\end{proof}

        Claim~\ref{claim:comparison} immediately shows ${\tt Erect}(T)=T_{\mu}$, as desired.

($\Phi^{-1}$ is well-defined): Let $T\in {\mathcal D}$. Let $S=\Phi^{-1}(T)$;
proving well definedness means we need to show $S$ is semistandard, lattice and ${\tt apwt}(S)\neq 0$ (the content of of $S$ being $\mu$ is by
construction).

\begin{Claim}
\label{claim:horizontal}
The labels $T_{\mu}[i]$ form a horizontal strip in $T$, as well as in each tableau $T'$ in the
column rectification of $T$.
\end{Claim}
\begin{proof}
Suppose $j$ and $j+1$ appear in the same row of $T_\mu$. Then we claim that $j+1$ is
strictly east (and, by standardness of $T$, thus weakly north) of $j$ in $T$ (respectively, $T'$). Otherwise,
if this is false, it remains false after each ${\tt Ejdt}$ step. This implies ${\tt Erect}(T)\neq T_{\mu}$, a contradiction.
\end{proof}

Given Claim~\ref{claim:horizontal}, the semistandardness of $S$ is clear.

Next we argue that $S$ is lattice. Otherwise, there is a column $c$ and label $i$ such that
${\mathcal N}_{{\rm col}\ c, i+1}^S>{\mathcal N}_{{\rm col}\ c, i}^S$.
We may assume $c$ is rightmost with this property. Hence $T$ is not lattice.

\begin{Claim}
\label{claim:everyswap}
Assuming (for the sake of contradiction) that $T$ is not lattice, it follows
that after every swap in the process that column rectifies $T$ to ${T}_{\mu}$, the resulting tableau is also not lattice.
\end{Claim}
\begin{proof}
Without loss of generality, it suffices
to argue about the first swap applied to $T$. If the result $T^{\circ}$ is lattice then there
is a label $\ell\in {T}_{\mu}[i+1]$ in column $c$ of $T$ that moved to the column $c-1$. Locally, the swap looks like
$\ktableau{{a}&{b}\\{\bullet}&{\ell}} \to \ktableau {a&b\\ \ell&\bullet}$.
By Claim~\ref{claim:horizontal}, the labels of ${T}_{\mu}[i+1]$ form a horizontal strip in $T$.
Hence $a,b\not\in {T}_{\mu}[i+1]$. Also, no label in column $c$ is in ${T}_{\mu}[i]$
since otherwise there is a violation of latticeness strictly to the right of column $c$ that is not fixed by this swap. Now, some label $m$ in column $c-1$ is in ${T}_{\mu}[i]$ (since we have fixed non-latticeness by the swap). This $m$ cannot be below the $\bullet$ since $\ell>m$ so $m$ would move into the $\bullet$
instead of $\ell$.  Hence $a=m$. Now what about $b$? We have excluded the possibility that $b\in {T}_{\mu}[i]\cup
{T}_{\mu}[i+1]$. However, by standardness of $T$, there are no other possibilities for $b$. This is
a contradiction and $T^{\circ}$ is not lattice.
\end{proof}

Thus, by Claim~\ref{claim:everyswap}, $T_{\mu}$ is not lattice, a contradiction. Hence $S$ is lattice.

Finally, in the weight preservation argument below, we see ${\tt apwt}(S)={\tt wt}(T)$. Thus we have ${\tt apwt}(S)\neq 0$ since by construction ${\tt wt}(T)\neq 0$.

($\Phi$ and $\Phi^{-1}$ are weight preserving): Suppose $T\in {\mathcal D}$
and we consider a label $\ell$ in that column which finishes in row $i$.
Claim~\ref{claim:comparison} (and its proof) shows that the labels to the right (and in the same row) of $\ell$ (once it completed rectifying in its column) are precisely those to its right in $T_\mu$, and moreover than any edge label rises exactly to its row in $T_{\mu}$
(although it may move left in that row in subsequent column rectifications). Hence by the definition of ${\tt apfactor}$, if $\ell'$ is the corresponding label in $S=\Phi^{-1}(T)$ then ${\tt factor}(\ell)={\tt apfactor}(\ell')$. So
${\tt wt}(T)={\tt apfactor}(\Phi^{-1}(T))$. Thus $\Phi^{-1}$ is weight-preserving. Reversing the
argument shows $\Phi$ is weight preserving.
\end{proof}

\excise{\begin{Proposition}
$\Phi:{\mathcal C}\to {\mathcal D}$ is a weight preserving bijection:  ${\tt apwt}(S)={\tt wt}(\Phi(S))$.
\end{Proposition}
\begin{proof}
($\Phi$ is well-defined): Since $S\in {\mathcal C}$ is semistandard, clearly $\Phi(S)$ is standard.

\begin{Claim}
\label{claim:krightmost}
Let $S\in {\mathcal C}$. The result of rectifying the $k$ rightmost columns
of $S$ is a formal sum in
which, among the tableaux with non-zero coefficients, there is exactly one,
$H$,
in which there is no label that is too high.
Moreover, the rightmost $k$
columns of $H$ form a highest weight tableau.
\end{Claim}
\begin{proof}
Since $S$ is lattice, the $k$ rightmost columns of $S$
are lattice, and Theorem~\ref{thm:AB} applies to this part of $S$.  The
lemma follows from parts (I) and (III) of the theorem.
\end{proof}

    Although Claim~\ref{claim:krightmost} is about ${\tt Eqrect}$ we will use it to
show that $T=\Phi(S)$ satisfies ${\tt Erect}(T)=T_{\mu}$.
Write $T^{(k)}$ for the tableau that consists of
the $k$ rightmost columns of the column rectification of the $k$
rightmost columns of $T$, and let  $S^{(k)}$ be the $k$ rightmost columns of the tableau $H$ from Claim~\ref{claim:krightmost}.
The following claim asserts that ${\tt Ejdt}$ and ${\tt Eqjdt}$ behave the same way when rectifying right to left.

\begin{Claim}
\label{claim:comparison}
\begin{itemize}
\item[(i)] The $i$-th row of $T^{(k)}$ is a consecutive sequence of integers from $T_{\mu}[i]$, ending with $\mu_1+\dots+\mu_i(=\max T_{\mu}[i])$.
\item[(ii)] The shape of $T^{(k)}$ is the same as the shape of $S^{(k)}$.
\end{itemize}
\end{Claim}
\begin{proof}
In view of Claim~\ref{claim:krightmost} and the definitions of $\Phi$, ${\tt Eqjdt}$ and ${\tt Ejdt}$ this is a straightforward induction on $k$.
\end{proof}

        Now Claim~\ref{claim:comparison} immediately shows ${\tt Erect}(T)=T_{\mu}$, as desired.

($\Phi^{-1}$ is well-defined): Let $T\in {\mathcal D}$. Let $S=\Phi^{-1}(T)$;
proving well definedness means we need to show $S$ is semistandard, lattice and ${\tt apwt}(S)\neq 0$ (the content of of $S$ being $\mu$ is by construction). Let $T_{\mu}[i]$ be the set of
labels in row $i$ of $T_{\mu}$.

\begin{Claim}
\label{claim:horizontal}
The labels $T_{\mu}[i]$ forms a horizontal strip in $T$.
\end{Claim}
\begin{proof}
Suppose $j$ and $j+1$ appear in the same row of $T_\mu$. Then we claim that $j+1$ is
strictly east (and, by standardness of $T$, thus weakly north) of $j$ in $T$. Otherwise,
if this is false, it remains false after each ${\tt Ejdt}$ step. This implies ${\tt Erect}(T)\neq T_{\mu}$, a contradiction.
\end{proof}

Given Claim~\ref{claim:horizontal}, the semistandardness of $S$ is clear.

Next we argue that $S$ is lattice. Otherwise, there is a column $c$ and label $i$ such that
${\mathcal N}_{{\rm col} c, i+1}^S>{\mathcal N}_{{\rm col} c, i}^S$.
We may assume $c$ is rightmost with this property. Hence in $T$, there are more labels
from ${T}_{\mu}[i+1]$ in that region than
${T}_{\mu}[i]$. Mildly abusing terminology, we also say $T$ is not lattice.

\begin{Claim}
\label{claim:everyswap}
After every swap in the process that column rectifies $T$ to ${T}_{\mu}$, the resulting tableau is also not lattice.
\end{Claim}
\begin{proof}
Without loss of generality, it suffices
to argue about the first swap applied to $T$. If the result $T^{\circ}$ is lattice then there
is a label $\ell\in {T}_{\mu}[i+1]$ in column $c$ of $T$ that moved to the column $c-1$. Locally, the swap looks like
$\ktableau{{a}&{b}\\{\bullet}&{\ell}} \to \ktableau {a&b\\ \ell&\bullet}$.
By Claim~\ref{claim:horizontal}, the labels of ${T}_{\mu}[i+1]$ form a horizontal strip in $T$.
Hence $a,b\not\in {T}_{\mu}[i+1]$. Also, no label in column $c$ is in ${T}_{\mu}[i]$
since otherwise we contradict the extremality of $c$. Now, some label $m$ in column $c-1$ is in ${T}_{\mu}[i]$ (since we have fixed non-latticeness by the swap). This $m$ cannot be below the $\bullet$ since $\ell>m$ would move above it. Hence $a=m$. Now what about $b$? We have excluded the possibility that $b\in {T}_{\mu}[i]\cup
{T}_{\mu}[i+1]$. However, by standardness of $T$, there are no other possibilities for $b$. This is
a contradiction and $T^{\circ}$ is not lattice.
\end{proof}

Thus, by Claim~\ref{claim:everyswap}, $T_{\mu}$ is not lattice, a contradiction. Hence $S$ is lattice.

Finally, in the weight preservation argument below, we see ${\tt apwt}(S)={\tt wt}(T)$. Thus we have ${\tt apwt}(S)\neq 0$ since by construction ${\tt wt}(T)\neq 0$.

($\Phi$ and $\Phi^{-1}$ are weight preserving): Suppose $T\in {\mathcal D}$
and we consider a label $\ell$ in that column which finishes in row $i$.
Claim~\ref{claim:comparison}(i) says that the labels to the right (and in the same row) of $\ell$ (once it completed rectifying in its column) are precisely those to its right in $T_\mu$. Since we are performing a column rectification to $T_{\mu}$, $\ell$ is in the same row as it will be in $T_\mu$ (although it may move left in that row in subsequent column rectifications). Hence by the definition of ${\tt apfactor}$, if $\ell'$ is the label in $S=\Phi^{-1}(T)$ then ${\tt factor}(\ell)={\tt apfactor}(\ell')$. Hence
${\tt wt}(T)={\tt apfactor}(\Phi^{-1}(T))$. Thus $\Phi^{-1}$ is weight-preserving. Reversing the argument shows $\Phi$ is
weight preserving.
\end{proof}}

\section{Conjectural extension to equivariant $K$-theory}

The ring $K_{T}(Gr(k,{\mathbb C}^n))$ has a $K_{T}({\rm pt})$-basis of
equivariant $K$-theory classes
$\sigma_{\lambda}^K$ indexed by $\lambda\subseteq \Lambda$. Here
$K_T({\rm pt}):={\mathbb Z}[t_1^{\pm 1},t_2^{\pm 1},\ldots, t_n^{\pm 1}]$
is the Laurent polynomial ring in $t_1,\ldots,t_n$. Consequently, the {\bf equivariant
$K$-theory Schubert structure coefficients} are defined by the expansion
\begin{equation}
\label{eqn:KTexpansion}
\sigma_{\lambda}^K \cdot \sigma_{\mu}^K=\sum_{\nu}K_{\lambda,\mu}^{\nu}\sigma_{\nu}^K,
\end{equation}
where
$K_{\lambda,\mu}^{\nu}\in {\mathbb Z}[t_1^{\pm 1},t_2^{\pm 1},\ldots, t_n^{\pm 1}]$.

Earlier, a puzzle conjecture for these Laurent polynomials was given by
A.~Knutson-R.~Vakil and reported in \cite{Coskun.Vakil}. One aspect of their
conjecture is that it does not specialize to $K$-theory puzzle rules
(compare Sections 3 and 5 of \cite{Coskun.Vakil}
and see specifically the remarks of the fourth paragraph of the latter section). In contrast,
our conjecture transparently recovers the \emph{jeu de taquin} rules for $K$-theory, $T$-equivariant cohomology and ordinary cohomology, by ``turning off'' parts of our construction.

Recently, A.~Knutson \cite{Knutson:positroid} obtained a puzzle rule for an equivariant $K$-theory problem different than the one considered here (or in the Knutson-Vakil puzzle conjecture).

\subsection{Statement of the equivariant $K$-theory rule}
To state our conjectural generalization of Theorem~\ref{thm:main},
we need to broaden the class of equivariant tableaux. The ideas
contained below also generalize the
notions concerning \emph{increasing tableau} that we gave in our
earlier paper \cite{Thomas.Yong:V}, where a
\emph{jeu de taquin} rule for $K$-theory of Grassmannians was proved.

An {\bf equivariant increasing tableau} is an equivariant filling of
$\nu/\lambda$ by the labels $1,2,\ldots,\ell$ such that each label in a box
is:
\begin{itemize}
\item strictly smaller than the label in the box immediately to its right;
\item strictly smaller than the label in its southern edge, and the label in the
box immediately below it; and
\item strictly larger than the label in the northern edge.
\end{itemize}
Moreover, any subset of the boxes of $\nu/\lambda$
may be marked by a ``$\star$'', subject to:
\begin{itemize}
\item if the labels $i$ and $i+1$ appear as box labels in the same row of
$T$, then only the box containing $i+1$ may be marked by a ``$\star$''.
\end{itemize}
Let ${\tt EqINC}(\nu/\lambda,\ell)$ denote
the set of all equivariant increasing tableaux.

\begin{Example}
If $\nu/\lambda=(3,2)/(2)$ and $\ell=3$ the first two tableaux below are in
${\tt EqINC}(\nu/\lambda,\ell)$ while the third is not:

$ \ \ \ \ \ \ \ \ \ \ \ \ \ \ \ \ \ \ \ \ \ \ \ \ \ \ \ \ \ \ \ \ \ \ \ \ \ \  \ \ \ \
\begin{picture}(250,50)
\put(0,30){$\tableau{{\ }&{\ }&{1 }\\{1{\star} }&{3\star }}$}
\put(5,6){$2$}
\put(70,30){$\tableau{{\ }&{\ }&{2 }\\{1}&{2\star }}$}
\put(77,6){$2$}

\put(140,30){$\tableau{{\ }&{\ }&{2 }\\{1{\star} }&{2\star }}$}
\put(145,6){$2$}
\end{picture}
$\qed
\end{Example}

We also need an extension of the algorithms ${\tt Ejdt}$ and
${\tt Erect}$ defined in Section~1.

A {\bf short ribbon} $R$ is a connected skew shape that does not contain a
$2\times 2$ subshape and where each row and column contains at most two
boxes. An {\bf alternating ribbon} is a filling of $R$ by two symbols, say
$\alpha$ and $\beta$ such that
\begin{itemize}
\item adjacent boxes are filled differently;
\item all edges except the (unique) southmost edge are empty; and
\item if the southmost edge is filled, it is filled with a different symbol
than the symbol the in box above it.
\end{itemize}

\begin{Example}
The two types of alternating ribbons are of the form:
\[\begin{picture}(200,50)
\put(0,35){$\tableau{&&{\alpha}&{\beta}\\&{\alpha}&{\beta}\\{\alpha}&{\beta}}$ \ \ \ \ \ \ and }

\put(130,35){$\tableau{&&{\alpha}&{\beta}\\&{\alpha}&{\beta}\\{\alpha}&{\beta}}$}
\put(135,-7){$\beta$}
\end{picture}
\]
(where in the tableau on the right, the edge label $\beta$ is the smallest label
on that edge).
\end{Example}

We define ${\tt switch}(R)$ to be the alternating ribbon of the same shape
but where each box is instead filled with the other symbol. If the southmost
edge was filled by one of these symbols, that symbol is deleted.
If $R$ is a ribbon consisting of a single box with only one
symbol used, then ${\tt switch}$ does nothing to it. We also define
{\tt switch} to act on a skew shape consisting of multiple connected
components, each of which is a alternating ribbon, by acting on each
separately.

\begin{Example}
Applying {\tt switch} to either of the alternating ribbons above gives
\[\tableau{&&{\beta}&{\alpha}\\&{\beta}&{\alpha}\\{\beta}&{\alpha}}\]
\end{Example}

Given $T\in {\tt KEqInc}(\nu/\lambda,\ell)$, consider
an {\bf inner corner} $\x\in\lambda$ which we label with a $\bullet$.
Erase all $\star$'s appearing in $T$. Consider
the alternating ribbon made of $\bullet$ and $1$. (It is allowed
for the southmost edge of $R_1$ in $T$ consists of the label $1$ and other
labels as well.) Apply {\tt switch} to $R_1$. Now let $R_2$ be the union of
ribbons consisting of $\bullet$ and $2$, and proceed as before. Repeat this
process until the $\bullet$'s have been switched past all the numerical
labels in $T$; the final placement of these labels gives
${\tt KEjdt}_{\x}(T)$. Finally, define ${\tt KErect}(T)$ by
successively applying ${\tt KEjdt}$ in the column rectification order.

\begin{Example}
\label{exa:aKEqrect}
Erasing the $\star$'s in
\[\begin{picture}(200,50)
\put(0,30){$T=\tableau{{\ }&{\ }&{2}\\{1\star}&{4}}$ \ gives \
$\tableau{{\ }&{\ }&{2}\\{1}&{4}}$}
\put(31,7){$3$}
\put(50,27){$1$}
\put(128,7){$3$}
\put(146,27){$1$}
\end{picture}\]
There is nothing to do to rectify the third column. Rectifying the second
column is achieved in one step:
\[\begin{picture}(240,45)
\put(40,25){$\tableau{{\ }&{\bullet}&{2}\\{1}&{4}}\mapsto
\tableau{{\ }&{1 }&{2}\\{1}&{4}}$}
\put(46,2){$3$}
\put(65,22){$1$}
\put(122,2){$3$}
\end{picture}\]
while rectifying the first column demands three steps:
\[\begin{picture}(300,50)
\put(40,30){$\tableau{{\bullet }&{1}&{2}\\{1}&{4}}\mapsto
\tableau{{1 }&{\bullet }&{2}\\{\bullet}&{4}}\mapsto
\tableau{{1 }&{2 }&{\bullet}\\{\bullet}&{4}}\mapsto
\tableau{{1 }&{2 }&{\bullet}\\{3}&{4}}
$}
\put(46,7){$3$}
\put(122,7){$3$}
\put(198,7){$3$}
\end{picture}\]
which gives the final tableau $T_{(2,2)}$.\qed
\end{Example}

While the definition of ${\tt KErect}$ above does not depend on the markings of
boxes of $T$ by $\star$, these markings play a role in
our modification of the equivariant weight ${\tt wt}(T)$
defined in Section~1.3. We say that a label $\mathfrak s \in T$ is a {\bf special label}
if it is either
\begin{itemize}
\item an edge label; or
\item lies in a box that has been marked by a $\star$.
\end{itemize}
To each special label ${\mathfrak s}$ we associate a Laurent binomial ${\tt factor}_{K}({\mathfrak s})$: given a
box $\x$ define a weight ${\hat \beta}(\x)=t_m/t_{m+1}$ where $m$ is the ``Manhattan distance'' as defined in Section~1. Note that at most one of the labels
``${\mathfrak s}$'' or ``${\mathfrak s}\star$'' can appear in a column. Moreover, each step of the
rectification moves an ${\mathfrak s}$ at most one step north
(and it remains in the same column). Therefore one can precisely say a
special label ${\mathfrak s}$ {\bf passes}
through a box $\x$ if it occupies it during the $K$-equivariant
rectification of the column that ${\mathfrak s}$ initially occupies \emph{and}
if ${\mathfrak s}$ did not initially begin in $\x$.
(This notion of ``pass'' reduces to our
original notion in Section~1.4 if ${\mathfrak s}$ is an edge label.)
Now, let $\x_1,\ldots,\x_s$ be the boxes passed through by ${\mathfrak s}$ and
$\y_1,\ldots,\y_t$ be the numerically labelled boxes in the same
row as $\x_s$ and strictly to its right. Set
\[{\tt factor}_{K}({\mathfrak s})=1-\prod_{i=1}^{s} {\hat \beta}(\x_i)\prod_{j=1}^{t}
{\hat\beta}(\y_j).\]
We apply the convention that if any special label ${\mathfrak s}$ does not move during
the rectification of the column that it initially sits in, then
${\tt factor}_{K}({\mathfrak s})=0$.
Now set
\[{\tt wt}_{K}(T)=\prod_{{\mathfrak s}} {\tt factor}_{K}({\mathfrak s}),\]
where the product is over all special labels ${\mathfrak s}$.

\begin{Example}
Assume that in Example~\ref{exa:aKEqrect}, we are working in
$Gr(2,{\mathbb C}^5)$. There are three special labels:
\begin{itemize}
\item The edge label ``$1$'' in the second column gives
${\tt factor}(1)=1-\frac{t_3}{t_4}\cdot\frac{t_4}{t_5}=1-\frac{t_3}{t_5}$
since it passes through one box during the rectification of column $2$, and
ends in a row with a single labelled box $\ktableau{{2}}$ to its right.
\item The marked label ``$1\star$'' gives
${\tt factor}(1\star)=1-\frac{t_2}{t_3}\cdot \frac{t_3}{t_4}=1-\frac{t_2}{t_4}$
since it passes through one box and has one box $\ktableau{{2}}$ to its right
after rectifying column $1$.
\item The edge label ``$3$'' gives
${\tt factor}(3)=1-\frac{t_1}{t_2}\cdot\frac{t_2}{t_3}=1-\frac{t_1}{t_3}$
since it passes through one box and has one box $\ktableau{{4}}$ to its right
when the rectification of column $1$ is complete.
\end{itemize}
Hence
${\tt wt}_K(T)=\left(1-\frac{t_3}{t_5}\right)\left(1-\frac{t_2}{t_4}\right)
\left(1-\frac{t_1}{t_3}\right)$.\qed
\end{Example}

Lastly, given $T$ we define
\begin{eqnarray}
\label{eqn:sign}
{\tt sgn}(T)& = & (-1)^{\#\mbox{\small $\star$'s in $T$}
\  +  \ \#\mbox{\small labels in $T$}\  -  \ |\mu|}\\ \nonumber
& = & (-1)^{\#\mbox{\small $\star$'s in $T$}\  +  \ \#\mbox{\small edge labels in $T$}\
+ \ |\nu|-|\lambda|- |\mu|}
\end{eqnarray}

\begin{Example}
Continuing Example~\ref{exa:aKEqrect} we have
${\tt sgn}(T)=(-1)^{1+2+5-2-4}=(-1)^2=1$.
\end{Example}

\begin{Conjecture}
\label{conj:mainconj}
The equivariant $K$-theory Schubert structure coefficient is
\[K_{\lambda,\mu}^{\nu}=\sum_{T} {\tt sgn}(T) \cdot {\tt wt}_{K}(T)\]
where the sum is over all $T\in {\tt EqINC}(\nu/\lambda,|\mu|)$
such that ${\tt KErect}(T)=T_{\mu}$.
\end{Conjecture}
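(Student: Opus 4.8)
The plan for Conjecture~\ref{conj:mainconj} is to transplant the three-part argument behind Theorem~\ref{thm:main} into $K$-theory. One needs: (i) a \emph{flexible} $K$-equivariant jeu de taquin, call it ${\tt KEqjdt}$, order-independent on a suitable class of tableaux; (ii) a refined \emph{a priori} $K$-weight, incorporating the sign ${\tt sgn}$ of~(\ref{eqn:sign}), that is preserved by the flexible swaps; and (iii) a Pieri-type recurrence, with a base case, characterizing both $K_{\lambda,\mu}^\nu$ and the combinatorial sum. At the time of writing there is no proven equivariant $K$-theory Littlewood--Richardson rule to biject against --- the Knutson--Vakil puzzle conjecture reported in \cite{Coskun.Vakil} is itself open --- so some variant of this route seems necessary.

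Step (i) is where the new ideas must go. The flexible ${\tt KEqjdt}$ should fuse the {\tt switch} operation recalled in Section~4 (from \cite{Thomas.Yong:V}, where a \emph{nonequivariant} $K$-theoretic jeu de taquin was proved) with the expansion swap~(II) of Section~2. When the travelling $\bullet$ meets a southern edge label ${\mathfrak b}$ with ${\mathfrak b}\le{\mathfrak r}$, one should branch into three: a term that fills $\x$ with ${\mathfrak b}$ and emits the factor $1-{\hat\beta}(\x)$; a term that retains the $\bullet$ and promotes ${\mathfrak b}$ to the northern edge; and --- because box labels may repeat along a row in an increasing tableau --- a term that places ${\mathfrak b}$ in $\x$, keeps ${\mathfrak b}$ on the edge, and marks $\x$ with a $\star$; this last term is the $K$-theoretic correction that the $\star$-bookkeeping of ${\tt EqINC}$ was designed to absorb. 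The horizontal and resuscitation swaps (III), (IV) must be upgraded compatibly, allowing an interval of edge labels to be switched past and \emph{merged} with labels already present, subject to the increasing and $\star$ constraints. One then proves $K$-analogues of Proposition~\ref{prop:semistandard} and Theorem~\ref{thm:AB}: latticeness and the increasing-plus-$\star$ structure are preserved, ${\tt KEqjdt}$ terminates, and the ${\tt sgn}$-weighted coefficient of $S_\mu$ in ${\tt KEqrect}(T)$ equals ${\tt wt}_K(T)$ regardless of rectification order. The sign accounting here is the $K$-analogue of the cohomological fact that each expansion lowers $|\cdot|$ by one: every extra edge label and every $\star$ flips the sign, exactly as in~(\ref{eqn:sign}).

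For step (iii) one replaces Lemma~\ref{lemma:therecC} by the equivariant $K$-theory Pieri rule for $\sigma_{(1)}^K\cdot\sigma_\lambda^K$ (recoverable from the classical $K$-Pieri rule together with the localization $K_{\lambda,(1)}^\lambda$); associativity of $K_T(X)$ then yields a recurrence for $K_{\lambda,\mu}^\nu$ with base value at $\nu=\lambda$. One checks that $\sum_T {\tt sgn}(T)\,{\tt wt}_K(T)$ satisfies the same recurrence by a \emph{reverse}-${\tt KEqjdt}$ argument paralleling the Claims of Section~3: from a witness on $\nu^-/\lambda$ or on $\nu/\lambda$, together with a choice of box, one reconstructs a unique witness on $\nu/\lambda^+$, the new phenomenon being that the reverse of a merge can either split an edge label off again or erase a $\star$ --- and these two cases are exactly what generate the $\sigma_\lambda^K$ and $\sigma_{\lambda^-}^K$ contributions of the Pieri expansion. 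The base case $K_{\lambda,\mu}^\lambda$ should, through the same {\tt col}-reading bijection as in Section~3.2, match the set-valued tableau expansion of the appropriate factorial Grothendieck polynomial, equivalently the localization of $\sigma_\lambda^K$ at $e_\mu$.

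The hard part is step (i), and within it the weight-preservation analogue of Proposition~\ref{prop:weightpres}. Unlike in cohomology, a single flexible swap can simultaneously \emph{merge} two boxes and \emph{emit} a weight factor, so one must verify that precisely one swap type applies to each good and lattice tableau, that latticeness and the $\star$-discipline survive every swap, and that the $K$-refinements of ``too high'' and of ${\tt apfactor}$ in~(\ref{eqn:apfactorusual}) stay consistent with both resuscitation of edge labels and the $\star$-markings. This swap-by-swap verification is where I expect the bulk of the case analysis and the greatest risk of a subtle error. By comparison, once ${\tt KEqjdt}$ is established the recurrence follows Sections~3.1--3.3 closely enough that it should be comparatively routine, the one caveat being that the equivariant $K$-theory Pieri recurrence mixes terms of differing $|\nu|-|\lambda|$, so the induction organizing it must be set up with some care.
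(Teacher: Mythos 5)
The statement you are addressing is presented in the paper only as a conjecture: the authors offer no proof, just computer verification for all $Gr(k,\mathbb{C}^n)$ with $n\le 5$ and scattered larger cases, and they say explicitly in the final section that they are \emph{attempting} to prove it by finding an analogue of the results of Section~2. Your proposal is, in essence, a restatement of that same program --- flexible $K$-equivariant swaps generalizing (I)--(IV) together with {\tt switch}, a sign-refined a priori weight preserved swap-by-swap, and a $K$-theoretic Pieri recurrence with base case $\nu=\lambda$ --- so as a strategy it is well aligned with what the authors themselves envision.

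But it is not a proof, and the gap is total rather than local. None of the load-bearing steps is actually carried out: the flexible ${\tt KEqjdt}$ swaps are only gestured at (the three-way branching at an edge label, the merged horizontal swaps, the $\star$-discipline), and no analogue of Proposition~\ref{prop:semistandard} (well-definedness, termination, preservation of latticeness), of Theorem~\ref{thm:AB} (order-independence and the identification of the coefficient of $S_\mu$ with the weight), or of Proposition~\ref{prop:weightpres} (swap-by-swap weight preservation, which you correctly single out as the crux) is established. The sign bookkeeping in~(\ref{eqn:sign}) is asserted to ``work out exactly'' without any verification, and even the base case $K_{\lambda,\mu}^{\lambda}$ is only claimed to ``match'' a factorial Grothendieck expansion rather than proved to. In the cohomological case the paper needs roughly ten pages of delicate case analysis to push these steps through, and the $K$-theoretic setting is strictly harder (repeated labels in a row, $\star$-markings, Laurent binomial weights). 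A plan that defers all of that analysis cannot be accepted as a proof of the conjecture; at best it is a reasonable research outline whose feasibility is exactly what remains open.
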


\noindent
Conjecture~\ref{conj:mainconj} manifests the equivariant $K$-theory positivity
proved (for all generalized flag varieties $G/P$) by
\cite{Anderson.Griffeth.Miller}. Let
$z_{i}:=\frac{t_i}{t_{i+1}}-1.$
Note that for $j>i$,
\begin{equation}
\label{eqn:thetransform}
1-\frac{t_i}{t_j} = -(z_i+1)\cdots(z_{j-1}+1)+1.
\end{equation}
Thus
\[{\tt sgn}(T)\cdot {\tt wt}_K(T)=(-1)^{|\nu|-|\lambda|-|\mu|}\left((-1)^{\#\mbox{\small $\star$'s in $T$}\  +  \ \#\mbox{\small edge labels in $T$}}\cdot {\tt wt}_K(T)\right).\]
Notice ${\tt wt}_{K}(T)$ is a product of
($\#\mbox{\small $\star$'s in $T$} + \#\mbox{\small edge labels in $T$}$)-many factors of the form (\ref{eqn:thetransform}) and also
$(z_i+1)\cdots(z_{j-1}+1)-1$ is manifestly positive in the variables $\{z_i\}$.
Hence Conjecture~\ref{conj:mainconj} expresses $K_{\lambda,\mu}^{\nu}\cdot (-1)^{|\nu|-|\lambda|-|\mu|}$ as a manifestly positive polynomial in the
variables $\{z_i\}$; this is the positivity of \cite{Anderson.Griffeth.Miller}, after the substitution $z_i\mapsto e^{\beta_i}-1$.


We have computer verified this conjecture for all
$Gr(k,{\mathbb C}^n)$ for $n\leq 5$ as well as a number of cases
for larger $n$.

\section{Final remarks}

We are attempting to extend ideas in this paper to prove
Conjecture~\ref{conj:mainconj}. Specifically, we desire an analogue of the results of Section~2.
This would specialize to a ``semistandard'' version of the results of \cite{Thomas.Yong:V}.

 One can reformulate Theorem~\ref{thm:main} to avoid edge labels. In this version,
a bullet $\bullet$ in a box can either be replaced by a label using a classical \emph{jeu de taquin} slide or it can be replaced by a label not already present in the tableau, at
the cost of the weight associated with the box containing the $\bullet$.
\excise{Moreover,
unlike earlier rules for $C_{\lambda,\mu}^{\nu}$ (including those proved in this paper),
this rule assigns a single tableau to each monomial in the expansion of the polynomial
$C_{\lambda,\mu}^{\nu}$ into the variables $\alpha_i=t_i - t_{i+1}$. Therefore this raises the question of a finding a weight-preserving bijection between the the
set of tableau that witness $C_{\lambda,\mu}^{\nu}$ and those that witness $C_{\mu,\lambda}^{\nu}$, thus providing a direct combinatorial proof of the ``commutation symmetry'' of these coefficients. (For example, the original puzzle rule of \cite{Knutson.Tao} assigns a different number of puzzles to $C_{\lambda,\mu}^\nu$
versus $C_{\mu,\lambda}^{\nu}$. The same is true of, e.g., Theorem~\ref{thm:main}.)}

We mentioned that the equivariant cohomology of Grassmannians is controlled by multiplication of factorial
Schur polynomials. A.~Molev-B.~Sagan \cite{Molev.Sagan} introduced a generalization of this (geometrically relevant to ``triple Schubert calculus'', see \cite{Knutson.Tao}). The ideas of Section~2 also generalize to provide a \emph{jeu de taquin} rule for the
Molev-Sagan coefficients.

\section*{Acknowledgments}
We thank Allen Knutson and Mark Shimozono for helpful conversations.
HT was supported by an NSERC Discovery Grant.
AY was supported by NSF grants and by a CAS/Beckman fellowship at UIUC's Center for Advanced Study.  This project was partially completed
at the Fields Institute, and at the 2010 workshop on localization techniques in equivariant cohomology at the American Institute for Mathematics.



\end{document}